\documentclass[11pt]{article}
\usepackage{hyphenat}
\usepackage{setspace}
\usepackage[a4paper,margin=1in]{geometry}

\usepackage{amsmath, amssymb, amsthm, mathtools}

\usepackage{bm}

\usepackage{microtype}

\usepackage{hyperref}

\usepackage{xcolor}

\usepackage{mathrsfs}

\usepackage{comment}

\newcommand{\defeq}{\mathrel{\mathop:}=}

\renewenvironment{abstract}{%
	\centering\small
	\textbf\abstractname
	\list{}{\leftmargin0.2cm \rightmargin\leftmargin}
	\item\relax
}{%
	\endlist \par\bigskip
}
\advance\oddsidemargin-\textwidth
\evensidemargin=\oddsidemargin

\newtheorem{theorem}{Theorem}[section]

\newtheorem{scholium}[]{Scholium}

\newtheorem{proposition}[theorem]{Proposition}

\newtheorem{corollary}[theorem]{Corollary}

\theoremstyle{definition}

\newtheorem{definition}[theorem]{Definition}

\newtheorem{example}[]{Example}

\theoremstyle{remark}

\numberwithin{equation}{section}

\newcommand{\intav}[1]{\mathchoice {\mathop{\vrule width 6pt height 3 pt depth  -2.5pt
\kern -8pt \intop}\nolimits_{\kern -6pt#1}} {\mathop{\vrule width
5pt height 3  pt depth -2.6pt \kern -6pt \intop}\nolimits_{#1}}
{\mathop{\vrule width 5pt height 3 pt depth -2.6pt \kern -6pt
\intop}\nolimits_{#1}} {\mathop{\vrule width 5pt height 3 pt depth
-2.6pt \kern -6pt \intop}\nolimits_{#1}}}
\title{%Improved estimates at degenerate nodal sets to fully nonlinear two-phase models with gradient nonlinearities: A scaling approach
A fully nonlinear two-phase Alt-Phillips problem with gradient nonlinearities}
\author{Junior da Silva Bessa, João Vitor da Silva, Yuwei Hu
$\&$
Mayra Soares}
\date{\today}

\begin{document}

\maketitle

\begin{abstract}
\noindent In this manuscript, we analyze a two-phase free boundary problem of Alt-Phillips-type driven by fully nonlinear elliptic equations with lower-order ingredients. More precisely, we consider equations of the type
\[
 F(x,D^{2}u)+\mathscr{H}(x,Du)=\mathscr{F}(x,u^{+},u^{-}) \quad \text{in } B_1 \subset \mathbb{R}^{n},
\]
where the Hamiltonian term exhibits mixed linear and nonlinear gradient dependence, namely
\[
\mathscr{H}(x,\xi):= \langle \mathcal{B}(x),\xi\rangle+\varrho(x)|\xi|^{\sigma}, \quad 0< \sigma\leq2, \,\,\, \sigma \neq 1,
\]
and the reaction is governed by a two-phase power-type nonlinearity of semilinear-type
\[
\mathscr{F}(x,u^{+},u^{-}):=\mathfrak{g}(x)\big[(u^{+})^{m}-(u^{-})^{m}\big] \quad \text{for} \quad  0<m<1.
\]
Under suitable structural assumptions on the coefficients and the operator $F$, we develop a robust analytical framework to capture the fine properties of solutions near the free boundary. In particular, we establish improved growth estimates at higher-order singular nodal points, points where both phases meet and the solution exhibits critical degeneracy. The results are based upon a fine blow-up argument, stability and a Liouville-type theorem. We further prove quantitative non-degeneracy results, ensuring that solutions detach from zero at a controlled rate on each phase. As a consequence of our analysis, we derive a Liouville-type theorem for global solutions within this class. Our results extend and unify several previously known scenarios, and remain new even in the presence of gradient-dependent nonlinearities of order ($\sigma \neq 1$), thereby encompassing models with nonlinear drift and absorption effects arising in phase transition and reaction-diffusion phenomena.

\end{abstract}

\medskip
\noindent \textbf{Keywords}: Regularity estimates, Fully nonlinear elliptic models, Two-phase problems.
\vspace{0.2cm}
	
\noindent \textbf{AMS Subject Classification: Primary 35B65, 35J60, 35R35; Secondary 35D40}

\section{Introduction}

%Schauder theory made \emph{a priori } estimates a central tool in elliptic PDEs, providing quantitative bounds on solutions without requiring explicit formulas. These estimates play a key role in both linear and nonlinear equations; see, e.g., \cite{Kichenassamy2006}. Over the years, several approaches to Schauder estimates have been developed, including potential theory, Caccioppoli-type energy estimates, maximum principles, Simon's blow-up arguments, mollification techniques, and perturbation methods; see \cite{FR-O} for a detailed overview.
Schauder estimates are a fundamental tool in elliptic regularity, providing quantitative control of higher derivatives in terms of the solution and the regularity of the equation's data. For linear uniformly elliptic equations in non-divergence form, i.e.,
\[\operatorname{Tr}\big(\mathfrak{A}(x)D^2u\big)=f(x)\quad\text{in }B_1, \quad \text{with} \quad \mathfrak{A}=(a_{ij}) \quad \text{such that} \quad \lambda|\xi|^2\leq\sum_{i,j=1}^n a_{ij}(x)\xi_i\xi_j\leq\Lambda|\xi|^2,\]
classical Schauder theory yields \(\mathcal{C}^{2,\alpha}\) estimates under Hölder continuity of the coefficients and right-hand side, 
\[
\|u\|_{\mathcal{C}^{2,\alpha}(B_{1/2})}
\leq
\mathrm{C}\left(
\|u\|_{L^\infty(B_1)}
+
\|f\|_{\mathcal{C}^{0,\alpha}(B_1)}
\right),
\]
where \(\mathrm{C}>0\) depends only on \(n,\alpha,\lambda,\Lambda\), and the \(\mathcal{C}^{0,\alpha}\)-norms of the coefficients. This estimate serves as a prototype for higher regularity theory in more general elliptic equations. Analogous estimates for fully nonlinear equations require additional structural assumptions on the operator, such as convexity, concavity, or suitable quantitative regularity conditions; see, for instance, \cite{BW21, CC95,dosPrazTei2016} and \cite{Gof24}.

Turning now to the semilinear setting, a key motivation arises from the two-phase Alt–Phillips problem, see \cite{Fot-Shah2017}. More precisely, let $\Omega \subset \mathbb{R}^n$ be a bounded domain with Lipschitz boundary, and consider, for instance, the equation 
\begin{equation}\label{TP-Alt-Phillips}
\Delta u = \gamma\lambda_+u^{\gamma-1}\chi_{\{u>0\}} - \gamma\lambda_-(-u)^{\gamma-1}\chi_{\{u<0\}} \quad \mbox{in } \Omega, \qquad 1<\gamma<2 \quad \text{and} \quad \lambda_{\pm}>0.\tag{A-P}
\end{equation}
Its solutions are obtained as local minimizers of a suitable Alt–Phillips functional. In this context, minimizers may change sign, giving rise to free boundaries between positive, negative, and zero phases. We stress that the one-phase case, namely $u^{-}=0$, corresponds to the seminal work of Alt and Phillips \cite{Alt-Phillips}, where a free boundary problem arising from the study of gas distribution in a porous catalyst pellet was introduced. Although such variational models allow for the use of classical potential-theoretic techniques, sign-changing solutions to \eqref{TP-Alt-Phillips} pose substantial technical difficulties, since standard tools available in the one-phase setting, such as Harnack's inequality, Hopf and boundary point lemmas, and the Strong Maximum Principle, generally fail.

Now, considering associated with \eqref{TP-Alt-Phillips} the scaling
\[
u_r(x):=\frac{u(x_0+rx)}{r^\beta}, 
\]
we may {determine} the natural homogeneity of the equation and observe that the scaling is consistent with the assumption
\(D^2u(x_0)=0\). In fact,
$$\Delta u_r(x) = r^{2-\beta}\Delta u(x_0+rx), \quad \text{while} \quad \vert{}u(x_0+rx)\vert{}^{\gamma-1} = r^{\beta(\gamma-1)}\vert{}u_r(x)\vert{}^{\gamma-1}.$$
The scale invariance requires
{$2-\beta+\beta(\gamma-1)=0,$}
and hence
$
\beta=\frac{2}{2-\gamma}.
$
Since \(1<\gamma<2\), we have
$\beta>2.
$ Thus, if \(x_0\) is a \emph{higher-order degenerate and vanishing point} of u, namely,
$
u(x_0)=
|D u(x_0)| =
|D^2u(x_0)|=0,
$
the natural growth predicted by the scaling of the equation is
$$\vert{}u(x)\vert{} \approx \vert{}x-x_0\vert{}^{\frac{2}{2-\gamma}}, \quad \vert{}D u(x)\vert{} \approx \vert{}x-x_0\vert{}^{\frac{\gamma}{2-\gamma}}, \quad \text{and} \quad \vert{}D^2u(x)\vert{} \approx \vert{}x-x_0\vert{}^{\frac{2(\gamma-1)}{2-\gamma}}.$$
Since \(\gamma>1\), it follows that
$\frac{2(\gamma-1)}{2-\gamma}>0,$
and consequently,
$
D^2u(x)\to 0$, as $x\to x_0.
$

Therefore, the scaling itself doesn't establish \(D^2u(x_0)=0\). Rather, once we are at a point where \(u=Du=D^2u=0\), the scaling identifies the natural next-order homogeneity. This suggests the higher-order expansion, namely,
$
u(x)
=
\text{o}(|x-x_0|^2)
\ \text{as }x\to x_0.
$

Comparing these results with the corresponding estimates from Schauder theory, the right-hand side can be written schematically as
$
f(x)\sim |u(x)|^{\gamma-1}.
$
Under the natural growth
\[
|u(x)|\leq |x-x_0|^\beta,
\quad \mbox{
we obtain } 
\quad
|f(x)| \sim
|x-x_0|^{\frac{2(\gamma-1)}{2-\gamma}}.
\]
Thus, the natural H\"{o}lder exponent of the source term at $x_0$ is given by
$
\alpha_\gamma
=
\frac{2(\gamma-1)}{2-\gamma}.
$
Consequently, whenever $0 < \alpha_\gamma < 1$, classical Schauder theory suggests that $u \in \mathcal{C}^{2,\alpha_\gamma}$ near $x_0$, with the pointwise estimate 
\[
|D^2u(x)|
\le
|x-x_0|^{\alpha_\gamma},
\qquad
\alpha_\gamma
=
\frac{2(\gamma-1)}{2-\gamma},
\]
since \(D^2u(x_0)=0\).
Notice that
$ \beta-2
=
\frac{2}{2-\gamma}-2
=
\frac{2(\gamma-1)}{2-\gamma}
=
\alpha_\gamma,
$
which yields
$\beta=2+\alpha_\gamma.
$
This identity provides a direct structural interpretation of the scaling exponent 
\[
|u(x)|\le |x-x_0|^{2+\alpha_\gamma}
\quad\Longleftrightarrow\quad
|D^2u(x)|\le |x-x_0|^{\alpha_\gamma}.
\]

Thus, at higher-order degenerate and vanishing points, the exponent $\beta = \frac{2}{2-\gamma}$ represents the \emph{intrinsic scaling exponent of the equation}.   This scaling is the free-boundary analogue of the compatibility between the vanishing order of the source term and the \(\mathcal{C}^{2,\alpha}\) scale in Schauder theory. Observe also that, $\beta-2 = \frac{2(\gamma-1)}{2-\gamma},$ corresponds precisely to the Hessian's Hölder exponent predicted by Schauder theory.

In the same spirit, the fully nonlinear analogue of the one-phase Alt–Phillips problem,
$$
F(D^2u) = u^{\gamma-1} \quad \text{in} \quad B_1, \,\,\,\text{for } \gamma \in (1,2) \quad \text{ and } \quad u \ge 0, 
$$ was analyzed by Hu and Wu \cite{Wu-Yu2022}. Under the assumption that the convex operator $F: \operatorname{Sym}(n) \to \mathbb{R}$ satisfies $F(\mathbf{O}_n) = 0$ with its subdifferential at $\mathbf{O}_n$ given by a trace operator, they proved optimal regularity to solutions, namely, $u \in \mathcal{C}_{\text{loc}}^{\frac{2}{2-\gamma}}$, as well as $\mathcal{C}^1$ regularity for the regular part of the free boundary. Motivated by their work, we introduce our problem below.

\subsection{A two-phase fully nonlinear Alt-Phillips model}

Inspired by the previous heuristics, we study a \emph{two-phase Alt-Phillips free boundary problem} governed by fully nonlinear elliptic PDEs with Hamiltonian terms and sublinear (or concave) reaction terms. These models serve as non-variational counterparts to semilinear variational free boundary problems across diverse physical and mathematical contexts (cf. \cite{Fot-Shah2017}).
Precisely, let us consider the following PDE model
\begin{equation}\label{1.1}
 F(x,D^{2}u)+\mathscr{H}(x,Du)=\mathscr{F}(x,u^{+},u^{-}),
\end{equation}
where
\begin{equation}\label{1.2}
\mathscr{H}(x,{\xi}):= \langle \mathcal{B}(x),{\xi}\rangle+\varrho(x)|{\xi}|^{\sigma}, \quad \mbox{for all } x\in B_1, \ \xi \in \mathbb{R}^n
\end{equation}
with $\sigma \in (0, 2] \setminus \{1\}$ and
\begin{equation}\label{Assump-RHS}
\mathscr{F}(x,u^{+},u^{-}):=\mathfrak{g}(x)((u^{+})^{m}-(u^{-})^{m}), \quad \text{for} \quad  0<m<1.
\end{equation}

Additionally, we will assume the following structural assumptions:

\begin{itemize}

\item[{\bf ($\mathrm{A}_1$)}] ({\bf Uniform ellipticity})
Let
$F:\mathrm{Sym}(n)\times  B_1 \to \mathbb{R}
$
be a continuous, fully nonlinear operator. We assume that there exist constants
\(0<\lambda\leq \Lambda\) such that
\begin{equation*}\label{UnifEll}
\mathcal{M}^{-}_{\lambda,\Lambda}(\mathrm{Y})
\leq
F(\mathrm{X}+\mathrm{Y},x)-F(\mathrm{X},x)
\leq
\mathcal{M}^{+}_{\lambda,\Lambda}(\mathrm{Y}),
\end{equation*}
for every \(x\in B_1\) and every
\({\rm X,Y}\in \mathrm{Sym}(n)\) with \(\rm Y\geq 0\).
Here, \(\mathcal{M}^{\pm}_{\lambda,\Lambda}\) denote the Pucci extremal operators
\[
\mathcal{M}^{+}_{\lambda,\Lambda}(\rm X)
=
\Lambda \sum_{e_i>0} e_i
+
\lambda \sum_{e_i<0} e_i,
\mbox{
and \ }
\mathcal{M}^{-}_{\lambda,\Lambda}(X)
=
\lambda \sum_{e_i>0} e_i
+
\Lambda \sum_{e_i<0} e_i,
\]
where \(e_i = e_i(\mathrm{X})\) are the eigenvalues of \(\mathrm{X}\). In addition,
$F({ \bf O_n},x)=0
\ \text{for every } x\in B_1.
$

\item[{\bf ($\mathrm{A}_2$)}] ({\bf Regularity of the lower-order coefficients})
The coefficients satisfy
\[
\mathcal{B}\in \mathcal{C}^{0,\alpha_0}( B_1;\mathbb{R}^{n})
\cap L^{\infty}( B_1;\mathbb{R}^{n}),
\varrho\in \mathcal{C}^{0,\alpha_0}( B_1)\cap L^{\infty}( B_1),
\mbox{
and
}
\mathfrak g\in \mathcal{C}^{0}( B_1)\cap L^{\infty}( B_1)
\] for some exponent $\alpha_0 \in (0, 1)$. Setting $\mathfrak{F}: = \mathfrak{g}^{-1}(0)$, it is a close subset of $B_1$, and 
there exist  constants $\mathrm{c}_0>0,$ and 
$ \mathrm{C}_0>0$ and $\mu\in \mathbb{R}$, satisfying $0\leq\mu\leq 1 -3m $,  such that
\[ 
\mathrm{c_0}\operatorname{dist}(x,\mathfrak{F})^\mu 
\leq \mathfrak{g}(x) \leq \mathrm{C_0}\operatorname{dist}(x,\mathfrak{F})^\mu \ \mbox{ in }  B_1.
\]

\item[{\bf ($\mathrm{A}_3$)}] ({\bf Continuity of the operator})
There exists a modulus of continuity \(\tau\) such that
\begin{equation*}\label{elliptic-continuity}
\frac{|F(\rm X,x)-F(\rm X,y)|}
{\|\mathrm{X}\|}
\leq
\tau(|x-y|),
\end{equation*}
for every \(\rm X\in \mathrm{Sym}(n)\), and every
\(x,y\in B_1\).
Equivalently, defining
\[
\Phi_{\mathrm{F}}(x,y)
:=\sup_{\mathrm{X} \in \text{Sym}(n) \atop{\mathrm{X} \neq \mathbf{O}_n}} \frac{|F(\mathrm{X}, x)-F(\mathrm{X}, 0)|}{\|\mathrm{X}\|},
\]
we have $\Phi_{\mathrm{F}}(x,y)
\leq
\tau(|x-y|).
$

\item[{\bf ($\mathrm{A}_4$)}] ({\bf Differentiability of the operator})
For every \(x\in B_1\), the map
$
F(\cdot,x):\mathrm{Sym}(n)\to\mathbb{R}
$ is of class \(\mathcal{C}^{1}\), and there exists a modulus of continuity \( \omega: [0, +\infty) \to [0, +\infty)\) such that
\[
|D_{\rm X}F({\rm Z},x)-D_{\rm X}F({\rm Y},x)|
\leq
 \omega(|\rm Z-Y|),
\]
for every \(x\in B_1\) and every
\(\rm Z,Y\in\mathrm{Sym}(n)\).
Here,
\[
D_{\rm X}F({\rm X_{0}},x)
:=
\lim_{s\to 0}
\frac{
F({\rm X_{0}}+s{\rm X},x)-F({\rm X_{0}},x)
}{s} = \text{Tr}(\mathfrak{A}(x){\rm X_{0}})
\]
denotes the G$\hat{a}$teaux derivative of \(F\) with respect to the matrix variable.

\end{itemize}

Despite recent advances in variational and non-variational settings, optimal regularity remains open for two-phase problems driven by quasilinear operators in non-divergence form with drift terms and doubly homogeneous reaction terms as $\mathscr{F}(u^+,u^-,x)$, considered here. The non-variational, nonlinear character of this problem poses significant analytical obstacles.

The purpose of this work is to overcome such obstacles to show that the scaling prediction can indeed be recovered as an a priori estimate. More precisely, under suitable structural assumptions on the fully nonlinear operator and the lower-order terms, we prove an improved growth estimate at points where the solution and its first two derivatives vanish. The proof combines blow-up techniques, compactness, higher-order regularity estimates, and a Liouville-type classification for the limiting equation.

{In summary, our main contributions can be outlined as follows:}

\begin{itemize}
\item[(i)] {\textbf{Improved growth estimate at higher-order singular nodal points} -- Theorem~\ref{Thm:branching-elliptic}. This result is new in the nondivergence setting and remains novel even in the linear case. It is also closely related to classical membrane problems, as well as to dead-core phenomena;}

\item[(ii)] {\textbf{Sharp gradient growth } -- Corollary~\ref{Cor:SharpGradientGrowth}. This result relies crucially on the gradient estimates established previously by the authors; see~\cite[Theorem 1.6]{daSN2021};}

\item[(iii)]{\textbf{$L^{p}$-average estimates of the Hessian} -- Theorem~\ref{Lpaverage}. An improved estimate for the second derivatives near higher-order singular nodal points in the $L^p$-average sense, building crucially on the second author's previous work \cite[Theorem~6.7]{daSN2021};}

\item[(iv)] {{\bf Boundary estimate at higher-order singular nodal points} -- Theorem~\ref{Thm:boundary}. This result provides the boundary counterpart of Theorem~\ref{Thm:branching-elliptic}, showing that a prescribed fine estimate on the boundary propagates into the interior for this class of two-phase models. The proof combines a blow-up argument, compactness, and a Liouville theorem for solutions in a half-space;}

\item[(v)] {\textbf{Flipping estimates } -- Theorem~\ref{ThmFlipElliptic}. Consists of obtaining estimates for one component (either the positive or negative
part) of a solution, provided suitable bounds are available for the complementary
component;}

\item[(vi)] {\textbf{Non-degeneracy of solutions} -- Theorem~\ref{Non-degeneracy of solutions}. The proof relies on a comparison principle established previously by the authors; see~\cite[Theorem 1.7]{BBdaSSS25}.}

\item[(vii)] {\textbf{Further consequences}: We also provide applications that illustrate the scope of our results, including a Liouville-type theorem -- Theorem~\ref{thm:liouville}.}

\end{itemize}

\subsection{Our main results and consequences}

In this subsection, we present our main contributions. 
Recall that the nodal set of $u$ is defined by $\mathcal{N}(u) := \{u = 0\}$. In classical nodal theory, it decomposes into the regular and singular nodal sets $$\mathcal{R}(u) := \mathcal{N}(u) \cap \{\vert{}Du\vert{} \neq 0\} \quad \text{and} \quad \mathcal{S}(u) := \mathcal{N}(u) \cap \{\vert{}Du\vert{} = 0\}.$$ Within this framework, our set of interest,$$\Gamma_2(u) := \{\vert{}Du\vert{} = \vert{}D^2u\vert{} = 0\} \cap \partial\{u > 0\} \cap \partial\{u < 0\},$$ is a subset of $\mathcal{S}(u)$. More precisely, it corresponds to the \emph{higher-order singular nodal set}, as both first- and second-order derivatives vanish at its points, a terminology consistent with classical nodal theory and recent a priori estimates for degenerate equations.

In addition, we consider the class of solutions to \eqref{1.1}:
\begin{equation*}
\mathcal{G}_{2}^{\alpha_0} := \left\{
u \in \mathcal{C}^{0}(B_1) \; \mbox{solving   (\ref{1.1}) \ } : \ \|u\|_{\mathcal{C}^{2,\alpha_0}(B_{1/2})}\leq \mathrm{K}_0(\mathrm{M}_0,\text{data})
, \ \alpha_0 \in (0,1)\right\},
\end{equation*}
where the constant $\mathrm{M}_{0}>0$ satisfies $\max\{\|u\|_{L^{\infty}(B_{1})},\|\mathcal{B}\|_{\mathcal{C}^{0,\alpha_{0}}(B_{1};\mathbb{R}^{n})},\|\varrho\|_{\mathcal{C}^{0,\alpha_{0}}(B_{1})}\}\leq \mathrm{M}_{0}$. Under structural conditions ($\mathrm{A}_1$)–($\mathrm{A}_4$), $\mathcal{G}_{2}^{\alpha_0} \neq \emptyset$ by \cite{BdaDO2026, daSN2021}.
We are now ready to state our first main result.

\begin{theorem}[\bf Improved growth estimate at higher-order singular nodal points]
\label{Thm:branching-elliptic}
Let $u\in \mathcal{G}_{2}^{\alpha_0}$ be a solution of \eqref{1.1} in $B_1$.  % with $\mathfrak{g}(x):=|x|^\mu$. 
Fix $ \beta\in \left(2+\alpha_0,\frac{2+\mu}{1-m}\right]$ and $\sigma\in\left[\tfrac{\beta-2}{\beta-1},1\right)\cup(1,2]$. Assume that the structural hypotheses $(\mathrm{A}_1)-(\mathrm{A}_4)$ are satisfied. Given $x_0\in\Gamma_2(u)\cap \mathfrak{g}^{-1}(0)\cap B_{1/2}$, there exists a constant $\mathrm{C}>0$ depending only on
$n$, $\lambda$, $\Lambda$, $m$, $\sigma$,
$\|\mathcal{B}\|_{\mathcal{C}^{0,\alpha_{0}}(B_1;\mathbb{R}^{n})}$,
$\|\varrho\|_{\mathcal{C}^{0,\alpha_{0}}(B_1)}$, and 
$\|\mathfrak g\|_{L^\infty(B_1)}$,
 such that
\[
|u(x)|
\le \mathrm{C}\|u\|_{L^\infty(B_1)}|x-x_{0}|^{\beta},
\quad  \forall \ x\in B_{r}(x_{0}),\ \forall \ r\in (0,1/2).
\]
Consequently,
\[
\sup_{B_{r}(x_{0})}|u|\leq \mathrm{C}\|u\|_{L^{\infty}(B_{1})}r^{\beta},\quad \forall \ r\in (0,1/2).
\]
\end{theorem}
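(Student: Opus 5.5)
We argue by contradiction through a blow-up and compactness scheme, in the spirit of Caffarelli's iterative argument. The constant must depend only on the listed data, so after normalizing $\|u\|_{L^\infty(B_1)}=1$ (dividing $u$ by its sup norm) it suffices to prove a discrete statement. We claim that there is $\mathrm{C}$ such that, for every $u\in\mathcal{G}_2^{\alpha_0}$ with the normalized data and every $x_0\in\Gamma_2(u)\cap\mathfrak{g}^{-1}(0)\cap B_{1/2}$,
\[
\mathcal{S}_j(u,x_0):=\sup_{B_{2^{-j}}(x_0)}|u|\le \mathrm{C}\,2^{-j\beta}\qquad\text{for all } j\in\mathbb{N}.
\]
The pointwise estimate for $x\in B_r(x_0)$ then follows by choosing $j$ with $2^{-j-1}<|x-x_0|\le 2^{-j}$. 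The normalization is not exactly scale-free, because the reaction term is not homogeneous of degree one. We handle this by noting that dividing by $\|u\|_\infty\le \mathrm{M}_0$ changes $\mathfrak{g}$ by the factor $\|u\|_\infty^{m-1}$ and $\varrho$ by $\|u\|_\infty^{\sigma-1}$. Alternatively, we keep the dependence on $\mathrm{M}_0$ inside the data.

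\textbf{Dyadic reduction.} Following the standard argument, we first reduce to showing
\[
\mathcal{S}_{j+1}\le \max\{\mathrm{C}2^{-(j+1)\beta},\;2^{-\beta}\mathcal{S}_j,\;2^{-2\beta}\mathcal{S}_{j-1},\dots\}.
\]
In practice we prove: if the bound fails, there are sequences $u_k$, $x_k$, $j_k$ with
\[
\mathcal{S}_{j_k+1}(u_k)\ge k\,2^{-(j_k+1)\beta}
\quad\text{and}\quad
\mathcal{S}_{j_k+1}\ge 2^{-\beta i}\mathcal{S}_{j_k+1-i}\ \text{ for all } i\le j_k .
\]
Since $\|u_k\|_\infty\le1$ forces $j_k\to\infty$, we set $r_k=2^{-j_k}$ and define the rescalings
\[
v_k(y):=\frac{u_k(x_k+r_ky)}{\mathcal{S}_{j_k+1}(u_k)} .
\]
These satisfy $\sup_{B_{1/2}}|v_k|=1$, the polynomial growth $\sup_{B_{2^i}}|v_k|\le 2^{\beta(i+1)}$, and $v_k(0)=|Dv_k(0)|=|D^2v_k(0)|=0$. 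Each $v_k$ solves
\[
F_k(y,D^2v_k)+\mathscr{H}_k(y,Dv_k)=\mathscr{F}_k(y,v_k^+,v_k^-),
\qquad F_k(y,\mathrm{X}):=\frac{r_k^2}{\mathcal{S}_{j_k+1}}F\Big(x_k+r_ky,\frac{\mathcal{S}_{j_k+1}}{r_k^2}\mathrm{X}\Big),
\]
with the following rescaled lower-order coefficients:
\begin{itemize}
\item drift $r_k\mathcal{B}$, which tends to $0$;
\item gradient coefficient $\varrho\, r_k^{2-\sigma}\mathcal{S}_{j_k+1}^{\sigma-1}$;
\item reaction coefficient $\mathfrak{g}(x_k+r_ky)\,r_k^2\,\mathcal{S}_{j_k+1}^{m-1}$.
\end{itemize}

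\textbf{Vanishing of the lower-order terms.} Since $\mathfrak{g}(x_k)=0$, assumption ($\mathrm{A}_2$) gives $\mathfrak{g}(x_k+r_ky)\le \mathrm{C}_0(r_k|y|)^\mu$. The reaction coefficient is therefore at most
\[
\mathrm{C}\,r_k^{2+\mu}\,\mathcal{S}_{j_k+1}^{m-1}\le \mathrm{C}\,r_k^{2+\mu-\beta(1-m)}\,k^{m-1}\to0,
\]
because $\beta\le\frac{2+\mu}{1-m}$. For the gradient term, when $\sigma>1$ we use $\mathcal{S}_{j_k+1}\le 1$ together with $r_k\to0$. When $\sigma<1$ we need
\[
r_k^{2-\sigma}\mathcal{S}^{\sigma-1}\le r_k^{2-\sigma-\beta(1-\sigma)}k^{\sigma-1}\to0,
\]
which requires $2-\sigma\ge\beta(1-\sigma)$, i.e.\ $\sigma\ge\frac{\beta-2}{\beta-1}$, exactly the hypothesis. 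Since $\mathcal{S}_{j_k+1}\ge k r_k^{\beta}\gg r_k^2$, the operators $F_k$ are rescalings with large matrix dilation. Uniform ellipticity is preserved, and by ($\mathrm{A}_3$)–($\mathrm{A}_4$) the $F_k$ converge, up to subsequences, to a linear constant-coefficient operator $\mathrm{Tr}(\mathfrak{A}_\infty \mathrm{X})$ (the derivative at the origin, or a recession-type limit). Making this identification precise is where ($\mathrm{A}_4$) enters. Uniform $\mathcal{C}^{1,\alpha}$ estimates from Krylov–Safonov and Caffarelli theory, combined with the class bound and compactness, give locally uniform convergence $v_k\to v_\infty$ in $\mathcal{C}^{2}_{\rm loc}$ on compacts. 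Here we use the $\mathcal{C}^{2,\alpha_0}$ regularity available for the rescaled equations, since the limiting operator is smooth. The limit satisfies $\mathrm{Tr}(\mathfrak{A}_\infty D^2v_\infty)=0$ in $\mathbb{R}^n$, $|v_\infty(y)|\le \mathrm{C}(1+|y|)^\beta$, $\sup_{B_{1/2}}|v_\infty|=1$, and $v_\infty(0)=Dv_\infty(0)=D^2v_\infty(0)=0$.

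\textbf{Liouville step and main obstacle.} By the Liouville theorem for harmonic functions of polynomial growth (after a linear change of variables), $v_\infty$ is a harmonic polynomial of degree at most $\lfloor\beta\rfloor$. The main obstacle is that vanishing to second order at $0$ does not by itself kill a cubic polynomial when $\beta\ge3$. Growth $\le|y|^\beta$ with $\beta>2+\alpha_0$ forces $v_\infty\equiv 0$ only if we know the precise vanishing order, i.e.\ that $v_\infty$ is $O(|y|^\beta)$ near $0$ as well. To close the argument we therefore use the two-sided doubling inequality $\mathcal{S}_{j_k+1}\ge 2^{-\beta i}\mathcal{S}_{j_k+1-i}$ also at small scales. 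This means choosing $j_k$ as a maximizer of $2^{j\beta}\mathcal{S}_j$ over $j$, which is finite since $u\in\mathcal{C}^{2,\alpha_0}$ gives $\mathcal{S}_j\lesssim 2^{-j(2+\alpha_0)}$. The choice yields $|v_\infty(y)|\le \mathrm{C}|y|^\beta$ for $|y|\le1$ as well. A polynomial that is homogeneous-bounded by $|y|^\beta$ both at $0$ and at $\infty$ must be homogeneous of degree $\beta$, which is impossible unless $\beta\in\mathbb{N}$; in that case we invoke an extra orthogonality normalization, subtracting the degree-$\beta$ harmonic part. Either way we obtain $v_\infty\equiv0$, contradicting $\sup_{B_{1/2}}|v_\infty|=1$. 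This step, together with the passage to the linear limit operator, is where we expect the real difficulty.
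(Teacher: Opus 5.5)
There is a genuine gap, and it sits exactly where your contradiction has to come from: the second-order vanishing $D^2v_\infty(0)=0$.

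With the normalization $v_k=u_k(x_k+r_k\,\cdot)/\mathcal{S}_{j_k+1}$, the class bound only gives $[D^2v_k]_{\alpha_0}\lesssim \mathrm{K}_0\,r_k^{2+\alpha_0}/\mathcal{S}_{j_k+1}$. Your contradiction hypothesis ($\mathcal{S}_{j_k+1}\gtrsim k r_k^\beta$) does not bound this ratio uniformly. Meanwhile $(\mathrm{A}_1)$--$(\mathrm{A}_4)$ give no uniform $\mathcal{C}^{2,\alpha}$ theory for the rescaled operators $F_k$: there is no convexity, and only a modulus $\tau$ in $x$. Smoothness of the limiting operator does not transfer estimates back to the sequence.

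So you only have $\mathcal{C}^{1,\alpha}$ compactness. Everything you can actually justify is then consistent with a limit $v_\infty(y)=\langle My,y\rangle$ with $\operatorname{Tr}(\mathfrak{A}_\infty M)=0$, normalized on $B_{1/2}$. Such a limit satisfies your growth and doubling bounds, and there is no contradiction.

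Your fallback, choosing $j_k$ to maximize $2^{j\beta}\mathcal{S}_j$, rests on a false claim. The bound $\mathcal{S}_j\lesssim 2^{-j(2+\alpha_0)}$ only gives $2^{j\beta}\mathcal{S}_j\lesssim 2^{j(\beta-2-\alpha_0)}$, which is unbounded since $\beta>2+\alpha_0$. Finiteness of that supremum is essentially the qualitative content of the theorem. Truncating the range of $j$ to force a maximizer loses precisely the small-scale control you wanted, since the maximizer may sit at the cutoff.

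There is also a related slip in the operator limit. Taylor's formula at $x_k$ gives $\mathcal{S}_{j_k+1}\lesssim \mathrm{K}_0r_k^{2+\alpha_0}$, so $\mathcal{S}_{j_k+1}/r_k^2\to0$, and your claim $kr_k^\beta\gg r_k^2$ is false. The matrix argument of $F_k$ is contracted, not dilated. By $(\mathrm{A}_4)$ the limit is $\operatorname{Tr}(D_{\mathrm{X}}F(\mathbf{O}_n,x_0)\,\cdot)$, never a recession operator.

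The missing idea is the paper's Simon-type blow-up: normalize by the weighted H\"older seminorm of the Hessian rather than by $\sup|u|$.
\begin{itemize}
\item Set $\phi_j(\rho)=\sup_{\rho<r<1/2}r^{2+\alpha_0-\beta}[D^2u_j]_{\alpha_0,B_r}$. It is finite at $\rho=1/j$, and $\rho_j\in(1/j,1/2)$ is chosen nearly extremal.
\item The functions $w_j(x)=u_j(\rho_jx)/(\rho_j^\beta\phi_j(\rho_j))$ satisfy $[D^2w_j]_{\alpha_0,B_R}\le R^{\beta-2-\alpha_0}$ for $R\ge1$, purely from the monotonicity of $\phi_j$.
\item Together with $w_j(0)=|Dw_j(0)|=|D^2w_j(0)|=0$, this gives uniform $\mathcal{C}^{2,\alpha_0}$ bounds on compacts without any regularity theory for the rescaled equations. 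Hence $\mathcal{C}^2_{\rm loc}$ convergence holds and the second-order vanishing passes to the limit.
\item Only scales coarser than $\rho_j$ are ever used.
\item Nondegeneracy is expressed as $[D^2w_j]_{\alpha_0,B_1}\ge\frac12$, a quantity that vanishes identically on quadratic polynomials.
\end{itemize}

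Finally, your ``main obstacle'' is mostly a non-issue here. Assumption $(\mathrm{A}_2)$ forces $\bar\alpha\le3$. For $\beta<3$ the growth bound already makes the limit a polynomial of degree at most two, which second-order vanishing kills. The borderline $\beta=3$ can occur only if $\bar\alpha=3$. The paper's argument does not cover it either, since it uses $\beta<\bar\alpha$. Your ``orthogonality normalization'' is not a well-defined step in that case.
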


For convenience, we set $\bar\alpha \defeq \frac{2+\mu}{1-m}$, and we shall use this notation throughout. The estimate in Theorem
\ref{Thm:branching-elliptic} shows that $\bar\alpha$ is the natural
vanishing order dictated by the scaling of the equation. As a consequence,
the corresponding gradient decay at higher-order singular nodal points is
given by the following corollary.

\begin{corollary}[\bf Sharp Gradient Growth]
\label{Cor:SharpGradientGrowth}
Under the assumptions of Theorem~\ref{Thm:branching-elliptic}, if additionally, $\sigma \in \left(1 +  \tfrac{1-m}{1+\mu +m},2\right]$
there exists a universal constant $\mathrm{C}>0$ such that
\[
\sup_{B_r(x_0)} |Du|
\le
\mathrm{C}\, r^{\bar\alpha-1},
\]
for every $x_0\in\Gamma_2(u)\cap B_{1/2}$ and all
$0<r<\frac14$. Equivalently,
\[
\sup_{B_r(x_0)} |Du|
\le
\mathrm{C}\, r^{\frac{1+\mu+m}{1-m}}.
\]
\end{corollary}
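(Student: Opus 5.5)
We will deduce the gradient bound from Theorem~\ref{Thm:branching-elliptic} with the endpoint choice $\beta=\bar\alpha=\frac{2+\mu}{1-m}$, combined with a rescaled interior gradient estimate. Fix $x_0\in\Gamma_2(u)\cap B_{1/2}$ and $r\in(0,1/4)$. Theorem~\ref{Thm:branching-elliptic} gives
\[
\sup_{B_{2r}(x_0)}|u|\le \mathrm{C}\,r^{\bar\alpha}.
\]
We then introduce the blow-up
\[
v(y):=\frac{u(x_0+2ry)}{(2r)^{\bar\alpha}},\qquad y\in B_1,
\]
which satisfies $\|v\|_{L^\infty(B_1)}\le \mathrm{C}$ uniformly in $r$.

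The key step is to write the equation solved by $v$ and check that its data are uniformly bounded. Setting $\rho=2r$, we have $D^2v=\rho^{2-\bar\alpha}D^2u$ and $Dv=\rho^{1-\bar\alpha}Du$. Hence $v$ solves
\[
F_\rho(y,D^2v)+\langle \rho\,\mathcal{B}(x_0+\rho y),Dv\rangle+\rho^{2-\bar\alpha+\sigma(\bar\alpha-1)}\varrho(x_0+\rho y)|Dv|^\sigma=\rho^{2-\bar\alpha+m\bar\alpha}\,\mathfrak g(x_0+\rho y)\big[(v^+)^m-(v^-)^m\big],
\]
where $F_\rho(y,\mathrm X):=\rho^{2-\bar\alpha}F(x_0+\rho y,\rho^{\bar\alpha-2}\mathrm X)$ has the same ellipticity constants $\lambda,\Lambda$.

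\emph{The source term.} Since $\mathfrak g(x_0)=0$, assumption ($\mathrm{A}_2$) gives $\mathfrak g(x_0+\rho y)\le \mathrm{C}_0\rho^\mu$. The identity $2-\bar\alpha+m\bar\alpha+\mu=2+\mu-(1-m)\bar\alpha=0$ then shows that the source is bounded by $\mathrm{C}_0\,\mathrm{C}^m$.

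\emph{The drift term.} Its coefficient $\rho\,\mathcal B$ is bounded.

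\emph{The Hamiltonian term.} The exponent $2-\bar\alpha+\sigma(\bar\alpha-1)=2-\sigma+(\sigma-1)\bar\alpha$ is nonnegative because $\sigma>1$. So the coefficient of $|Dv|^\sigma$ stays bounded for $\rho\le1$.

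Next we apply the gradient estimate of \cite[Theorem 1.6]{daSN2021} to $v$ in $B_1$. This estimate is for viscosity solutions of fully nonlinear equations with superlinear Hamiltonian growth $\sigma\in(1,2]$ and bounded right-hand side. It yields
\[
\sup_{B_{1/2}}|Dv|\le \mathrm{C}\bigl(\|v\|_{L^\infty(B_1)}+\|f_\rho\|_{L^\infty(B_1)}\bigr)\le \mathrm{C},
\]
with $\mathrm{C}$ independent of $r$. Scaling back gives $|Du(x)|=\rho^{\bar\alpha-1}|Dv|\le \mathrm{C}\,r^{\bar\alpha-1}$ on $B_r(x_0)$. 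Finally, $\bar\alpha-1=\frac{1+\mu+m}{1-m}$, which is the stated form.

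The main obstacle is the uniformity of the gradient estimate along the family of rescaled equations. The constant in \cite[Theorem 1.6]{daSN2021} may depend on the size of the Hamiltonian coefficient and on $\sigma$ through a smallness or compatibility regime. This is where the extra restriction $\sigma>1+\frac{1-m}{1+\mu+m}$ should enter. Rewriting it as $(\sigma-1)(\bar\alpha-1)>1$, it gives
\[
2-\bar\alpha+\sigma(\bar\alpha-1)=1+(\sigma-1)(\bar\alpha-1)>2.
\]
So the rescaled Hamiltonian coefficient is $o(\rho^{2})$ and vanishes in the limit, which keeps the rescaled problem inside the regime where the cited estimate applies with universal constants.

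I would first check exactly which quantities the constant of \cite[Theorem 1.6]{daSN2021} depends on. The argument also relies on the frozen operators $F_\rho$ still satisfying ($\mathrm{A}_1$) and ($\mathrm{A}_3$) with the same $\lambda,\Lambda$ and a modulus no worse than $\tau$. That holds because of the homogeneous rescaling of the matrix variable.
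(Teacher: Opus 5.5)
Your proof is correct, but it is organized differently from the paper's. The paper does not rescale by $r^{\bar\alpha}$ directly. Instead, it proves the dyadic decay $\mathrm{S}_{j+1}\le\max\{\mathrm{C}\,2^{-(\bar\alpha-1)(j+1)},\,2^{-(\bar\alpha-1)}\mathrm{S}_j\}$ for $\mathrm{S}_j=\sup_{B_{2^{-j}}}|Du|$ by contradiction, using blow-ups normalized by the gradient, $u_k(x)=2^{j_k}u(2^{-j_k}x)/\mathrm{S}_{j_k+1}$. Theorem~\ref{Thm:branching-elliptic} with $\beta=\bar\alpha$ forces $\|u_k\|_{L^\infty(B_1)}\to0$ and the source to vanish, while $\|Du_k\|_{L^\infty(B_{1/2})}=1$; this contradicts the interior gradient estimate, and the bound follows by iterating the dyadic inequality. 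Your rescaling $v(y)=u(x_0+\rho y)/\rho^{\bar\alpha}$ uses the same two ingredients (the endpoint growth estimate and a scale-invariant gradient estimate). It avoids the contradiction and the iteration, and it gives the constant directly in terms of the constants of Theorem~\ref{Thm:branching-elliptic} and of the gradient estimate.

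What the gradient normalization buys (implicitly in the paper) is control of the Hamiltonian term. There one has a priori $\|Du_k\|_{L^\infty(B_1)}\le 2^{\bar\alpha-1}$ and a coefficient $2^{-j_k}\mathrm{S}_{j_k+1}^{\sigma-1}\varrho\to0$. So $\varrho_k|Du_k|^\sigma$ can be moved into the right-hand side, and the drift-only estimate of Theorem~\ref{C1alpharegularity} suffices. In your scaling there is no a priori bound on $Dv$, so you genuinely need a gradient estimate that includes the term $\varrho|Dv|^\sigma$.

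If you prefer to rely only on Theorem~\ref{C1alpharegularity} as recorded in the paper, which has no $|Du|^\sigma$ term, there is a simple fix. Use the bound $|Du|\le\mathrm{K}_0$ available for $u\in\mathcal{G}_2^{\alpha_0}$ and write $\varrho|Du|^\sigma=\langle\varrho|Du|^{\sigma-2}Du,Du\rangle$. After your rescaling this becomes a drift of size at most $\rho\,\|\varrho\|_{L^\infty(B_1)}\mathrm{K}_0^{\sigma-1}$. Hence $v$ solves a pure drift equation with uniformly bounded coefficients and source.

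Finally, your last paragraph on the role of $\sigma>1+\frac{1-m}{1+\mu+m}$ is not needed. Your argument only uses that the exponent $1+(\sigma-1)(\bar\alpha-1)$ is nonnegative, which holds for every $\sigma>1$. The paper's proof does not invoke this restriction either.
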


The approach developed in this work is sufficiently flexible to yield several further applications concerning regularity at points of the higher-order singular nodal set $\Gamma_2(u)$. Among these,  
we establish a novel \(L^p\)-average estimate of the Hessian for viscosity solutions of \eqref{1.1} for any $n<p<\infty$, when $F$ is a convex or concave operator. 

\begin{theorem}[{\bf $L^{p}$-average estimates of the Hessian}]\label{Lpaverage}
Let $u\in \mathcal{G}_{2}^{\alpha_0}$ be a solution of \eqref{1.1} in $B_{1}$, $n<p<\infty$, fix $\beta\in (2+\alpha_{0},\bar{\alpha}]$ and $\sigma\in \left[\frac{\beta-2}{\beta-1},1\right)\cup (1,2]$. Assume that the structural conditions $\rm (A_{1})-(\rm A_{3})$ are valid and that $F$ is a convex or concave operator.  Given $x_{0}\in \Gamma_2(u)\cap \mathfrak{g}^{-1}(0)\cap B_{1/2}$, there exists a universal constant $\mathrm{M}_{0}>0$, depending on $n$, $p$, $\lambda$, $\Lambda$, $m$, $\sigma$, $\|\mathcal{B}\|_{\mathcal{C}^{0,\alpha_{0}}(B_{1};\mathbb{R}^{n})}$, $\|\varrho\|_{\mathcal{C}^{0,\alpha_{0}}(B_{1})}$ and $\|u\|_{L^{\infty}(B_{1})}$ such that
\[
\left(\intav{B_{\frac{r}{2}}(x_{0})}|D^{2}u|^{p}\,dx\right)^{\frac{1}{p}}\leq \mathrm{M}_{0}r^{\beta-2},\quad \text{for every } r\in (0,1/2).
\]
\end{theorem}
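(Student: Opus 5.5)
The plan is to rescale around $x_0$ and apply interior $W^{2,p}$ estimates for convex or concave operators in the spirit of \cite[Theorem~6.7]{daSN2021}, feeding in the growth bound of Theorem~\ref{Thm:branching-elliptic} for the zeroth-order information.

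Fix $r\in(0,1/2)$ and set
\[
v(y):=\frac{u(x_0+ry)}{r^{\beta}}, \qquad y\in B_1 .
\]
By Theorem~\ref{Thm:branching-elliptic} we have $\|v\|_{L^\infty(B_1)}\le \mathrm{C}\|u\|_{L^\infty(B_1)}$, uniformly in $r$. A direct computation shows that $v$ solves
\[
F_r(y,D^2v)+\mathscr{H}_r(y,Dv)=f_r(y),
\]
with the following ingredients:
\begin{itemize}
\item[(a)] The rescaled operator is $F_r(y,\mathrm{X}):=r^{2-\beta}F(x_0+ry,r^{\beta-2}\mathrm{X})$. It is again $(\lambda,\Lambda)$-elliptic, vanishes at $\mathbf{O}_n$, inherits convexity or concavity, and its oscillation in $x$ is controlled by $\tau(r|\cdot|)\le \tau(|\cdot|)$. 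This gives the smallness condition needed for the $W^{2,p}$ theory.
\item[(b)] The first-order term is
\[
\mathscr{H}_r(y,\xi)=r\langle \mathcal{B}(x_0+ry),\xi\rangle+r^{2-\beta+\sigma(\beta-1)}\varrho(x_0+ry)|\xi|^{\sigma}.
\]
The exponent $2-\beta+\sigma(\beta-1)$ is nonnegative precisely because $\sigma\ge \frac{\beta-2}{\beta-1}$, so the drift and the gradient nonlinearity stay bounded (indeed they improve) as $r\to0$.
\item[(c)] The source is
\[
f_r(y)=r^{2-\beta+m\beta}\,\mathfrak g(x_0+ry)\big((v^+)^m-(v^-)^m\big).
\]
Since $x_0\in\mathfrak g^{-1}(0)$, assumption $(\mathrm{A}_2)$ gives $\mathfrak g(x_0+ry)\le \mathrm{C}_0 r^{\mu}|y|^\mu$. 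Hence $|f_r|\le \mathrm{C}\, r^{2+\mu-\beta(1-m)}\|v\|_\infty^m$, which is bounded uniformly in $r$ because $\beta\le\bar\alpha=\frac{2+\mu}{1-m}$.
\end{itemize}

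Next I will apply the interior $W^{2,p}$ estimate to $v$ on $B_{1/2}$, with $n<p<\infty$:
\[
\|D^2v\|_{L^p(B_{1/2})}\le \mathrm{C}\big(\|v\|_{L^\infty(B_1)}+\|f_r\|_{L^p(B_1)}+\text{(gradient terms)}\big).
\]
This is valid for viscosity solutions of convex or concave equations with bounded drift and a gradient nonlinearity of order $\sigma\le 2$, cf.\ \cite[Theorem~6.7]{daSN2021}. The gradient terms require an a priori bound on $\|Dv\|_{L^\infty(B_{3/4})}$. I would obtain it from $\mathcal{C}^{1,\alpha}$ estimates for the rescaled equation, which depend only on $\|v\|_\infty$, $\|f_r\|_\infty$ and the bounded coefficients. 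Alternatively, since $u\in\mathcal{G}_2^{\alpha_0}$, one can move the superlinear term to the right-hand side as an $L^\infty$ source, using the local Lipschitz bound.

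Undoing the scaling gives $D^2v(y)=r^{2-\beta}D^2u(x_0+ry)$. Therefore
\[
\left(\intav{B_{r/2}(x_0)}|D^2u|^p\,dx\right)^{1/p}=r^{\beta-2}\left(\intav{B_{1/2}}|D^2v|^p\,dy\right)^{1/p}\le \mathrm{M}_0 r^{\beta-2}.
\]

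The main obstacle is verifying that the constants in the $W^{2,p}$ estimate stay uniform in $r$ in the presence of the $|\xi|^\sigma$ term. This matters particularly in the superlinear range $\sigma\in(1,2]$, where the natural-growth regime requires a smallness or boundedness condition on $\|v\|_\infty$ and on the coefficient in front of $|\xi|^\sigma$. The lower bound on $\sigma$ guarantees only that this coefficient does not blow up. I would first try to absorb the term into the source, using the uniform gradient bound on $v$ described above, and then apply the standard linear-growth $W^{2,p}$ estimate.
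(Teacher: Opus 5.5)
Your proposal is correct and follows the paper's proof essentially step for step: rescale $v(y)=u(x_0+ry)/r^{\beta}$, use Theorem~\ref{Thm:branching-elliptic} for a uniform $L^\infty$ bound, and observe that ellipticity, convexity/concavity and $(\mathrm{A}_3)$ survive the scaling. The scaling also gives $\|\mathcal{B}_r\|_{L^\infty}\le\|\mathcal{B}\|_{L^\infty}$, $\|\varrho_r\|_{L^\infty}\le\|\varrho\|_{L^\infty}$ (because $\sigma\ge\frac{\beta-2}{\beta-1}$), and keeps the source bounded (because $\beta\le\bar\alpha$); you then apply the $W^{2,p}$ estimate on $B_{1/2}$ and scale back.

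The obstacle you flag does not arise. Theorem~\ref{W2pest}, as quoted in the paper, already handles the full Hamiltonian $\langle\mathcal{B},\xi\rangle+\varrho|\xi|^{\sigma}$ with no smallness condition. Its constants depend only on the $L^\infty$ norms of the coefficients, plus the additive term $\|\varrho_r\|_{L^\infty}^{1/(1-\sigma)}$ when $\sigma<1$, so the uniform coefficient bounds suffice and no gradient bound or absorption of $|Dv|^{\sigma}$ into the source is needed. This is fortunate, since the $\mathcal{C}^{2,\alpha_0}$ bound alone only gives $|Dv|\lesssim r^{2+\alpha_0-\beta}$, which is not uniform in $r$.
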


As another consequence of Theorem \ref{Thm:branching-elliptic}, we obtain a Liouville-type result for solutions exhibiting controlled growth at infinity.

\begin{theorem}[{\bf Liouville-type result}]\label{thm:liouville}
Let \( u \in \mathcal{C}^0(\mathbb{R}^n) \) be a viscosity solution of
\[
F(x,D^{2}u) + \mathscr{H}(D u, x) = \mathscr{F}(x,u^{+},u^{-}) \quad \text{in  }\ \mathbb{R}^n,
\]
under assumptions \((\mathrm{A}_1)-(\mathrm{A}_4)\). Suppose \( x_0\in\Gamma_2(u)\cap \mathfrak{g}^{-1}(0)\cap B_{1/2}\) and
\begin{equation}\label{Hip-Liouville:Thm}
\lim_{|x| \to +\infty} \frac{|u(x)|}{|x - x_0|^{\bar{\alpha}}} = 0.    
\end{equation}
Then, \( u \equiv 0 \).
\end{theorem}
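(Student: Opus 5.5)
The idea is to deduce the Liouville statement from Theorem~\ref{Thm:branching-elliptic} by rescaling, exploiting that the decay assumption \eqref{Hip-Liouville:Thm} is stronger than the natural growth $|x-x_0|^{\bar\alpha}$. Fix $R>1$ large and consider
\[
u_R(x) \defeq \frac{u(x_0+Rx)}{R^{\bar\alpha}}, \qquad x\in B_1 .
\]
A direct computation shows that $u_R$ solves an equation of the same type:
\[
F_R(x,D^2u_R)+\mathscr{H}_R(x,Du_R)=\mathscr{F}_R(x,u_R^+,u_R^-),
\]
with the following data:
\begin{itemize}
\item $F_R(x,\mathrm{X})=R^{2-\bar\alpha}F(x_0+Rx,R^{\bar\alpha-2}\mathrm{X})$, which has the same ellipticity constants, satisfies $(\mathrm{A}_3)$ with modulus $\tau(R\,\cdot)$, and satisfies $(\mathrm{A}_4)$ after rescaling;
\item $\mathcal{B}_R(x)=R\,\mathcal{B}(x_0+Rx)$;
\item $\varrho_R(x)=R^{2-\bar\alpha+(\bar\alpha-1)\sigma}\varrho(x_0+Rx)$;
\item $\mathfrak{g}_R(x)=R^{2-\bar\alpha(1-m)}\mathfrak{g}(x_0+Rx)$.
\end{itemize}
Since $\bar\alpha(1-m)=2+\mu$, the reaction coefficient satisfies $\mathfrak{g}_R=R^{-\mu}\mathfrak{g}(x_0+R\,\cdot)$. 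This is exactly the normalization that keeps the two-sided bounds $\mathrm{c}_0\operatorname{dist}(x,\mathfrak{F}_R)^\mu\le \mathfrak{g}_R\le \mathrm{C}_0\operatorname{dist}(x,\mathfrak{F}_R)^\mu$ of $(\mathrm{A}_2)$ unchanged. Moreover $0\in \Gamma_2(u_R)\cap\mathfrak{g}_R^{-1}(0)$.

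Apply Theorem~\ref{Thm:branching-elliptic} with $\beta=\bar\alpha$ to $u_R$ at the point $0$; a harmless translation and dilation of the domain is needed so that $0\in B_{1/2}$. For fixed $y\in\mathbb{R}^n$ and $R\ge 2|y-x_0|$, this gives
\[
\frac{|u(y)|}{R^{\bar\alpha}}=|u_R(\tfrac{y-x_0}{R})|\le \mathrm{C}\,\|u_R\|_{L^\infty(B_1)}\Big(\frac{|y-x_0|}{R}\Big)^{\bar\alpha},
\quad\text{i.e.}\quad |u(y)|\le \mathrm{C}\,|y-x_0|^{\bar\alpha}\sup_{B_R(x_0)}\frac{|u|}{R^{\bar\alpha}} .
\]
By \eqref{Hip-Liouville:Thm} and the continuity of $u$, we have $\sup_{B_R(x_0)}|u|=\mathrm{o}(R^{\bar\alpha})$ as $R\to\infty$. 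Indeed, split $B_R(x_0)$ into a fixed ball $B_{R_\varepsilon}(x_0)$, where $|u|$ is bounded, and its complement, where $|u|\le\varepsilon|x-x_0|^{\bar\alpha}\le \varepsilon R^{\bar\alpha}$. Letting $R\to\infty$ gives $u(y)=0$, and since $y$ was arbitrary, $u\equiv0$.

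\textbf{Main obstacle.} The constant $\mathrm{C}$ in Theorem~\ref{Thm:branching-elliptic} must be uniform in $R$, and the rescaled data may not stay in the admissible class. The $\varrho$-term is fine: $2-\bar\alpha+(\bar\alpha-1)\sigma\le 0$ is exactly $\sigma\le\frac{\bar\alpha-2}{\bar\alpha-1}$. But under the theorem's constraint $\sigma\ge\frac{\beta-2}{\beta-1}$ the coefficient stays bounded only in the borderline case $\sigma=\frac{\bar\alpha-2}{\bar\alpha-1}$. The drift $\mathcal{B}_R=R\mathcal{B}$ blows up, and membership in $\mathcal{G}_2^{\alpha_0}$ with a uniform $\mathrm{K}_0$ must also be checked for $u_R$. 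My first attempt would therefore invoke the flexibility of the blow-up argument behind Theorem~\ref{Thm:branching-elliptic}: its proof only requires the lower-order coefficients to be bounded in the rescaled problem (which holds under the natural reading where $\mathscr{H}$ is small at large scales, e.g.\ $\mathcal{B}\equiv 0$ at infinity or a decay hypothesis). Failing that, I would rerun the contradiction/compactness argument directly on the sequence $u_{R_k}$ normalized by $\sup_{B_{R_k}}|u|$. In the limit the drift and gradient terms disappear after normalization, and the limiting profile is a global solution of $F_\infty(D^2v)=\mathfrak{g}_\infty((v^+)^m-(v^-)^m)$ vanishing to order $\bar\alpha$ at $0$ with growth $\mathrm{o}(|x|^{\bar\alpha})$. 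The Liouville classification used in the proof of Theorem~\ref{Thm:branching-elliptic} then forces this limit to vanish, contradicting the normalization $\sup_{B_1}|v|=1$.
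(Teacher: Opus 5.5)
Your main argument is the paper's proof. The paper also rescales $u_R=u(R\,\cdot)/R^{\bar\alpha}$, applies Theorem~\ref{Thm:branching-elliptic} with $\beta=\bar\alpha$ and an $R$-independent constant, and uses \eqref{Hip-Liouville:Thm} to make the normalizing factor vanish. Your write-up of this step is in fact cleaner: you keep the translation by $x_0$, you work with $\sup_{B_R(x_0)}|u|$ rather than the paper's $\|u_R\|_{L^\infty(\mathbb{R}^n)}$ (which is infinite whenever $u$ is unbounded), and you conclude $u(y)=0$ directly.

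The obstacle you flag is genuine, and the paper does not address it; it simply asserts that $u_R$ ``solves \eqref{1.1}'' and reuses the constant. Several quantities are not controlled uniformly in $R$:
\begin{itemize}
\item the drift $\mathcal{B}_R=R\,\mathcal{B}(x_0+R\,\cdot)$ blows up;
\item $\varrho_R$ carries the exponent $2-\bar\alpha+(\bar\alpha-1)\sigma$, which, as you note, is nonnegative on the whole admissible range;
\item the moduli in $(\mathrm{A}_3)$--$(\mathrm{A}_4)$ become $\tau(R\,\cdot)$ and $\omega(R^{\bar\alpha-2}\,\cdot)$;
\item no $R$-uniform $\mathrm{K}_0$ bound for $u_R$ is available.
\end{itemize}
Theorem~\ref{Thm:branching-elliptic} is proved by compactness, and its blow-up limit solves $\operatorname{Tr}(\mathfrak{A}(0)D^2w)=0$ only because all of these quantities stay controlled along the sequence. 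Its constant (which by its own statement depends on $\|\mathcal{B}\|$ and $\|\varrho\|$) is therefore not uniform in $R$. So your primary argument, like the paper's, is complete only in two situations. One is when the equation is invariant under this rescaling (for instance $F=\operatorname{Tr}(\mathfrak{A}\mathrm{X})$ with constant $\mathfrak{A}$, $\mathcal{B}\equiv0$, $\varrho\equiv0$) and the rescalings obey a uniform $\mathcal{C}^{2,\alpha_0}$ bound. The other is under extra decay assumptions on the lower-order terms, which change the statement.

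Your fallback does not close this gap. Take $v_k=u(x_0+R_k\,\cdot)/M_k$ with $M_k=\sup_{B_{R_k}(x_0)}|u|$.
\begin{itemize}
\item The equation for $v_k$ has drift $R_k\,\mathcal{B}(x_0+R_k\,\cdot)$. This term is linear in $u$, so normalizing the amplitude cannot remove it.
\item The reaction coefficient is $R_k^{2}M_k^{m-1}\mathfrak{g}(x_0+R_k\,\cdot)$. By $(\mathrm{A}_2)$ it is of order $R_k^{2+\mu}M_k^{-(1-m)}$ away from the rescaled zero set of $\mathfrak{g}$. This factor tends to $+\infty$ precisely because $M_k=\mathrm{o}(R_k^{\bar\alpha})$.
\end{itemize}
Hence there is no limiting problem $F_\infty(D^2v)=\mathfrak{g}_\infty\big((v^+)^m-(v^-)^m\big)$ with finite $\mathfrak{g}_\infty$. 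In any case, the Liouville property used in Theorem~\ref{Thm:branching-elliptic} concerns the homogeneous equation $\operatorname{Tr}(\mathfrak{A}(0)D^2w)=0$, not a reaction equation. Finally, the claim that the limit vanishes to order $\bar\alpha$ at $0$ would itself require Theorem~\ref{Thm:branching-elliptic} with constants uniform in $R$, which is circular.

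As it stands, your proposal proves the statement exactly as far as the paper does, namely under scale invariance, and no further.
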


In addition, under appropriate conditions, we establish that regularity at higher-order singular nodal points holds up to the boundary. Precisely, we aim to investigate the following model:
\begin{equation}\label{boundeq}
\left\{
\begin{array}{rcll}
F(x,D^{2}u) + \mathscr{H}(x, D u) & = & \mathfrak{g}(x)\big[ (u^+)^m -(u^-)^m \big] & \text{in }\ B_{1}^{+},\\
u & = & \mathrm{h} & \text{on }\ T_{1},
\end{array}
\right.
\end{equation}
where, for $x_{0}\in T_{1}$ and $r>0$, we denote
\begin{equation*}
B_{r}^{+}(x_{0}) := B_{r}(x_{0})\cap \mathbb{R}^{n}_{+}
\qquad\text{and}\qquad
T_{r}(x_{0}) := B_{r}(x_{0})\cap \{x_{n}=0\}.
\end{equation*}
In particular, $B_{1}^{+}=B_{1}^{+}(0)$ and $T_{1}=T_{1}(0)$.

To state the boundary regularity result, we introduce the following class of functions associated with problem \eqref{boundeq}:
\begin{equation*}
\widetilde{\mathcal{G}}_{2}^{\alpha_0}
:=
\left\{
u \in \mathcal{C}^{0}(B^{+}_1\cup T_{1}) \;\mbox{solving \eqref{boundeq} } : \
\|u\|_{\mathcal{C}^{2,\alpha_0}(\overline{B^{+}_{1/2}})}
\leq \mathrm{K}_0(\widetilde{\mathrm{M}}_0,\text{data}),
\ \alpha_0\in(0,1)
\right\},
\end{equation*}
where $\widetilde{\mathrm{M}}_{0}>0$ is such that $\max\left\{
\|u\|_{L^{\infty}(B^{+}_{1})},
\|\mathcal{B}\|_{\mathcal{C}^{0,\alpha_{0}}(B^{+}_{1};\mathbb{R}^{n})},
\|\varrho\|_{\mathcal{C}^{0,\alpha_{0}}(B^{+}_{1})}
\right\}
\leq \widetilde{\mathrm{M}}_{0}$. Assume further that $\mathrm{h} \in \mathcal{C}^{2,\widehat\beta-2}(T_1)$ for some $\widehat\beta>2+\alpha_0$ and that
\[
\mathrm{h}=|D\mathrm{h}|=|D^2\mathrm{h}|=0
\]
at higher-order singular nodal points. Under these assumptions, the higher-order vanishing of the boundary data propagates into the interior, yielding the following boundary regularity estimate.

\begin{theorem}[{\bf Boundary estimate at higher-order singular nodal points}]\label{Thm:boundary}
Suppose that the hypotheses of Theorem~\ref{Thm:branching-elliptic} are satisfied, and consider $\mathrm{h} \in \mathcal{C}^{2,\widehat\beta-2}(T_1)$, for some $\widehat\beta > 2 + \alpha_0$.  Let $u\in \widetilde{\mathcal{G}}_{2}^{\alpha_{0}}$ be a viscosity solution of \eqref{boundeq}. Given
$x_0\in\Gamma_2(u)\cap \mathfrak{g}^{-1}(0) \cap T _{1/2}$, there exists a constant $\mathrm{C}>0$ depending only on 
$n$, $\lambda$, $\Lambda$, $m$, $\sigma$,
$\|\mathcal{B}\|_{\mathcal{C}^{0,\alpha_{0}}(B_1^+;\mathbb{R}^{n})}$,
$\|\varrho\|_{\mathcal{C}^{0,\alpha_{0}}(B_1^+)}$,
$\|\mathfrak g\|_{L^\infty(B_1^+)}$, and
$\|\mathrm{h}\|_{\mathcal{C}^{2,\widehat\beta -2}(T _1)}$,
such that
\[
|u(x)|
\le \mathrm{C}\|u\|_{L^{\infty}(B_{1}^{+})}|x-x_{0}|^{\beta},
\quad \forall \ x\in B_{r}^{+}(x_{0})\cup T_{r}(x_{0}), \ \forall \ r\in(0,1/2).
\]
Consequently,
\[
\sup_{B_{r}^{+}(x_{0})\cup T_{r}(x_{0})}|u|\leq \mathrm{C}\|u\|_{L^{\infty}(B_{1})}r^{\beta},\quad \forall \ r\in (0,1/2).
\]
\end{theorem}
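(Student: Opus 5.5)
We argue by contradiction, following the blow-up/compactness/Liouville strategy used for the interior estimate in Theorem~\ref{Thm:branching-elliptic}, adapted to the half-ball. After translating and normalizing, we may assume $x_0=0\in T_{1/2}$ and $\|u\|_{L^\infty(B_1^+)}\le 1$. It then suffices to prove a dyadic statement: there is a universal $\mathrm{C}>0$ such that
\[
S_j:=\sup_{B^+_{2^{-j}}}|u| \le \mathrm{C}\,2^{-j\beta}
\]
for all $j\ge 1$. The pointwise bound follows from this by taking $j$ with $2^{-j-1}\le|x|<2^{-j}$.

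Suppose the dyadic bound fails. Then there are sequences $u_k\in\widetilde{\mathcal G}_2^{\alpha_0}$, data $(F_k,\mathcal B_k,\varrho_k,\mathfrak g_k,\mathrm h_k)$ with uniform bounds, and points $0\in\Gamma_2(u_k)\cap\mathfrak g_k^{-1}(0)\cap T_{1/2}$ such that
\[
\max_j 2^{j\beta}S_j(u_k)\ge k .
\]
Choose $j_k$ as the first index attaining the maximum up to a factor, in the standard way. Since $u_k\in\mathcal C^{2,\alpha_0}$ with $u_k=Du_k=D^2u_k=0$ at $0$, we have $S_j\le \mathrm{K}_0 2^{-j(2+\alpha_0)}$. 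Because $\beta>2+\alpha_0$, this forces $j_k\to\infty$.

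Define the blow-ups
\[
v_k(x):=\frac{u_k(2^{-j_k}x)}{S_{j_k}(u_k)} \quad\text{on } B^+_{2^{j_k}}.
\]
They satisfy $\sup_{B_1^+}|v_k|=1$, and the choice of $j_k$ gives the growth control
\[
\sup_{B_R^+}|v_k|\le \mathrm{C}R^{\beta}\quad\text{for } R\ge1 .
\]
Each $v_k$ solves a rescaled problem in which, with $r_k=2^{-j_k}$ and $\theta_k=S_{j_k}$:
\begin{itemize}
\item the operator is $F_k(r_kx,\cdot)$;
\item the drift coefficient is multiplied by $r_k$;
\item the nonlinear gradient term carries the factor $r_k^{2-\sigma}\theta_k^{\sigma-1}$;
\item the reaction term carries the factor $r_k^{2}\theta_k^{m-1}\mathfrak g_k(r_kx)$.
\end{itemize}
Using $\theta_k\ge k\,r_k^{\beta}$, the bound $\mathfrak g_k(r_kx)\le \mathrm C_0|r_kx|^{\mu}$ (since $0\in\mathfrak F$), and $\beta\le\bar\alpha=\frac{2+\mu}{1-m}$, the reaction factor is at most
\[
r_k^{2+\mu-\beta(1-m)}k^{m-1}\to0 .
\]
The gradient factor is handled differently in the two ranges of $\sigma$:
\begin{itemize}
\item for $\sigma>1$ it tends to zero because $\theta_k\to0$;
\item for $\sigma<1$ we use $\theta_k\ge k r_k^{\beta}$, which gives the bound $r_k^{2-\sigma-\beta(1-\sigma)}k^{\sigma-1}$; the exponent is nonnegative exactly when $\sigma\ge\frac{\beta-2}{\beta-1}$, so the factor tends to zero.
\end{itemize}
The drift factor tends to zero trivially. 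The boundary data become
\[
\mathrm h_k(r_kx')/\theta_k .
\]
Since $\mathrm h_k$ vanishes to second order at $0$ and $\mathrm h_k\in\mathcal C^{2,\widehat\beta-2}$, we get $|\mathrm h_k(r_kx')|\le \mathrm C r_k^{\widehat\beta}|x'|^{\widehat\beta}$. Provided $\widehat\beta\ge\beta$ (or we replace $\beta$ by $\min\{\beta,\widehat\beta\}$, which is what the constant's dependence on $\|\mathrm h\|$ suggests), the rescaled data go to zero locally uniformly on $\{x_n=0\}$.

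Next we pass to the limit. By $(\mathrm A_3)$ and $(\mathrm A_1)$, $F_k(r_kx,\cdot)$ converges locally uniformly, up to a subsequence, to a constant-coefficient uniformly elliptic operator $F_\infty$ with $F_\infty(\mathbf O_n)=0$. Uniform boundary H\"older estimates for Pucci-type inequalities with small lower-order terms and vanishing boundary data, together with the interior $\mathcal C^{1,\alpha}$ theory, give local uniform convergence of $v_k$ to some $v_\infty\in\mathcal C(\overline{\mathbb R^n_+})$. By the stability of viscosity solutions, $v_\infty$ solves
\[
F_\infty(D^2v_\infty)=0 \ \text{in } \mathbb R^n_+,\qquad v_\infty=0 \ \text{on } \{x_n=0\},
\]
with $\sup_{B_1^+}|v_\infty|=1$ and $|v_\infty(x)|\le \mathrm C(1+|x|^{\beta})$. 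Moreover, the rescaled $\mathcal C^{2,\alpha_0}$ information passes to the limit: $v_\infty$ is $\mathcal C^{2,\alpha}$ up to the flat boundary (via $(\mathrm A_4)$ and Krylov-type boundary estimates) and $v_\infty=Dv_\infty=D^2v_\infty=0$ at $0$. More quantitatively, the choice of $j_k$ yields $\sup_{B_\rho^+}|v_\infty|\le \mathrm C\rho^{\beta}$ for small $\rho$ as well.

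The final step, and the main obstacle, is the Liouville theorem in the half-space. Iterating the boundary $\mathcal C^{2,\alpha}$ estimate for $F_\infty$ at all scales $R$, and using the growth $R^\beta$, one obtains that the Hessian of $v_\infty$ has bounded $\mathcal C^{0,\beta-2}$ seminorm type behaviour. If $\beta-2<\bar\alpha_{F}$, the Krylov boundary exponent, then $v_\infty$ is a quadratic polynomial vanishing on $\{x_n=0\}$; for general $\beta$ we must use the higher regularity from $(\mathrm A_4)$ and differentiate tangentially, since tangential difference quotients solve linearized equations with the same boundary structure. A quadratic vanishing to second order at $0$ is identically zero, which contradicts $\sup_{B_1^+}|v_\infty|=1$.

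The genuinely delicate point is that $\beta$ may exceed $3$, so polynomial growth alone does not force triviality. I would handle this with the two-sided smallness just described, namely that $|v_\infty|\lesssim\rho^\beta$ near $0$ and $\lesssim R^\beta$ at infinity with the same constant, combined with a scaling argument on $v_\infty$. Failing that, I would restrict to $\beta$ for which this classification applies, exactly as in the interior case.
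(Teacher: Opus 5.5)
There is a genuine gap in how you obtain the second-order vanishing of the limit at $0$, and it comes from your normalization. Dividing by $\theta_k=S_{j_k}$ does not transfer the class bound to the blow-ups. Indeed,
$$[D^2v_k]_{\alpha_0,B_1^+}=\frac{r_k^{2+\alpha_0}[D^2u_k]_{\alpha_0,B_{r_k}^+}}{\theta_k}\le \frac{\mathrm{K}_0r_k^{2+\alpha_0}}{\theta_k},$$
and you only know $k r_k^{\beta}\le\theta_k\le \mathrm{K}_0r_k^{2+\alpha_0}$ with $\beta>2+\alpha_0$. So this ratio may blow up, and ``the rescaled $\mathcal{C}^{2,\alpha_0}$ information passes to the limit'' is unjustified. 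Regularity of $v_\infty$ itself says nothing about $Dv_\infty(0)$ or $D^2v_\infty(0)$.

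Your substitute, the bound $\sup_{B_\rho^+}|v_\infty|\le \mathrm{C}\rho^\beta$ for small $\rho$, would require $j_k$ to almost maximize $2^{j\beta}S_j(u_k)$ over all $j\ge j_k$ as well. That supremum may be $+\infty$; for instance, $2^{j\beta}S_j$ may increase in $j$, and its finiteness is exactly what is being proved. Hence only the large-scale bound $\sup_{B_R^+}|v_k|\le \mathrm{C}R^\beta$, $R\ge1$, is actually available. Without $v_\infty(0)=|Dv_\infty(0)|=|D^2v_\infty(0)|=0$, nothing in your argument excludes a limit such as $x_n$ or $x_1x_n$ (for the Laplacian). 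Such a limit is compatible with $\sup_{B_1^+}|v_\infty|=1$ and $R^\beta$ growth, so no contradiction arises.

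The paper's normalization is designed to avoid exactly this. It divides by $\rho_j^{\beta}\phi_j(\rho_j)$, where $\phi_j(\rho)=\sup_{\rho<r<1/2}r^{2+\alpha_0-\beta}[D^2u_j]_{\alpha_0,B_r^+\cup T_r}$. Then $\|w_j\|_{\mathcal{C}^{2,\alpha_0}(B_r^+\cup T_r)}\le 4r^{\beta}$ follows from the definition of $\phi_j$ together with $w_j(0)=|Dw_j(0)|=|D^2w_j(0)|=0$. Consequently $w_j\to w_\infty$ in $\mathcal{C}^2$ up to $\{x_n=0\}$, and the second-order vanishing at $0$ is inherited by $w_\infty$.

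Your final step is left open (``Failing that, I would restrict\dots''), but the obstacle you describe is not there.
\begin{itemize}
\item Hypothesis $(\mathrm{A}_2)$ gives $0\le\mu\le1-3m$, hence $\bar\alpha\le 3$ and $\beta\le 3$; the case $\beta>3$ never arises.
\item The rescaled operator is $\frac{r_k^2}{\theta_k}F\bigl(r_kx,\frac{\theta_k}{r_k^2}\mathrm{X}\bigr)$, not $F_k(r_kx,\cdot)$. Since $\theta_k/r_k^2\le \mathrm{K}_0r_k^{\alpha_0}\to0$, hypothesis $(\mathrm{A}_4)$ linearizes it to $\operatorname{Tr}(\mathfrak{A}(0)\mathrm{X})$.
\item After a linear change of variables the limit is harmonic in a half-space with zero boundary values. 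Odd reflection and the classical Liouville theorem then give a harmonic polynomial, of degree at most two when $\beta<3$, which the second-order vanishing kills.
\end{itemize}
No Krylov iteration or tangential differentiation is needed. At the endpoint $\beta=\bar\alpha=3$, the cubic $x_1^2x_n-\tfrac13x_n^3$ survives; the paper's argument also implicitly uses $\beta<\bar\alpha$ at this point.

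Finally, your remark that $\widehat\beta\ge\beta$ is needed is correct and is not a matter of taste. The conclusion restricted to $T_r(x_0)$ already forces $|\mathrm{h}(x')|\le \mathrm{C}|x'-x_0'|^{\beta}$. The paper, by contrast, asserts $\widetilde{\mathrm{h}}_j\to0$ using only $\widehat\beta>2+\alpha_0$.
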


{The preceding result is new in the nondivergence two-phase setting with Hamiltonian
terms and remains novel even in the linear case. Moreover, it is closely related to
the classical global Schauder estimates established by the authors in their recent
work \cite{BdaDO2025}}.   

\medskip

Inspired by \cite{BF24}, next result shows that, at points $x_{0}\in \Gamma_2(u)\cap \mathfrak{g}^{-1}(\{0\})$, if the positive (resp. negative) part of $u$ decays in $B_{r}(x_{0})$ at the rate $r^{\bar{\alpha}}$, then the same decay holds for the other part. This property is known as \textit{flipping estimates} and is summarized in the following result.

\begin{theorem}[\bf Flipping estimates]\label{ThmFlipElliptic}
Let $u\in \mathcal{C}^{0}(B_1)$ be a viscosity solution to
\eqref{1.1} and  $x_{0}\in \Gamma_2(u)\cap \mathfrak{g}^{-1}(\{0\})$. Assume  the structural conditions $(\mathrm{A}_1)-(\mathrm{A}_4)$ hold, and
\(
\ \sigma\in\left(\frac{2m+\mu}{1+\mu+m},1\right).
\) Suppose also that there exists $r_{0}>0$ such that
\(
B_{r_{0}}(x_{0})\Subset B_1.
\)
If
\[
\sup_{B_r(x_{0})} u^{+}
\leq
\mathrm{C}_{0}r^{\bar{\alpha}}
\qquad \text{for all } r\in(0,r_{0}],
\]
then there exists a constant $\mathrm{C}_{1}>0$ such that
\[
\sup_{B_r(x_{0})} u^{-}
\leq
\mathrm{C}_{1}r^{\bar{\alpha}}
\qquad \text{for all } r\in(0,r_{0}].
\]
The analogous conclusion remains valid after interchanging the roles of
$u^{+}$ and $u^{-}$.
\end{theorem}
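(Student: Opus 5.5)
We propose to prove Theorem~\ref{ThmFlipElliptic} by a barrier/comparison argument at each scale, working in the region where $u<0$ and treating $u^{-}$ as a nonnegative solution of an inequality with absorption. Translate so that $x_0=0$. On the open set $\{u<0\}\cap B_{r_0}$ the function $v:=u^{-}=-u$ satisfies, in the viscosity sense,
\[
\widetilde F(x,D^2v)+\langle -\mathcal{B}(x),Dv\rangle-\varrho(x)|Dv|^{\sigma}=\mathfrak g(x)\,v^{m},\qquad \widetilde F(x,\mathrm X):=-F(x,-\mathrm X),
\]
and $\widetilde F$ is again $(\lambda,\Lambda)$-elliptic with $\widetilde F(\mathbf O_n,x)=0$. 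On $\partial\{u<0\}$ we have $v=0$. So $v$ vanishes on the nodal boundary, while on $\partial B_r$ it is a priori only bounded. The hypothesis on $u^{+}$ is used to control how the solution can become negative: near $x_0$ the positive part is already small at the scale $r^{\bar\alpha}$. Since $x_0\in\mathfrak g^{-1}(0)$, we have $\mathfrak g(x)\ge c_0\,\mathrm{dist}(x,\mathfrak F)^{\mu}$, but this may vanish near $x_0$ in a nonuniform way. We therefore intend to use a radial supersolution adapted to the worst case $\mathfrak g\sim |x|^{\mu}$, together with the comparison principle of \cite[Theorem 1.7]{BBdaSSS25} quoted in the introduction.

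\textbf{Step 1 (barrier).} For $y$ near $0$, consider $\Psi(x)=\kappa|x-y|^{\bar\alpha}$. By the choice $\bar\alpha=(2+\mu)/(1-m)$, both $\mathcal M^{\pm}(D^2\Psi)$ and $\mathfrak g\,\Psi^m$ scale like $|x-y|^{\bar\alpha-2}=|x-y|^{\mu+m\bar\alpha}$. The drift term is of order $|x-y|^{\bar\alpha-1}$, which is of higher order for small $|x-y|$. The gradient power term is of order $|x-y|^{\sigma(\bar\alpha-1)}$. The restriction $\sigma>\frac{2m+\mu}{1+\mu+m}$ is exactly what makes this exponent exceed $\bar\alpha-2$, since $\sigma(\bar\alpha-1)>\bar\alpha-2 \iff \sigma>\frac{\bar\alpha-2}{\bar\alpha-1}=\frac{2m+\mu}{1+\mu+m}$. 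Hence, on small balls, the lower-order terms are absorbed. Choosing $\kappa$ in terms of $c_0$ and the ellipticity constants gives the sub/supersolution inequality needed for comparison.

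\textbf{Step 2 (iteration).} Use a dyadic argument. Let $S_k:=\sup_{B_{2^{-k}}}u^{-}$, and suppose $S_{k+1}>C_1 2^{-(k+1)\bar\alpha}$ with $C_1$ large. Rescale via
\[
w(x)=\frac{u(2^{-k}x)}{2^{-k\bar\alpha}},
\]
which solves an equation of the same class with coefficients whose lower-order parts shrink. This uses $\sigma>\frac{\bar\alpha-2}{\bar\alpha-1}$ again, so the rescaled $\varrho$ carries a factor $2^{-k(\sigma(\bar\alpha-1)-(\bar\alpha-2))}\to0$. For $w$ we then have $w^{+}\le C_0$ on $B_1$ and $\sup_{B_{1/2}}w^{-}>C_1 2^{-\bar\alpha}$.

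Compare $w^{-}$ with the barrier $\Psi$ centered at a point of $B_{1/2}$ where $w^{-}$ is large. Combined with the non-degeneracy mechanism (Theorem~\ref{Non-degeneracy of solutions}) applied to the negative phase, this forces $w^{-}$ to grow. Moreover, since $x_0\in\Gamma_2(u)$ lies in $\partial\{u>0\}$, positive points accumulate near $0$. Using the equation for $w^{+}$ with its bound $C_0$, the Harnack/ABP estimate shows that the negative phase cannot be too large without pushing the positive phase through a sign change. The contradiction yields $S_{k+1}\le \max\{C_1 2^{-(k+1)\bar\alpha},\,2^{-\bar\alpha}S_k\}$. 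Iterating then gives $S_k\lesssim 2^{-k\bar\alpha}$, and the base case is handled by $S_0\le\|u\|_\infty$ on $B_{r_0}$.

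\textbf{Main obstacle.} The hard step is the interaction argument in Step 2: transferring information from $u^{+}$ to $u^{-}$. No strong maximum principle or Harnack inequality holds across phases. My first attempt would be a compactness/blow-up argument. Assume that normalized quotients $w_k=u(r_kx)/\sup_{B_{r_k}}u^{-}$ have $\sup u^{+}$ tending to zero and uniform $\mathcal C^{1,\alpha}$ bounds, the bounds coming from the scaling of Step 2. The limit is then a nonpositive solution of a homogeneous equation, or one with vanishing right-hand side, which vanishes at $0$ together with its gradient. It equals $-1$ somewhere on $\overline{B_1}$ and grows at most polynomially by the dyadic maximality choice of $r_k$. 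The strong maximum principle and a Hopf-type argument at $0$, which is a nodal point, force this limit to vanish identically, giving the contradiction.

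The $\sigma$-range is what guarantees that the gradient nonlinearity disappears in the limit. If $\sigma<1$ the term is superlinear after scaling, and the lower bound on $\sigma$ is needed precisely there.
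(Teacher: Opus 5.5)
Your blow-up argument in the ``Main obstacle'' paragraph is correct, and it is essentially the paper's proof. The paper proves the dichotomy $s_{j+1}\le\max\{\overline{\mathrm C}\,2^{-\bar\alpha(j+1)},2^{-\bar\alpha}s_j\}$ for $s_j=\|u\|_{L^\infty(B_{2^{-j}})}$ and rescales as $u_k(x)=u(2^{-j_k}x)/s_{j_k+1}$. The bound $s_{j_k+1}>k\,2^{-\bar\alpha(j_k+1)}$ then sends three terms to zero: the drift; the source, using $\mathfrak g\le \mathrm C_0|x|^{\mu}$ and $\bar\alpha(1-m)=2+\mu$; and the $\varrho$-term, using exactly your inequality $\sigma(\bar\alpha-1)>\bar\alpha-2$. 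The hypothesis on $u^+$ gives $u_k^+=O(1/k)$. The limit is therefore $\le 0$ and solves a homogeneous uniformly elliptic equation (the paper gets $\mathrm{Tr}(\mathfrak A(0)D^2u_\infty)=0$ via $(\mathrm A_4)$). It has an interior maximum $u_\infty(0)=0$, so the strong maximum principle contradicts $\|u_\infty\|_{L^\infty(B_{1/2})}=1$. Your normalization by $\sup_{B_{r_k}}u^-$ instead of $\|u\|_{L^\infty}$ works equally well and gives the $u^-$ bound directly. Only local uniform convergence is needed; neither $Du_\infty(0)=0$ nor a Hopf argument plays any role.

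You should, however, discard Steps 1--2 rather than combine them with this. The barrier $\kappa|x-y|^{\bar\alpha}$ and the non-degeneracy theorem only give \emph{lower} bounds on the phases, so ``forcing $w^-$ to grow'' cannot contradict an upper bound. The cross-phase Harnack/ABP step has no justification, as you yourself concede. Also, the normalization by $2^{-k\bar\alpha}$ in Step 2 leaves the source coefficient $2^{k\mu}\mathfrak g(2^{-k}x)$ merely bounded, not vanishing. A limit could then solve an absorption equation with a dead core, and the strong maximum principle would fail. It is precisely the contradiction normalization $\sup_{B_{r_k}}u^-\ge k\,r_k^{\bar\alpha}$ that makes the right-hand side vanish in the limit and makes the argument close.
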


Finally, regarding the analysis at higher-order singular nodal points, we also obtain the non-degeneracy of the solutions in each phase along these points.

\begin{theorem}[{\bf Non-degeneracy of solutions}]\label{Non-degeneracy of solutions}
Let \(u \in \mathcal{C}^{0}(B_1)\) be a viscosity solution of \eqref{1.1}. Suppose that  {$(\mathrm{A}_1)-(\mathrm{A}_2)$} hold.
Then, for every \(x_{0}\in \Gamma_2(u)\cap \mathfrak{g}^{-1}(\{0\})\),
\[
\sup_{B_r(x_0) \cap \{u>0\}} u(x) \geq \mathrm{C}_1\, r^{\bar{\alpha}}, 
\quad \text{and} \quad 
\inf_{B_r(x_0) \cap \{u<0\}} u(x) \leq -\mathrm{C}_2\, r^{\bar{\alpha}},
\]
for all \( r < \mathrm{dist}(x_0, \partial B_1) \), where the constants \(\mathrm{C}_1,\mathrm{C}_{2}> 0\) {depend} only on  $n$, $\Lambda$, $\mathrm{C}_{0}$, $\|\mathcal{B}\|_{L^{\infty}(B_1;\mathbb{R}^{n})}$, $\|\varrho\|_{L^{\infty}(B_1)}$, $\sigma$, $m$,  and $\mu$.
\end{theorem}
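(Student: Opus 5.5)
We prove the non-degeneracy by the standard barrier-comparison argument of Caffarelli type, adapted to the two-phase structure and to the vanishing weight $\mathfrak{g}$. By symmetry it suffices to prove the statement for the positive phase; the negative phase follows by applying the same argument to $v=-u$. Indeed, $v$ solves an equation of the same type with $\tilde F(x,\mathrm{X})=-F(x,-\mathrm{X})$ and $\tilde{\mathcal{B}}=\mathcal{B}$, $\tilde\varrho=-\varrho$, and the right-hand side becomes $\mathfrak{g}[(v^+)^m-(v^-)^m]$. All of this is compatible with $(\mathrm{A}_1)$–$(\mathrm{A}_2)$.

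\textbf{Step 1: reduction.} Since $x_0\in\partial\{u>0\}$, we pick a point $y\in B_{r/2}(x_0)\cap\{u>0\}$ with $u(y)>0$. It is enough to show that $\sup_{B_{r/2}(y)\cap\{u>0\}}u\ge \mathrm{C}_1 r^{\bar\alpha}$, and then let $y\to x_0$. Inside $\Omega^+:=\{u>0\}\cap B_{r/2}(y)$ the function $u$ solves
\[
F(x,D^2u)+\langle\mathcal B,Du\rangle+\varrho|Du|^\sigma=\mathfrak g\,u^m.
\]
Since $x_0\in\mathfrak F$, for $x\in B_r(x_0)$ we have $\mathfrak g(x)\ge \mathrm c_0\operatorname{dist}(x,\mathfrak F)^\mu$, but this only gives an upper bound $\mathfrak g\le \mathrm C_0|x-x_0|^\mu$. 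For a lower barrier inequality we need $\mathfrak g u^m$ large compared with the operator applied to the barrier, so the upper bound $\mathfrak g\le \mathrm C_0 |x-x_0|^{\mu}$ is the relevant one. We use it in the form $\mathfrak g(x)\le \mathrm C_0(|x-y|+r)^\mu\lesssim r^\mu$ on $B_{r/2}(y)$.

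\textbf{Step 2: the barrier.} We set $\Psi(x)=\kappa|x-y|^{\bar\alpha}$ with $\bar\alpha=\frac{2+\mu}{1-m}>2$. A direct computation gives
\[
\mathcal M^+(D^2\Psi)\le \kappa\Lambda\bar\alpha(\bar\alpha+n-2)|x-y|^{\bar\alpha-2},\qquad |D\Psi|=\kappa\bar\alpha|x-y|^{\bar\alpha-1}.
\]
We then show that $\Psi$ is a strict supersolution in the sense needed, namely
\[
F(x,D^2\Psi)+\mathscr H(x,D\Psi)\le \mathfrak g\,\Psi^m\quad\text{fails},
\]
and so on. We arrange the comparison as follows: $w:=u-\Psi$ satisfies, in $\Omega^+$,
\[
\mathcal M^+(D^2w)+|\mathcal B||Dw|+\ldots\ge \mathfrak g u^m-\mathcal M^+(D^2\Psi)-|\mathcal B||D\Psi|-\|\varrho\|_\infty|D\Psi|^\sigma .
\]
The role of the homogeneity $\bar\alpha$ is that $|x-y|^{\bar\alpha-2}$ matches $r^\mu\Psi^m\sim r^{\mu}\kappa^m|x-y|^{m\bar\alpha}$ at scale $r$, since $\bar\alpha-2=\mu+m\bar\alpha$. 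This allows $\kappa$ to be chosen small, depending only on $n,\Lambda,\mathrm C_0,m,\mu$, so that the barrier is ``sub'' relative to the reaction. The drift terms carry the extra factors $|x-y|^{\bar\alpha-1}$ and $|x-y|^{\sigma(\bar\alpha-1)}$, and these are higher order once $r$ is small, thanks to $\bar\alpha>2$. This is where $\|\mathcal B\|_\infty$, $\|\varrho\|_\infty$ and $\sigma$ enter the constants.

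\textbf{Step 3: comparison.} We invoke the comparison principle \cite[Theorem 1.7]{BBdaSSS25} on $\Omega^+$ for $u$ versus $\Psi$. If $u\le\Psi$ on $\partial\Omega^+$, then $u\le\Psi$ in $\Omega^+$, which contradicts $u(y)>0=\Psi(y)$; more precisely, we perturb to $\Psi+\varepsilon$ if needed, and one checks the sign carefully. On $\partial\{u>0\}\cap B_{r/2}(y)$ we have $u=0\le\Psi$. Hence $u>\Psi$ at some point of $\partial B_{r/2}(y)\cap\overline{\{u>0\}}$, giving $\sup u\ge\kappa(r/2)^{\bar\alpha}$.

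\textbf{Main obstacle.} The delicate point is the sign bookkeeping in Step 2. Because the reaction $\mathfrak g u^m$ is increasing but sublinear in $u$ and $\mathfrak g$ degenerates at $x_0$, the comparison must be run with $u^m$ controlled via $u\le\Psi$ on the contact region. The degeneracy of $\mathfrak g$ also means the constant depends on $\mathrm C_0$ instead of $\mathrm c_0$. If the direct comparison fails because of the non-Lipschitz term $u^m$, we would instead compare on the set $\{u>\Psi\}$, or use the rescaled function $u_r(x)=r^{-\bar\alpha}u(x_0+rx)$. This rescaled function solves an equation of the same class with drift coefficients multiplied by $r$ and $r^{2-\sigma+\ldots}$, which are small. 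The barrier argument then reduces to a fixed scale $r=1$.
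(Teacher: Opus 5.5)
There is a genuine gap, and it sits exactly in the ``sign bookkeeping'' you defer: you use the wrong bound on the weight. The comparison in your Step 3 (if $u\le\Psi$ on $\partial\Omega^+$ then $u\le\Psi$ in $\Omega^+$) needs $u$ to be a subsolution and $\Psi$ a \emph{strict supersolution} of the same operator $w\mapsto F(x,D^2w)+\mathscr H(x,Dw)-\mathfrak g\,(w^+)^m$, that is,
\[
F(x,D^2\Psi)+\mathscr H(x,D\Psi)<\mathfrak g(x)\,\Psi^m \quad \text{in } \Omega^+ .
\]
Since $D^2\Psi\ge 0$, you have $F(x,D^2\Psi)\ge\mathcal M^-_{\lambda,\Lambda}(D^2\Psi)=\lambda\kappa\bar\alpha(n+\bar\alpha-2)|x-y|^{\bar\alpha-2}$. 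Also $\Psi^m=\kappa^m|x-y|^{m\bar\alpha}$ and $\bar\alpha-2=\mu+m\bar\alpha$. So this inequality forces a \emph{lower} bound $\mathfrak g(x)>\lambda\bar\alpha(n+\bar\alpha-2)\kappa^{1-m}|x-y|^{\mu}$, even before the drift is taken into account. Your estimate $\mathfrak g\le \mathrm C_0(|x-y|+r)^\mu\lesssim r^\mu$ only makes the right-hand side smaller, so it can never deliver this. Correspondingly, Step 2 never produces a barrier inequality, and the one you display (``$\le\mathfrak g\Psi^m$ fails'') is the opposite of what Step 3 uses. Non-degeneracy is driven by the reaction pushing $u$ away from zero, so the constant is governed by $\mathrm c_0$. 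In the paper, $\kappa$ is fixed by $\kappa^{1-m}<\mathrm c_0\big/\big(\bar\alpha[\Lambda(n+\bar\alpha-2)+\|\mathcal B\|_{L^\infty}+\bar\alpha^{\sigma-1}\|\varrho\|_{L^\infty}]\big)$, whatever the $\mathrm C_0$ in the statement suggests. The fallbacks in your last paragraph (comparing on $\{u>\Psi\}$, rescaling) need the same lower bound, so they do not help.

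The paper's route differs precisely at this point. It centers the barrier $\varphi=\kappa|x-z|^{\bar\alpha}$ at a point $z\in\{u>0\}\cap\mathfrak g^{-1}(0)$, not at an arbitrary $y\in\{u>0\}$, and uses $\mathfrak g(x)\ge\mathrm c_0|x-z|^\mu$. Then $\mathrm c_0|x-z|^\mu\varphi^m$ has exactly the homogeneity $|x-z|^{\bar\alpha-2}$ of $D^2\varphi$, and $\kappa$ is scale-free. So $u$ satisfies $F+\mathscr H-\mathrm c_0|x-z|^\mu(u^+)^m\ge 0$, while $\varphi$ satisfies the strict reverse inequality. The comparison principle (Theorem~\ref{ComPri}) with $\mathfrak c=-\mathrm c_0|x-z|^\mu$ and $\mathcal F(t)=(t^+)^m$ then gives a point of $\partial B_r(z)\cap\{u>0\}$ where $u>\kappa r^{\bar\alpha}$, and one lets $z_j\to x_0$. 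With your arbitrary center $y$, the needed inequality cannot be expected at points $x\in\mathfrak g^{-1}(0)\cap B_{r/2}(y)$ with $x\ne y$: there the right-hand side vanishes, while $F(x,D^2\Psi)\ge\mathcal M^-_{\lambda,\Lambda}(D^2\Psi)>0$. (Be aware that $\mathfrak g\ge\mathrm c_0|x-z|^\mu$ itself follows from $(\mathrm A_2)$ only when $\operatorname{dist}(x,\mathfrak g^{-1}(0))\gtrsim|x-z|$ near $z$, i.e.\ when $z$ is an isolated zero of $\mathfrak g$. Still, some lower bound of this kind is indispensable, and no upper bound can replace it.) Finally, your claim that the Hamiltonian terms are of higher order ``thanks to $\bar\alpha>2$'' is wrong for $\varrho|D\Psi|^\sigma\sim\kappa^\sigma|x-y|^{\sigma(\bar\alpha-1)}$. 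Near the center this term is dominated by $|x-y|^{\bar\alpha-2}$ only if $\sigma\ge\frac{\bar\alpha-2}{\bar\alpha-1}$, which is an extra restriction on $\sigma$. Your reduction of the negative phase to $v=-u$, with $-F(x,-\mathrm X)$ and $-\varrho$, is fine.
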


\subsection{Some examples and further comments}

Concerning our main results, we first emphasize that the structural conditions $(\mathrm{A}_1)-(\mathrm{A}_4)$ are primarily designed to guarantee Schauder-type estimates for solutions of \eqref{1.1} and stability under the blow-up procedure: suitable limits of operators in this class remain within a class for which the corresponding Liouville property holds. Throughout this work, we say that an operator $F$ enjoys the \textbf{Liouville property} if every viscosity solution $u$ of
\[\begin{cases}
F(D^{2}u)=0 \quad \text{in } \mathbb{R}^{n},\\
\displaystyle\lim_{|x|\to\infty}\tfrac{u(x)}{|x|^{2+\alpha_{0}}}=0,
\end{cases}
\]
is a quadratic polynomial. Consequently, our arguments  are not restricted to the particular class characterized by $(\mathrm{A}_1)-(\mathrm{A}_4)$, they also apply to any other class sharing the same stability and Liouville properties. For instance, this includes:
\begin{itemize}
\item[1.] Convex or concave operators; see, for example, Caffarelli-Cabr\'{e}'s book \cite{CC95};
\item[2.] Quasiconvex or quasiconcave operators, see \cite{Gof24};
\item[3.] Uniformly elliptic operators in the plane $\mathbb{R}^{2}$, see for example, \cite{Gof26};
\item[4.] Operators with ``small ellipticity aperture,'' namely,
$\mathfrak{a}_{\mathrm{F}}\defeq \frac{\Lambda}{\lambda}-1\ll 1,$
see \cite{WuNiu23} for the latter class.
\item[5.] Almost linear operators. Precisely, operators $F\in \mathcal{C}^{1}$ such that
\[
|D_{\rm X}F(\rm Z,x)-D_{\rm X}F(\rm Y,x)|\ll 1
\]
for all $x\in B_{1}$ and any symmetric matrices $\mathrm{Z}$ and $\mathrm{Y}$. For this class we refer to \cite{BW21}.
\end{itemize}

We next present several examples that shed light on the role of the exponent $\bar\alpha$, in particular on its optimality and on the limitations of the range of exponents considered in our results.
%\begin{example}
%Under our assumption $(\mathrm{A}_2)$, the parameters $\mu$ and $m$ satisfy $0\leq\mu \leq1-3m$, and consequently, $\bar \alpha \in (2 + \alpha_0,3]$.  The restriction on the exponent $\bar \alpha$, and thus on the values of $\mu$ and $m$, can be illustrated by the following example in the plane $\mathbb{R}^{2}$. Consider the function $u(x)=x_{1}^{3}-x_{1}x_{2}^{2}$ defined in $B_{1}$. Note that $u$ satisfies $|u(0)| = |Du(0)| = |D^2(0)| = 0$, $|D^3u(0)| \neq 0$, and
%\[\operatorname{tr}(\mathfrak{A}(x)D^{2}u)=f(x):=((u^{+})^{\frac{1}{2}}-(u^{-})^{\frac{1}{2}}),\]
%where $\mathfrak{A}$ is defined by\begin{equation*}\mathfrak{A}(x)=
%\begin{cases}
%\mathrm{Id}_{2}, & \mbox{if } x=0,\\[2mm]\mathrm{Id}_{2}+\frac{f(x)}{12|x|^{2}}\begin{pmatrix}x_{1} & -x_{2} \\-x_{2} & -x_{1}\end{pmatrix}, & \mbox{if }x\neq 0.\end{cases}
%\end{equation*}
%Note that $\mathfrak{A}$ is $\mathcal{C}^{0,1/2}$ and uniformly elliptic in $B_{1}$ with the ellipticity constants given by $\lambda=1-\frac{\sqrt{2}}{12}$ and $\Lambda=1+\frac{\sqrt{2}}{12}$. In this case, $\mu = 0, \ m = 1/2, $ and $\bar \alpha = 4$. However, the quotient\[\frac{|u(x)|}{r^{\beta}}=r^{3-\beta} \]is unbounded for $0<r<1$ and $\beta>3$. The Scholium deduced below shows that the estimate fails provided that $|D^3u(0)|\neq 0.$
%\end{example}

\begin{example} Under our assumption $(\mathrm{A}_2)$, the parameters $\mu$ and $m$ satisfy $0\leq\mu \leq1-3m$, and consequently, $\bar \alpha \in (2 + \alpha_0,3]$.  The restriction on the exponent $\bar \alpha$, and thus on the values of $\mu$ and $m$, can be illustrated by the following example in the plane $\mathbb{R}^{2}$. In order to demonstrate the sharpness of the upper bound $\bar\alpha\leq3$ in the borderline regime $\mu=1-3m$, let us  fix $0<m\leq\frac{1}{3}$
and consider the function $
u(x)=x_1^3$ in $B_1$,
satisfying
\[
|u(0)|=|Du(0)|=|D^2u(0)|=0,
\mbox{ and }
|D^3u(0)|\neq0.
\]
Moreover, setting
\[
\mathfrak A=
\begin{pmatrix}
\frac16&0\\
0&1
\end{pmatrix}
\qquad\text{and}\qquad
\mathfrak g(x)=|x_1|^{1-3m},
\]
$\mathfrak A$ is constant and uniformly elliptic, with ellipticity
constants
$\lambda=\frac16,
\
\Lambda=1,$
satisfying
\[
\operatorname{Tr}\big(\mathfrak A D^2u\big)=x_1.
\]
On the other hand,
$(u^+)^m-(u^-)^m
=
\operatorname{sgn}(x_1)|x_1|^{3m},$
and hence
\[
\mathfrak g(x)
\big[(u^+)^m-(u^-)^m\big]
=
|x_1|^{1-3m}
\operatorname{sgn}(x_1)|x_1|^{3m}
=x_1.
\]
Therefore,
\[
\operatorname{Tr}\big(\mathfrak A D^2u\big)
=
\mathfrak g(x)
\big[(u^+)^m-(u^-)^m\big]
\quad \text{in } \ B_1.
\]
Since we are considering the borderline case,
$\mu=1-3m,$ then
$\bar\alpha
=
\frac{2+\mu}{1-m}
=
\frac{3-3m}{1-m}
=3.
$
Finally, 
\[
\frac{\sup_{B_r}|u|}{r^\beta}
=
r^{3-\beta}\to+\infty
\qquad\text{as }r\to0^+
\]
for every $\beta>3=\bar \alpha$. Hence, no growth estimate of order strictly greater than three can hold at a point satisfying
\[
u(0)=|Du(0)|=|D^2u(0)|=0,
\mbox{ and }
|D^3u(0)|\neq0.
\]
This verifies the sharpness of $\bar \alpha$
in the borderline regime $\mu=1-3m$.
\end{example}

\begin{example}
Under our setting, consider also the following profile
\[
u(x):=\operatorname{sgn}(x_1)|x_1|^{\bar\alpha},
\quad x \ \in B_1.
\]
Note that \(u\) is a viscosity solution of
\[
F(x,D^2u)+\mathscr{H}(x,Du)
=
\mathfrak{g}(x)\big((u^+)^m-(u^-)^m\big)
\quad\mbox{in } \ B_1,
\]
where
\[
F(x,\mathrm{X})=\operatorname{Tr}(\mathrm{X}),
\quad \text{and} \quad 
\mathscr{H}(x,\xi):=\langle B,\xi\rangle+\varrho|\xi|^\sigma,
\]
with
$B\equiv b e_1,$ and $
\varrho\equiv\varrho_0,$
where \(b\) and \(\varrho_0\) are non-negative constants, and also
$
\sigma:=\frac{\bar\alpha-2}{\bar\alpha-1}\in(0,1].
$
Moreover, the weight function \(\mathfrak{g}\) is given by
\[
\mathfrak{g}(x)
=
\bar\alpha|x_1|^\mu
\left[
\bar\alpha-1
+b x_1
+\varrho_0\bar\alpha^{\sigma-1}
\operatorname{sgn}(x_1)
\right].
\]
Under the additional condition
$b+\varrho_0\bar\alpha^{\sigma-1}<\bar\alpha-1,$
we have \(\mathfrak{g}\geq0\) in \(B_1\), and
\[
\mathrm{c}_{0}\operatorname{dist}(x,\mathfrak{F})^{\mu}=\mathrm{c}_{0}|x_{1}|^{\mu}\leq \mathfrak{g}(x)\leq \mathrm{C}_0|x_1|^\mu
=
\mathrm{C}_0\operatorname{dist}(x,\mathfrak{F})^\mu,
\]
where $\mathfrak{F}:=\mathfrak{g}^{-1}(0)=\{x_1=0\}$, $\mathrm{c}_{0}=\bar{\alpha}(\bar{\alpha}-1-b-\varrho_{0}\bar{\alpha}^{\sigma-1})$, 
 and $\mathrm{C}_0=\bar\alpha(\bar\alpha-1+b+\varrho_{0}\bar{\alpha}^{\sigma-1}).$
In this case,
\[
\{u>0\}=\{x_1>0\}\cap B_1,
\quad
\{u<0\}=\{x_1<0\}\cap B_1,
\]
and, since \(\bar\alpha>2\),
$Du=D^2u=0
$ on $ \{x_1=0\}\cap B_1.$
Consequently, we deduce
\[
\Gamma_2(u)
=
\{x_1=0\}\cap B_1
=
\mathcal F\cap B_1.
\]
Finally, note that
$
|u(x)|
=
|x_1|^{\bar\alpha}
=
\operatorname{dist}(x,\mathcal F)^{\bar\alpha}, 
$ and that $u\in \mathcal{C}^{2,\beta}(B_1)$, for every $ 0< \beta<\bar\alpha-2.$
\end{example}

Although our examples verify that Theorem \ref{Thm:branching-elliptic} is valid for $\beta$ in a range of $(2+ \alpha_0,3]$, the same argument, under suitable H\"{o}lder assumptions on the data, yields higher-order generalizations of this result for greater values of $\beta$. In fact, for $k\in \mathbb{N}, $ $k\geq 3$, define
\begin{equation*}\mathcal{G}_{k}^{\alpha_0} := \left\{ u \in \mathcal{C}^{0}(B_1) \text{ solving \eqref{1.1} }  \ : \ \|u\|_{\mathcal{C}^{k,\alpha_0}(B_{1/2})} \le \mathrm{K}_0(\mathrm{M}_0, \text{data}) \right\}
\end{equation*}
and
\begin{equation*}
\Gamma_k(u) := \{\vert{}D^iu\vert{}  = 0, \mbox{ for } i=1,\ldots,k\} \cap \partial\{u > 0\} \cap \partial\{u < 0\},
\end{equation*}
the proof of Theorem~\ref{Thm:branching-elliptic} can be adapted directly. Specifically, we have the following Scholium:
\begin{scholium}
Assume that the structural hypotheses $(\mathrm{A}_1)-(\mathrm{A}_4)$ hold with suitable higher H\"{o}lder continuous data. For any $u \in \mathcal{G}_{k}^{\alpha_0}$, assuming $\beta \in \left(k + \alpha_0, \frac{2+\mu}{1-m}\right]$, and $\sigma\in\left[\tfrac{\beta-2}{\beta-1},1\right)\cup(1,2]$, and $x_0 \in \Gamma_k(u) \cap \mathfrak{g}^{-1}(0) \cap B_{1/2}$, then
\[
\sup_{B_r(x_0)}|u|\leq \mathrm{C}r^\beta
\]
for some universal constant $\mathrm{C}>0$.
\end{scholium}

\subsection{Classical foundations and recent advances}

\subsubsection*{Two-phase models: classical results and recent developments}

 Our results are related to the regularity theory for two-phase Alt--Phillips and dead-core problems, as well as to recent developments in higher-order regularity for fully nonlinear elliptic equations. In this subsection, we briefly recall some relevant regularity results that inspire our study. 

Two-phase problems naturally arise in fluid dynamics, jet flows, and cavity models~\cite{Gure66, Tei-Book}. Recent applications include quenching phenomena~\cite{DipKar18, LindPetr08}:$$\Delta u = p\left(\lambda_{+}(u_+)^{p-1}\chi_{\{u>0\}} - \lambda_{-}(u_{-})^{p-1}\chi_{\{u<0\}}\right) \quad \text{in } \Omega \subset \mathbb{R}^n, \quad p \in (0,1),$$obstacle-type problems~\cite{EdquLind09, LSE2009}:$$\Delta u = \lambda_1(x)\chi_{\{u>0\}} - \lambda_2(x)\chi_{\{u<0\}} \quad \text{in } B_1, \qquad \left(\mathrm{M} \ge \sup \lambda_i \ge \inf \lambda_i \ge \frac{1}{\mathrm{M}} > 0\right),$$and dead-core problems~\cite{BdaSSS26, PraRafUrb25}:
$$
\Delta u = (u^{+})^{\gamma} - (u^{-})^{\gamma} \quad \text{in } B_1, \quad \gamma \in (0,1).
$$
While these variational settings allow classical techniques, sign-changing solutions present major technical challenges, as standard tools, such as Harnack inequalities, improvement-of-flatness, and the Strong Maximum Principle, generally fail.

The literature on quasilinear two-phase problems remains sparse, primarily due to the absence of key tools such as Laplacian monotonicity formulas. For the semilinear model
\begin{equation}\label{eqsemilinear}
\Delta u = \lambda_{+}(x)(u^{+})^{q-1} - \lambda_{-}(x)(u^{-})^{q-1} \quad \text{in } B_1,
\end{equation}
Fotouhi and Shahgholian~\cite{Fot-Shah2017} established local regularity for both solutions and free boundaries $\partial\{\pm u > 0\}$, assuming Lipschitz continuous weights $\lambda{\pm}$ and $1 < q < 2$.

In a related vein, Soave and Terracini~\cite{SoaTer18} investigated nodal sets for a particular instance of \eqref{eqsemilinear} with constant positive $\lambda_{\pm}$ and $1 \le q < 2$. They established the finiteness of the vanishing order at every point, characterized the complete spectrum of vanishing orders, proved a weak non-degeneracy property, and obtained regularity and partial stratification results for the nodal set. For the same model, Aghajani~\cite{Agh14} demonstrated that solutions are classical, derived several growth estimates, and characterized free boundary points in terms of homogeneous harmonic polynomials, building upon a foundational result of Caffarelli and Friedman.

\medskip

In the non-variational setting, Bessa and Da Silva \cite{BdaSSS26} recently investigated regularity properties for degenerate quasilinear equations in non-divergence form with Hamiltonian terms:$$\mathcal{Q}_{p, \theta} [u] := \vert{}Du\vert{}^{\theta}\Delta_{p}^{\mathrm{N}}u + \mathscr{H}(Du,x) = \mathscr{F}(u^{+},u^{-},x) \quad \text{in } \Omega \subset \mathbb{R}^{n},$$where $\theta \geq 0$ models the operator's degeneracy. Here, $\Delta_{p}^{\mathrm{N}}$ denotes the normalized $p$-Laplacian, an operator arising in stochastic game theory with strong anisotropic features. The Hamiltonian term $\mathscr{H}$ is assumed to have linear--sublinear growth with respect to the gradient, combining drift and lower-order nonlinear effects (see \eqref{1.2}), while the source term $\mathscr{F}$ incorporates concave-type nonlinearities acting separately on the positive and negative parts of the solution (see \eqref{Assump-RHS}).

{Finally, we should also cite the work of dos Prazeres et al.~\cite{PraRafUrb25}, who investigated the two-phase nonlocal problem
\begin{equation}\label{fracop}
-(-\Delta)^{s}u = (u^{+})^{\gamma} - (u^{-})^{\gamma} \quad \text{in } \mathrm{B}_1,
\end{equation}
where $0 < \gamma < \tfrac{1}{3}$ and $s > 1 - \tfrac{\gamma}{2}$, and established enhanced regularity at branching points. They further examined the integro-differential equation
$$
F(D^{2s}u) = (u^{+})^{\gamma} - (u^{-})^{\gamma} \quad \text{in } \mathrm{B}_1,
$$
where $F$ is fully nonlinear elliptic and $D^{2s}u$ is given by
$$
(D^{2s}u(x))_{ij}=\int_{\mathbb{R}^{n}}\frac{[u(x + y) + u(x - y)- 2u(x)]\langle e_{i},y\rangle\langle e_{j},y\rangle}{|y|^{n+2s+2}},dy.
$$}

{In this setting, they likewise obtained enhanced regularity at branching points. Furthermore, by means of a stability argument as $s \to 1^{-}$, they recovered improved estimates for fully nonlinear elliptic one-phase dead-core problems previously established by Teixeira~\cite{Tei16}.}

These previous contributions motivate our study as the interplay between the two-phase nodal structure, a vanishing spatial weight, and gradient-dependent lower-order terms requires a new approach. Moreover, because the forcing weight can degenerate or vanish on lower-dimensional sets, additional analytical challenges arise. This framework captures a broad class of degenerate elliptic models with nonstandard growth, where degeneracy, gradient dependence, and weighted sources jointly drive the regularity theory.

\subsubsection*{Higher estimates for second-order fully nonlinear PDEs}

In the mid-20th century, Nirenberg~\cite{Nirenberg1953} established H\"older estimates for the Hessian of classical solutions to certain nonlinear elliptic equations in two dimensions. In higher dimensions, Evans~\cite{Evans1982} and Krylov~\cite{Krylov1982} independently proved interior $\mathcal{C}^{2,\beta}$ regularity for viscosity solutions to convex or concave uniformly elliptic equations.

Subsequently, Caffarelli introduced Schauder estimates for inhomogeneous fully nonlinear equations via perturbation and compactness arguments. Specifically, if $F:\operatorname{Sym}(n)\to\mathbb{R}$ is convex, uniformly elliptic, and $F({\bf O_n})=0$, then viscosity solutions to$$F(D^2u)=f(x)\quad\text{in }B_1,$$with $f\in \mathcal{C}^{0,\alpha}(B_1)$, satisfy $u\in \mathcal{C}^{2,\min\{\alpha_0,\alpha\}}(B_{1/2})$ with$$\Vert{}u\Vert{}_{\mathcal{C}^{2,\min\{\alpha_0,\alpha\}}(B_{1/2})} \leq \mathrm{C}\left(\Vert{}u\Vert{}_{L^\infty(B_1)} +\Vert{}f\Vert{}_{\mathcal{C}^{0,\alpha}(B_1)}\right),$$where $\alpha_0\in(0,1)$ depends only on $n,\lambda,\Lambda$. %Thus, the operator convexity is essential for obtaining $\mathcal{C}^{2,\alpha}$ regularity.
Later, Huang~\cite{Huang2002} showed that if $F \in \mathcal{C}^1$ satisfies a Liouville property, any $\mathcal{C}^{1,1}$ viscosity solution to $F(D^2u)=0$ belongs to $\mathcal{C}^{2,\alpha}$ for all $\alpha \in (0,1)$. We must also highlight that Cabré and Caffarelli~\cite{CabreCaff2003} established local $\mathcal{C}^{2,\alpha}$ estimates under conditions weaker than convexity or concavity. Furthermore, Monneau~\cite[Proposition 9.1]{Monneau2009} established pointwise \(\mathcal{C}^{2,\alpha}\) estimates in the \(L^p\) setting for solutions to $F(D^2u)=0$, assuming $F \in \mathcal{C}^2$ alongside a pointwise $\mathcal{C}^2$-Dini condition, bypassing any convexity assumptions.

In a more general setting, for nearly two decades, the existence of a universal $\mathcal{C}^2$ \textit{a priori} theory for arbitrary fully nonlinear elliptic operators remained open, until Nadirashvili and Vlăduţ~\cite{NV2007,NV2008} constructed counterexamples to $\mathcal{C}^{1,1}$ regularity. This motivated the search for structural or qualitative assumptions on $F$ and $u$ that guarantee higher regularity. A substantial advance came from Savin~\cite{Savin2007}, who proved interior $\mathcal{C}^{2,\alpha}$ estimates for \textit{flat} viscosity solutions to $F(D^2u,Du,u,x)=0$, assuming $F$ is uniformly elliptic and smooth near $(0,0,0,x)$. Later, dos Prazeres and Teixeira~\cite{dosPrazTei2016} extended this flat regularity framework to non-convex equations $F(D^2u,x)=f(x)$ with $\mathcal{C}^{0,\alpha}$ data, showing that if $\Vert{}u\Vert{}_{L^\infty(B_1)} \le \delta_0$ for a small universal constant $\delta_0 > 0$, then $u \in \mathcal{C}^{2,\alpha}(B_{1/2})$ with $\Vert{}u\Vert{}_{\mathcal{C}^{2,\alpha}(B_{1/2})} \le \mathrm{K}_0\delta_0.$ Thus, higher regularity can be recovered for non-convex operators under H\"older continuity of the data and a smallness condition.

Furthermore, Bhattacharya and Warren~\cite{BW21}  established explicit interior $\mathcal{C}^{2,\alpha}$ estimates for viscosity solutions to fully nonlinear equations whose operators are \textit{almost linear}, satisfying
$$
|D_XF(\mathrm{M})-D_XF(\mathrm{N})|\leq\varepsilon \qquad \text{for all } \mathrm{M},\mathrm{N}\in\operatorname{Sym}(n).
$$
For $f\in \mathcal{C}^{0,\alpha}(B_1)$, they obtained$$\Vert{}u\Vert{}_{\mathcal{C}^{2,\alpha}(B_{1/2})} \le \mathrm{C}\left(\Vert{}u\Vert{}_{L^\infty(B_1)} + \Vert{}f\Vert{}_{\mathcal{C}^{0,\alpha}(B_1)}\right).$$ More recently, Goffi~\cite{Gof24} proved the same estimate {as above}, for a non-convex $F:\operatorname{Sym}(2)\to\mathbb{R}$, uniformly elliptic,   generalizing Nirenberg's 2D result to viscosity solutions.

Regarding boundary regularity, Krylov~\cite{Krylov1982,Krylov1983} established existence, uniqueness, and $\mathcal{C}^{2,\alpha}$ regularity up to the boundary for fully nonlinear elliptic problems with positively homogeneous operators (such as $F=\inf_j F_j$), applying in particular to Hamilton--Jacobi--Bellman and Monge--Ampère equations. Safonov~\cite{Safonov1989} extended this framework to general Dirichlet problems $$F(D^2u,Du,u,x)=0\quad \mbox{ in } \Omega \quad \mbox{with } u=g \quad \mbox{ on } \partial\Omega,$$ obtaining $\mathcal{C}^{2,\alpha}(\overline{\Omega})$ solutions for $\mathcal{C}^{2,\alpha}$ boundary data and domains. 
Subsequently, Silvestre and Sirakov~\cite{SilSir2014} proved boundary $\mathcal{C}^{2,\alpha}$-type estimates for non-convex, non-concave equations $F(D^2u,Du,x)=f(x)$. Using sup-convolutions and suitable structural conditions, they showed that the boundary gradient and Hessian admit \(\mathcal{C}^{0,\alpha}\) representatives. In particular, near each $x_0 \in \partial\Omega$, the solutions admit the pointwise boundary expansion $$u(x) = u(x_0) + \mathscr{G}(x_0)\cdot(x-x_0) + \frac12\mathbb{H}(x_0)(x-x_0)\cdot(x-x_0) + O(\vert{}x-x_0\vert{}^{2+\alpha}),$$  where $\mathscr{G} \in \mathcal{C}^{0,\alpha}(\partial\Omega,\mathbb{R}^n)$ and $\mathbb{H} \in \mathcal{C}^{0,\alpha}(\partial\Omega,\operatorname{Sym}(n))$, confirming boundary second-order regularity  in the absence of convexity or concavity.

%%%%%%%%%%%%%%%%%%%%%%%%%%%%%%%%%%%%%%%%%%%%%%%%%%%%%%%%%%

\paragraph{Organization of the paper.}

The remainder of the paper is organized as follows. In Section~\ref{Sec2}, we collect the notation, definitions, and auxiliary results that will be used throughout the paper. In Section~\ref{Sec3}, we prove the improved growth estimate at higher-order singular nodal points, which is the main result of the paper. In Section~\ref{Sec4}, we establish several consequences and further applications of this estimate, including sharp gradient growth, $L^p$-average estimates for the Hessian, and a Liouville-type result. Section~\ref{Sec5} is devoted to boundary estimates at higher-order singular nodal points. Finally, in Section~\ref{Sec6}, we investigate further properties of two-phase solutions, proving flipping estimates and a non-degeneracy result.

\section{Preliminaries}\label{Sec2}

The following definition introduces viscosity solutions:

\begin{definition}[{\bf Viscosity Solutions \cite[Remark 2.2]{daSN2021}}]
A continuous function $u \in \mathcal{C}^0(B_1)$ is said to be a \textit{viscosity subsolution} to \eqref{1.1} in $B_1$ if for any $x_{0}\in B_{1}$ and $\varphi\in \mathcal{C}^{2}(B_{1})$ such that $u - \varphi$ has a local maximum at $x_0$, the following inequality holds:
\[
F( D^2 \varphi(x_0),x_0) + \mathscr{H}(x_0,D\varphi(x_{0}))\leq \mathscr{F}(x_0,u^{+}(x_0),u^{-}(x_0)).
\]
Similarly, $u$ is called a \textit{viscosity supersolution} to \eqref{1.1} in $B_1$ if for any $x_{0}\in B_{1}$ and $\varphi\in \mathcal{C}^{2}(B_{1})$ such that $u - \varphi$ has a local minimum at $x_{0}$, one has:
\[
F(D^2 \varphi(x_0),x_0) + \mathscr{H}(x_0,D\varphi(x_{0}))\geq \mathscr{F}(x_0,u^{+}(x_0),u^{-}(x_0)).
\]
We say that $u$ is a \textit{viscosity solution} to \eqref{1.1} if it is both a viscosity subsolution and supersolution.
\end{definition}

We now state an interior regularity result, which provides a H\"{o}lder estimate for the gradient of viscosity solutions under suitable structural and integrability conditions.
%\hspace{1cm}
\medskip
\begin{theorem}[{\bf H\"{o}lder Gradient Estimate \cite[Theorem 1.6]{daSN2021}}]\label{C1alpharegularity}
Assume $(\mathrm{A}_1)-(\mathrm{A}_3)$. Let $f \in L^p(\Omega)\cap \mathcal{C}^0(\Omega)$, where $p > n$, and $\Omega \subset \mathbb{R}^n$ be a bounded domain. Let $u$ be a bounded viscosity solution of 
$$
F(D^{2}u,x) +  \langle \mathcal{B}(x), Du\rangle = f(x) \quad \text{in} \quad B_1 \subset \mathbb{R}^n.
$$ Then, there exists $\alpha \in (0,1)$ and $\theta = \theta(\alpha)$, depending on $n, p, \lambda, \Lambda$, and $\|\mathcal{B}\|_{L^{\infty}(B_1)}$ such that if 
$$
\left(\intav{B_r({0})} \left(\Phi_F(x)\right)^pdx\right)^{1/p}\leq \theta
$$
 holds for all $r \leq \min\{r_0(\theta), \operatorname{dist}(\Omega', \partial\Omega)\}$, for some $r_0 = r_0(\theta) > 0$, then $u \in \mathcal{C}^{1,\alpha}_{\text{loc}}(\Omega)$, and
\begin{equation*} \label{eq:1.9}
    \|u\|_{\mathcal{C}^{1,\alpha}(\overline{\Omega'})} \leq \mathrm{C}\, (\left\| u\right\|_{L^{\infty }(\Omega )} + \|f\|_{L^p(\Omega)}), 
\end{equation*}
for any subdomain $\Omega' \subset\subset \Omega$, where $\mathrm{C}>0$ depends only on $r_0, n, p, \lambda, \Lambda, \alpha, \operatorname{diam}(\Omega), \|\mathcal{B}\|_{L^{\infty}(B_1)}$, and $\operatorname{dist}(\Omega', \partial\Omega)$.
\end{theorem}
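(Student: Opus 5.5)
The plan is a Caffarelli-type compactness and iteration argument in the $L^p$-viscosity framework of Escauriaza and Świech. This is the standard route to the estimate recalled from \cite[Theorem 1.6]{daSN2021}. First I normalize the data. Consider $v(x)=u(x_1+rx)/\kappa$ with $\kappa=\|u\|_{L^\infty(\Omega)}+\varepsilon^{-1}\|f\|_{L^p(\Omega)}$. This $v$ solves an equation of the same type, with operator $F_r(\mathrm{X},x)=\frac{r^2}{\kappa}F(\kappa r^{-2}\mathrm{X},x_1+rx)$, drift $r\mathcal{B}(x_1+rx)$ and source $\frac{r^2}{\kappa}f(x_1+rx)$. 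After this rescaling $\|v\|_{L^\infty(B_1)}\le 1$, $\|r\mathcal{B}\|_{L^\infty}\le \varepsilon$, and the source has $L^p(B_1)$-norm at most $\varepsilon$, because $r^{2-n/p}\to0$ when $p>n$. The oscillation quantity $\Phi_{F_r}$ keeps the same bound $\theta$ on averages, since it is scale invariant. So it suffices to prove a $\mathcal{C}^{1,\alpha}$ estimate at the origin for normalized solutions with small drift, small source and small oscillation of coefficients.

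The key step is an approximation lemma. For every $\delta>0$ there exist $\varepsilon,\theta>0$ with the following property: if $\|v\|_{L^\infty(B_1)}\le1$, $\|\mathcal{B}\|_\infty+\|f\|_{L^p(B_1)}\le\varepsilon$ and $(\intav{B_1}\Phi_F^p)^{1/p}\le\theta$, then there is a solution $h$ of the frozen equation $F(D^2h,0)=0$ in $B_{3/4}$ with $\|v-h\|_{L^\infty(B_{1/2})}\le\delta$. I would prove this by contradiction. A failing sequence $v_k$ is equicontinuous up to the boundary by the Krylov--Safonov Hölder estimates for $L^p$-viscosity solutions, which hold with bounded drift, so it converges uniformly. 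The $F_k(\cdot,0)$ converge locally uniformly by uniform ellipticity. The stability theorem for $L^p$-viscosity solutions, which uses the $L^p$-smallness of $\Phi_{F_k}$, $f_k$ and $\mathcal{B}_k$, then shows that the limit solves $F_\infty(D^2h,0)=0$, a contradiction. By Caffarelli's interior estimate \cite{CC95}, $h\in\mathcal{C}^{1,\bar\alpha}$ with universal bounds.

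Next I iterate. Fix $\alpha<\min\{\bar\alpha,1-n/p\}$. Choose $\rho$ small so that $C\rho^{1+\bar\alpha}\le\frac12\rho^{1+\alpha}$, then choose $\delta=\frac12\rho^{1+\alpha}$. The lemma then yields an affine $\ell_1$ with $\sup_{B_\rho}|v-\ell_1|\le\rho^{1+\alpha}$. By induction I aim to build affine functions $\ell_k$ with $\sup_{B_{\rho^k}}|v-\ell_k|\le\rho^{k(1+\alpha)}$ and $|\ell_{k+1}-\ell_k|$ controlled geometrically. For the inductive step I rescale, setting $w_k(x)=(v-\ell_k)(\rho^kx)/\rho^{k(1+\alpha)}$. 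The affine part contributes an extra term $\langle\rho^k\mathcal{B},D\ell_k\rangle$, which I move into the source; it is bounded because $|D\ell_k|$ is uniformly bounded by the geometric series. The new source has $L^p$-norm bounded by $\rho^{k(1-\alpha-n/p)}\varepsilon$, which stays at most $\varepsilon$ by the choice of $\alpha$. The new drift is $\rho^k\mathcal{B}$, which is even smaller. The averaged oscillation hypothesis holds at every scale $r\le r_0$, so $w_k$ again satisfies the lemma's assumptions. Standard arguments then give $\ell_k\to\ell$ and $|v-\ell|\le C|x|^{1+\alpha}$. Applying this at every point of $\Omega'$ and combining with interior $L^\infty$ bounds gives the $\mathcal{C}^{1,\alpha}(\overline{\Omega'})$ estimate with the stated dependence.

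I expect the main obstacle to be the approximation lemma in the $L^p$-viscosity setting. Since $f$ is only in $L^p$ and $x\mapsto F(\mathrm{X},x)$ is only controlled in averaged form, passing to the limit requires the Caffarelli--Crandall--Kocan--Świech stability result. Its application rests on the ABP estimate with $L^p$ right-hand side, which is valid only for $p>n$ (more precisely $p>n-\varepsilon_0$), and on carefully absorbing the drift into the Pucci bounds. The second delicate point is keeping all constants uniform across scales. The strict inequality $\alpha<1-n/p$ is what keeps the rescaled sources from blowing up, and the affine correction interacting with $\mathcal{B}$ must be shown to stay within the small-data regime at every step.
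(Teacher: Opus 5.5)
The paper gives no proof of this statement. It is quoted from \cite[Theorem 1.6]{daSN2021} and used as a black box (in the proof of Corollary~\ref{Cor:SharpGradientGrowth}), so there is no in-paper argument to compare with. Your sketch follows the standard Caffarelli--\'Swiech route on which estimates of this kind rest: normalization to small data, an approximation lemma via compactness and $L^p$-viscosity stability, then a $\rho^{k(1+\alpha)}$ iteration with affine corrections. It is sound in outline. Your scaling checks are right: the averaged $\Phi_F$ is invariant under $F_r(\mathrm{X},x)=\frac{r^2}{\kappa}F(\kappa r^{-2}\mathrm{X},x_1+rx)$, and $\alpha<1-n/p$ is exactly what keeps the factor $\rho^{k(1-\alpha-n/p)}$ on the rescaled source from growing.

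A few points need tightening.
\begin{enumerate}
\item Krylov--Safonov only gives interior equicontinuity, not equicontinuity up to the boundary. Uniform convergence on $\overline{B_{7/8}}$ is all you need.
\item In the compactness argument, the limit solves $F_\infty(D^2h)=0$, where $F_\infty$ is a limit of the frozen operators $F_k(\cdot,0)$. It is not the frozen operator of any member of the sequence, so the lemma as you stated it is not literally contradicted. There are two easy fixes.
\begin{itemize}
\item State the conclusion as $G(D^2h)=0$ for some $(\lambda,\Lambda)$-elliptic $G$ with $G(\mathbf{O}_n)=0$. The iteration only uses the universal $\mathcal{C}^{1,\bar\alpha}$ bound on $h$, which holds for every such $G$.
\item Alternatively, solve $F_k(D^2h_k,0)=0$ in $B_{3/4}$ with $h_k=v_\infty$ on $\partial B_{3/4}$. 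Boundary equicontinuity and uniqueness for $F_\infty$ then give $h_k\to v_\infty$.
\end{itemize}
\item The affine term $\rho^{k(1-\alpha)}\langle\mathcal{B}(\rho^k x),D\ell_k\rangle$ must be at most a fixed fraction of $\varepsilon$, not merely bounded. This works out as follows.
\begin{itemize}
\item The estimate $|D\ell_{k+1}-D\ell_k|\le C\rho^{k\alpha}$ gives $|D\ell_k|\le C_\ast$ with $C_\ast$ universal.
\item So fix $\rho$, $\alpha$, $\delta$, $\varepsilon$ and $C_\ast$ first.
\item Only then choose the initial radius $r$ so that $\|r\mathcal{B}\|_\infty$ is small in terms of $\varepsilon$ and $C_\ast$, reserving the rest of the $\varepsilon$-budget for the source through the choice of $\kappa$.
\end{itemize}
\item Your argument uses smallness of the averages of $\Phi_F(\cdot,x_1)$ on $B_r(x_1)$ for every center $x_1\in\Omega'$. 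That is the correct reading of the cited theorem. The hypothesis as transcribed here, with balls centered at $0$ only, would not give a bound on all of $\overline{\Omega'}$.
\end{enumerate}
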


In the context of Sobolev spaces, $W^{2,p}$ estimates are available for solutions of \eqref{1.1} with $p>n$ when the operator $F$ is convex or concave.

\begin{theorem}[{\bf $W^{2,p}$ estimates \cite[Theorem 6.7]{daSN2021}}]\label{W2pest}
Let $u$ be a viscosity solution of
\[
F(x,D^{2}u)+\mathscr{H}(x,Du)=f(x)\quad \text{in} \quad B_{1}\subset\mathbb{R}^{n}.
\]
Assume the structural hypotheses $\rm (A_{1})-(A_{3})$ hold and that $F$ is either convex or concave. If
$f\in L^{p}(B_{1})\cap \mathcal{C}^{0}(B_{1})$ for some $n<p<\infty$, then
$u\in W^{2,p}(B_{1/2})$ and the following estimates hold:
\begin{itemize}
\item[(i)] If $\sigma\in(0,1)$, then
\[
\|u\|_{W^{2,p}(B_{1/2})}\leq \mathrm{C}\left(\|u\|_{L^{\infty}(B_{1})} +\|f\|_{L^{p}(B_{1})}+\|\varrho\|_{L^{\infty}(B_{1})}^{\frac{1}{1-\sigma}}\right),
\]
where $\mathrm{C}>0$ depends only on $n$, $\lambda$, $\Lambda$, $p$, $\sigma$, and $\|\mathcal{B}\|_{L^{\infty}(B_{1};\mathbb{R}^{n})}$.

\item[(ii)] If $\sigma\in(1,2]$, then
\[
\|u\|_{W^{2,p}(B_{1/2})}\leq \mathrm{C}\left(\|u\|_{L^{\infty}(B_{1})}+\|f\|_{L^{p}(B_{1})}\right),
\]
where $\mathrm{C}>0$ depends only on $n$, $\lambda$, $\Lambda$, $p$, $\sigma$, $\|\mathcal{B}\|_{L^{\infty}(B_{1};\mathbb{R}^{n})}$, and additionally on
$\|\varrho\|_{L^{\infty}(B_{1})}$.
\end{itemize}
\end{theorem}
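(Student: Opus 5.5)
The plan is to reduce the equation to one without the nonlinear gradient term, namely
\[
F(x,D^2u)+\langle \mathcal{B}(x),Du\rangle=\tilde f(x):=f(x)-\varrho(x)|Du|^{\sigma},
\]
and then apply the Caffarelli $W^{2,p}$ theory for convex/concave operators with $\mathcal{C}^0$-small oscillation in $x$. This theory is guaranteed by $(\mathrm{A}_3)$ after a standard rescaling $u_r(x)=u(rx)/r^2$, which makes $\Phi_F$ small and also makes $\|\mathcal{B}\|_\infty$ small. The whole difficulty is to show that $\tilde f\in L^p(B_{3/4})$ with the stated bounds, and this amounts to an a priori $L^\infty$ (or $L^p$) control of $Du$.

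\textbf{Step 1: the sublinear regime $\sigma\in(0,1)$.} Young's inequality gives
\[
|\varrho||\xi|^\sigma\le |\xi|+\mathrm{C}_\sigma\|\varrho\|_\infty^{\frac{1}{1-\sigma}}.
\]
Hence $u$ is a viscosity solution of the two Pucci-type inequalities
\[
\mathcal{M}^{\pm}(D^2u)\pm(\|\mathcal{B}\|_\infty+1)|Du|\gtrless \mp\bigl(|f|+\mathrm{C}\|\varrho\|_\infty^{1/(1-\sigma)}\bigr),
\]
so $u$ belongs to the class $S(\lambda,\Lambda,\gamma,\cdot)$ with linear drift. I then invoke Theorem~\ref{C1alpharegularity}, or its $S$-class version due to Świech. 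Concretely, I freeze $\tilde f$ as a continuous function: since $u$ is already known to be $\mathcal{C}^{1}$ a posteriori, $\tilde f$ is continuous, and Theorem~\ref{C1alpharegularity} applies to the linear-drift equation with right-hand side $\tilde f$. The $\mathcal{C}^{1,\alpha}$ estimate yields
\[
\|Du\|_{L^\infty(B_{3/4})}\le \mathrm{C}\bigl(\|u\|_\infty+\|f\|_{L^p}+\|\varrho\|_\infty^{1/(1-\sigma)}\bigr).
\]
To avoid circularity, I will absorb the term $\|Du\|_\infty^\sigma$ that appears on the right-hand side. This is done either by interpolation $\|Du\|_\infty\le \varepsilon[Du]_{\alpha}+\mathrm{C}_\varepsilon\|u\|_\infty$, or, more cleanly, by using the $S$-class $\mathcal{C}^{1,\alpha}$ estimate directly. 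Plugging this bound into $\tilde f$ gives $\|\tilde f\|_{L^p}$ bounded by the right-hand side of (i). Caffarelli's $W^{2,p}$ estimate then concludes, after a covering argument to pass from small balls to $B_{1/2}$.

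\textbf{Step 2: the superlinear regime $\sigma\in(1,2]$.} Young's inequality no longer linearizes the gradient term, and this is where I expect the main obstacle. First I will prove a local Lipschitz bound depending on $\|u\|_\infty$, $\|f\|_{L^p}$ and $\|\varrho\|_\infty$. For this I would use the Ishii--Lions doubling-variables method with the concave modulus $\varphi(t)=t-\kappa t^{1+\gamma}$. The term $|\varrho||\xi|^\sigma$ with $\sigma\le2$ is controlled by the strong negative second-order contribution $\varphi''$ of the test function in the direction of $x-y$; this is the standard subquadratic/natural-growth case, and it explains the dependence of the constant on $\|\varrho\|_\infty$. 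Once $\|Du\|_{L^\infty(B_{3/4})}$ is bounded, the term $\varrho|Du|^\sigma$ is bounded and continuous. Theorem~\ref{C1alpharegularity} and the same $W^{2,p}$ step as before then give (ii), with no extra additive term in $\varrho$, since the bound is multiplicative through the constant.

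Finally, I must ensure the smallness hypothesis on $\Phi_F$ and the continuity of $\tilde f$ survive the rescalings used. Under $u_r(x)=u(rx)/r^2$ the drift becomes $r\mathcal{B}$, and the gradient term becomes $r^{2-\sigma}\varrho|Du_r|^{\sigma}r^{\sigma-1}$, which is harmless since $r\le1$. The estimate on $B_{1/2}$ is then recovered by scaling back and covering.
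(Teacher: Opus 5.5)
The paper does not prove this statement; it is quoted from da Silva--Nornberg (Theorem 6.7), so your argument has to stand on its own. It has a genuine gap at the one step that carries the weight: the gradient bound.

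\textbf{Step 1 is circular.} Your final step (move $\varrho|Du|^\sigma$ to the right-hand side and apply the Caffarelli--Swiech $W^{2,p}$ estimate for convex/concave operators with linear drift) is fine \emph{once} $u\in\mathcal{C}^{1}$ with a quantitative bound is known. In Step~1, however, that is assumed rather than proved.
\begin{itemize}
\item Theorem~\ref{C1alpharegularity} concerns solutions of an equation with linear drift and a \emph{given} continuous right-hand side. The function $u$ is a viscosity solution of such an equation with right-hand side $\tilde f=f-\varrho|Du|^\sigma$ only if $u$ is already $\mathcal{C}^1$, so that every test function touching $u$ at $x_0$ has gradient $Du(x_0)$. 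Invoking this ``a posteriori'' assumes the conclusion.
\item The interpolation/absorption of $\|Du\|_\infty^\sigma$ is an a priori estimate that presupposes $[Du]_{\alpha}<\infty$. It would also have to be run on nested balls, since the right-hand side involves $Du$ on the larger set.
\item The fallback is not available either. The Pucci inequalities you get from Young's inequality place $u$ in the class $S$, which yields Krylov--Safonov $\mathcal{C}^{0,\alpha}$ and Harnack estimates but no $\mathcal{C}^{1,\alpha}$ estimate; $\mathcal{C}^{1,\alpha}$ regularity requires the equation itself.
\end{itemize}

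What is missing is a $\mathcal{C}^{1,\alpha}$ estimate for the full equation \emph{with} the Hamiltonian, proved directly for the viscosity solution, e.g.\ by compactness. Normalize $v=u/K$ with $K=\|u\|_{L^\infty}+\|f\|_{L^p}+\|\varrho\|_{L^\infty}^{1/(1-\sigma)}$, so that the new coefficient $K^{\sigma-1}\varrho$ is bounded by $1$; this is exactly where the additive term in (i) comes from. Then iterate with $v_k(x)=\big(v(r^kx)-\ell_k(r^kx)\big)/r^{k(1+\alpha)}$, in which the Hamiltonian is multiplied by a positive power of $r^k$ and vanishes in the approximating problem. Only after this is freezing $\tilde f$ legitimate.

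\textbf{Step 2 has further problems.}
\begin{itemize}
\item The Ishii--Lions method does not run under $(\mathrm{A}_3)$. The matrices produced by the theorem of sums have size of order $L/|x-y|$, so the $x$-dependence of $F$ contributes a term of order $\tau(|x-y|)\,L/|x-y|$. The concavity term, of order $\kappa L|x-y|^{\gamma-1}$ for $\varphi(t)=t-\kappa t^{1+\gamma}$, absorbs this only if $\tau$ decays quantitatively at $0$ (H\"older, or at least Dini). A bare modulus of continuity is not enough.
\item The method also uses pointwise control of $f$, whereas the estimate must depend only on $\|f\|_{L^p}$.
\item Even granting a Lipschitz bound $L=L(\|u\|_{L^\infty},\|f\|_{L^p},\|\varrho\|_{L^\infty})$, it is nonlinear in the data. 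So $\|\varrho\|_{L^\infty}L^\sigma\le \mathrm{C}\big(\|u\|_{L^\infty}+\|f\|_{L^p}\big)$, with $\mathrm{C}$ depending only on the quantities listed in (ii), does not follow; ``multiplicative through the constant'' is precisely what needs proof.
\item Normalizing by $K=\|u\|_{L^\infty}+\|f\|_{L^p}$ gives the coefficient $K^{\sigma-1}\varrho$, which is controlled by $\|\varrho\|_{L^\infty}$ only when $K\le1$. Large data therefore require a separate, data-independent gradient bound, or a constant that also depends on an upper bound for $K$.
\end{itemize}

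A minor slip as well: under $u(rx)/r^2$ the gradient coefficient becomes $r^{\sigma}\varrho$, not $r\varrho$. This is harmless since $r\le1$.
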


Now, recall that viscosity solutions satisfy the following stability property (see, for instance, \cite[Theorem 3.8]{CCKS}).

\begin{proposition}\label{Stability-Prop}
Let $(F_k)_{k \in \mathbb{N}}$ be a sequence of fully nonlinear elliptic operators with ellipticity constants $0< \lambda \leq \Lambda$. Let $(u_k)_{k \in \mathbb{N}} \subset \mathcal{C}^0(\Omega)$ be viscosity solutions to
\[
F_k(D^2 u_k, x) +\langle \mathcal{B}_k(x),Du_k\rangle + \varrho_k(x)|Du_k|^{\sigma}= f_k(x) \quad \text{in } \Omega,
\]
where $(f_k)_{k \in \mathbb{N}}$ is a sequence of continuous functions and $(\mathcal{B}_k)_{k \in \mathbb{N}}$ is a sequence of continuous vector fields. Suppose further that $F_k \to F$ locally uniformly in $\text{Sym}(n)$, and that $u_k\to u$ locally uniformly in $\Omega$, $\mathcal{B}_k\to \mathcal{B}$, $\varrho_ k \to \varrho$, and $f_k \to f$ a.e. in $\Omega$. Then,
\[
F(D^2 u, x) +\langle \mathcal{B}(x),Du\rangle + \varrho(x)|Du|^{\sigma} = f(x) \quad \text{in } \Omega
\]
in the viscosity sense.
\end{proposition}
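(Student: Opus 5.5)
The plan is the classical test-function argument for stability of viscosity solutions. I will prove only the subsolution inequality; the supersolution case is symmetric, obtained by reversing the inequalities and replacing maxima with minima.

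\textbf{Reduction to strict touching.} Fix $x_0\in\Omega$ and $\varphi\in\mathcal{C}^2$ such that $u-\varphi$ has a local maximum at $x_0$. Replacing $\varphi$ by $\varphi+|x-x_0|^4$, which changes neither $D\varphi(x_0)$ nor $D^2\varphi(x_0)$, I may assume the maximum is strict on some closed ball $\overline{B_\delta(x_0)}\subset\Omega$.

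\textbf{Transfer to the approximating problems.} Since $u_k\to u$ uniformly on $\overline{B_\delta(x_0)}$, the functions $u_k-\varphi$ attain their maximum over $\overline{B_\delta(x_0)}$ at points $x_k$. Strictness of the maximum of $u-\varphi$ forces $x_k\to x_0$, so for $k$ large $x_k$ is an interior local maximum. The viscosity subsolution property of $u_k$ then gives
\[
F_k(D^2\varphi(x_k),x_k)+\langle\mathcal{B}_k(x_k),D\varphi(x_k)\rangle+\varrho_k(x_k)|D\varphi(x_k)|^{\sigma}\le f_k(x_k).
\]

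\textbf{Passing to the limit.} The first term converges to $F(D^2\varphi(x_0),x_0)$: the arguments $(D^2\varphi(x_k),x_k)$ stay in a compact set, $F_k\to F$ locally uniformly there, and $F$ is continuous. Also $D\varphi(x_k)\to D\varphi(x_0)$, and $\xi\mapsto|\xi|^\sigma$ is continuous. It remains to show $\mathcal{B}_k(x_k)\to\mathcal{B}(x_0)$, $\varrho_k(x_k)\to\varrho(x_0)$ and $f_k(x_k)\to f(x_0)$. This yields the desired inequality at $x_0$.

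\textbf{Main obstacle.} The hard step is exactly this last convergence. Convergence only a.e.\ does not control the values at the moving points $x_k$, which may all lie in the exceptional null set.

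I would handle it in one of two ways. The first, and preferred, is to observe that in every application in this paper (the blow-up sequences) the rescaled coefficients are uniformly H\"older continuous with uniform bounds. By Arzel\`a--Ascoli, a.e.\ convergence then upgrades to locally uniform convergence to continuous limits, and locally uniform convergence with continuous limits gives the needed pointwise convergence along $x_k\to x_0$. I would state this as the operative hypothesis.

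The second way, for genuinely merely a.e.\ or $L^p$ convergent data, is to work in the $L^p$-viscosity framework of \cite[Theorem 3.8]{CCKS}. There one argues by contradiction: if the inequality fails at $x_0$, then $\varphi$ plus a small correction solves a strict Pucci-type inequality with $L^p$-small error. The ABP estimate then applies to $u_k-\varphi$ and contradicts the strict maximum. The key ingredient is that $\|f_k-f\|_{L^p}$ and the analogous norms for the coefficients tend to zero locally, which follows from a.e.\ convergence plus uniform $L^\infty$ bounds via dominated convergence.
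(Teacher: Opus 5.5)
Your argument is sound. The ``main obstacle'' you isolate is a defect of the statement, not of your proof: with mere a.e.\ convergence, no uniform bounds and no continuity of the limits, the proposition is false, so strengthening the hypotheses is a necessary repair.

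For instance, $u_k\equiv0$, $f_k\equiv0$, $F_k=F=\operatorname{Tr}$, $\mathcal B_k=\varrho_k=0$ and $f=\chi_{\{0\}}$ satisfy all the hypotheses. Yet $u\equiv0$ violates one of the two viscosity inequalities at the origin (test with $\varphi\equiv0$), so the limits must at least be replaced by continuous representatives, as in your first route.

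More seriously, let $n=1$, $\Omega=(-1,1)$, $F_k(\mathrm X,x)=\mathrm X$, $\mathcal B_k=\varrho_k=0$, and $f_k(x)=k\eta(kx)$ with $0\le\eta\in\mathcal C^\infty_c(-1,1)$, $\int\eta=1$. Set $u_k(x)=\int_{-1}^x\int_{-1}^t f_k(s)\,ds\,dt$. Then $u_k''=f_k$ classically, $f_k(x)\to0$ for every $x\neq0$, and $\|u_k-x^+\|_{L^\infty(-1,1)}\le 2/k$. But $x^+$ is not affine, hence it solves $u''=0$ neither in the $\mathcal C$- nor in the $L^p$-viscosity sense. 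So the uniform control you impose cannot be dispensed with: uniform H\"older bounds in your first route, and uniform $L^\infty$ bounds giving $L^p_{\rm loc}$ convergence in your second.

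As for the comparison, the paper gives no argument here, only a pointer to \cite[Theorem 3.8]{CCKS}, i.e.\ your second route. Three points about that result:
\begin{itemize}
\item It is an $L^p$-viscosity statement whose actual hypothesis is $L^p_{\rm loc}$ convergence of $F_k(D^2\varphi,x)+\langle\mathcal B_k,D\varphi\rangle+\varrho_k|D\varphi|^\sigma-f_k$ for $W^{2,p}$ test functions $\varphi$, not a.e.\ convergence.
\item To apply it you must also know that the $\mathcal C$-viscosity solutions $u_k$ are $L^p$-viscosity solutions, which is true for continuous data.
\item Its structure conditions have linear growth in the gradient, so the case $\sigma\in(1,2]$ needs later extensions of that theory.
\end{itemize}

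In its blow-up arguments, however, the paper invokes \cite[Section 6]{CIL92}, which is your first route. There everything converges locally uniformly. The rescaled operators converge locally uniformly in $(x,\mathrm X)$, at least along subsequences; this joint convergence, with a continuous limit, is how the hypothesis on $F_k$ should be read for your limit step. All lower-order terms tend to $0$ in $L^\infty_{\rm loc}$ by the explicit sup-norm bounds derived there, so even your Arzel\`a--Ascoli upgrade is unnecessary.

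Your first route is elementary and yields the $\mathcal C$-viscosity conclusion in the sense of Section~\ref{Sec2}. It is insensitive to the growth of $\xi\mapsto|\xi|^\sigma$ and covers every use of the proposition in the paper. The second route is the price of allowing genuinely measurable data.
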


%In the sequel, we will address a Liouville-type result useful for our purposes. 

%\begin{theorem}[{\bf Liouville-type result}]\label{thm:liouville2}
%Let $u \in \mathcal{C}^0(\mathbb{R}^n)$ be a viscosity solution to
%$$F(D^2 u) = 0 \quad \text{in} \quad \mathbb{R}^n.$$
%There is an exponent $\gamma>0$ depending only on $\lambda, \Lambda$ and $n$ such that if
%$$\displaystyle \lim_{|x| \to +\infty} \frac{|u(x)|}{|x|^{2+\gamma}}=0,$$
%then $u$ is a (quadratic) function.
%\end{theorem}

%\textcolor{red}{We need to address a proof for such a Liouville-type result}.

We will need the following comparison principle, whose proof is analogous to that in \cite[Theorem 1.7]{BBdaSSS25}. For this reason, we omit it here.

\begin{theorem}[{\bf Comparison Principle}]\label{ComPri}
 Let $\Omega\subset \mathbb R^n$ be  a bounded domain  and let $\mathfrak{c}, f_{1}, f_{2} \in \mathcal{C}^0(\overline{\Omega})$, and let $\mathcal{F} : \mathbb{R} \to \mathbb{R}$ be a continuous and non-decreasing function such that $\mathcal{F}(0) = 0$. Suppose that $u, v \in \mathcal{C}^{0}(\Omega)$ are functions satisfying
\begin{equation*}\label{CP-General}
\left\{
\begin{array}{rcll}
F(x,D^{2}u)+\mathscr{H}(x,Du) + \mathfrak{c}(x) \mathcal{F}(u) & \geq & f_1(x) & \text{in } \Omega, \\[0.2cm]
F(x,D^{2}v)+\mathscr{H}(x,Dv)+ \mathfrak{c}(x) \mathcal{F}(v) & \leq & f_2(x) & \text{in } \Omega,
\end{array}
\right.
\end{equation*}
in the viscosity sense, where the operator $F$ satisfies ${(\mathrm{A}_1)}$   and $\mathscr{H}$ is a Hamiltonian function defined in \eqref{1.2} with $\mathcal{B}$ and $\varrho$ satisfying the assumption ${(\mathrm{A}_2)}$. If, $\mathfrak{c} \leq 0$, $f_1 < f_2$, 
and $u\leq v$ on $\partial \Omega$, then, $u \leq v$ in $\Omega$.
\end{theorem}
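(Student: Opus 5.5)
The plan is the standard doubling-of-variables argument of Crandall--Ishii--Lions, adapted as in \cite[Theorem 1.7]{BBdaSSS25}. The strict gap $f_1<f_2$ is used to absorb all error terms, so no structural monotonicity in $u$ beyond $\mathfrak{c}\le 0$ and $\mathcal{F}$ non-decreasing is needed. Argue by contradiction: suppose $\theta:=\sup_{\Omega}(u-v)>0$. Since $u\le v$ on $\partial\Omega$ and $\overline\Omega$ is compact, the supremum is attained in the interior. Because $f_1,f_2\in\mathcal{C}^0(\overline\Omega)$ and $f_1<f_2$, we fix $\delta:=\min_{\overline\Omega}(f_2-f_1)>0$.

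For $\varepsilon>0$ we consider
\[
\Psi_\varepsilon(x,y):=u(x)-v(y)-\frac{|x-y|^2}{2\varepsilon}
\]
on $\overline\Omega\times\overline\Omega$, with maximum point $(x_\varepsilon,y_\varepsilon)$. The usual penalization lemma gives three facts:
\begin{itemize}
\item[(a)] $|x_\varepsilon-y_\varepsilon|^2/\varepsilon\to0$;
\item[(b)] $(x_\varepsilon,y_\varepsilon)\to(\hat x,\hat x)$ with $\hat x\in\Omega$ an interior maximum of $u-v$;
\item[(c)] $u(x_\varepsilon)-v(y_\varepsilon)\ge\theta/2>0$ for $\varepsilon$ small.
\end{itemize}
We then apply Ishii's lemma with $p_\varepsilon:=(x_\varepsilon-y_\varepsilon)/\varepsilon$. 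This yields matrices $\mathrm{X}_\varepsilon,\mathrm{Y}_\varepsilon$, with $(p_\varepsilon,\mathrm{X}_\varepsilon)$ in the appropriate closed jet of $u$ at $x_\varepsilon$ and $(p_\varepsilon,\mathrm{Y}_\varepsilon)$ in that of $v$ at $y_\varepsilon$, satisfying $-\frac{3}{\varepsilon}\mathrm{Id}\le \mathrm{diag}(\mathrm{X}_\varepsilon,-\mathrm{Y}_\varepsilon)\le \frac{3}{\varepsilon}\begin{pmatrix}\mathrm{Id}&-\mathrm{Id}\\-\mathrm{Id}&\mathrm{Id}\end{pmatrix}$. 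In particular $\mathrm{X}_\varepsilon\le \mathrm{Y}_\varepsilon$.

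Plugging these jets into the two viscosity inequalities, read with the testing convention of the paper's Definition of viscosity solutions, and subtracting, we obtain an inequality of the form
\[
\delta\le \big[F(x_\varepsilon,\cdot)-F(y_\varepsilon,\cdot)\big]+\big[\mathscr{H}(x_\varepsilon,p_\varepsilon)-\mathscr{H}(y_\varepsilon,p_\varepsilon)\big]+\big[\mathfrak{c}\mathcal{F}\text{-terms}\big]+o(1).
\]
We then estimate each bracket in turn.
\begin{itemize}
\item[(i)] The operator bracket is split into a part at a common point, which has a favorable sign by $(\mathrm{A}_1)$ and the ordering $\mathrm{X}_\varepsilon\le\mathrm{Y}_\varepsilon$, plus an $x$-oscillation part. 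The latter is controlled by $(\mathrm{A}_3)$ through $\tau(|x_\varepsilon-y_\varepsilon|)\,\|\mathrm{X}_\varepsilon\|$. The factor $\|\mathrm{X}_\varepsilon\|\lesssim 1/\varepsilon$ is compensated by the matrix inequality in the usual way, namely by testing it against $(\xi,\xi)$ and against $(x_\varepsilon-y_\varepsilon)$-directions. If needed, we also add a small strictly convex perturbation $\eta|x|^2$ to $u$, which costs at most $\mathrm{C}\eta<\delta/4$ in the equation.
\item[(ii)] The Hamiltonian bracket equals $\langle\mathcal{B}(x_\varepsilon)-\mathcal{B}(y_\varepsilon),p_\varepsilon\rangle+(\varrho(x_\varepsilon)-\varrho(y_\varepsilon))|p_\varepsilon|^\sigma$. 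By $(\mathrm{A}_2)$ it is bounded by $\mathrm{C}|x_\varepsilon-y_\varepsilon|^{\alpha_0}\big(|p_\varepsilon|+|p_\varepsilon|^{\sigma}\big)$. Since $|x_\varepsilon-y_\varepsilon|\,|p_\varepsilon|=|x_\varepsilon-y_\varepsilon|^2/\varepsilon\to0$ and $\sigma\le2$, this tends to zero. For $\sigma\in(1,2]$ we first localize, so that $|p_\varepsilon|$ is bounded by the Lipschitz bound from Theorem~\ref{C1alpharegularity} on a neighbourhood of $\hat x$.
\item[(iii)] For the zero-order bracket we write $\mathfrak{c}(x_\varepsilon)\mathcal{F}(u(x_\varepsilon))-\mathfrak{c}(y_\varepsilon)\mathcal{F}(v(y_\varepsilon))$. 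Up to the continuity error of $\mathfrak{c}$, this is $\mathfrak{c}\,[\mathcal{F}(u)-\mathcal{F}(v)]$, and it has a definite sign because $\mathfrak{c}\le0$, $\mathcal{F}$ is non-decreasing and $u(x_\varepsilon)>v(y_\varepsilon)$.
\end{itemize}
Letting $\varepsilon\to0$ then produces $\delta\le o(1)$, the desired contradiction, so $u\le v$.

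The main obstacle is the bookkeeping of signs in the combination step. One must check that, with the paper's convention for the viscosity inequalities (the Definition tests ``$\le$'' at maxima and ``$\ge$'' at minima), the jets furnished by Ishii's lemma enter the two inequalities so that the operator bracket and the $\mathfrak{c}\mathcal{F}$ bracket both have the sign that leaves the strict gap $f_2-f_1\ge\delta$ uncompensated. I would verify this first on classical $\mathcal{C}^2$ functions, where $u-v$ has an interior positive maximum and $D^2(u-v)\le0$ there, before writing the viscosity version. This is exactly the point where the orientation of the hypothesis $f_1<f_2$ relative to the ordering $\mathrm{X}_\varepsilon\le\mathrm{Y}_\varepsilon$ must be confirmed. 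A secondary technical point is the non-Lipschitz gradient term for $\sigma<1$, where $|p|^\sigma$ is only Hölder continuous at $p=0$. This is harmless here, because we compare $\mathscr{H}$ at the same $p_\varepsilon$ and only the $x$-dependence of $\varrho$ enters.
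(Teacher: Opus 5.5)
The step you postponed as ``the main obstacle'' is where the argument fails, and it cannot be repaired: with $f_1<f_2$ the statement is false. By $(\mathrm{A}_1)$, $F$ is nondecreasing in the matrix variable. The consistent viscosity convention therefore tests the ``$\geq f_1$'' inequality for $u$ at local maxima and the ``$\leq f_2$'' inequality for $v$ at local minima. The paper's Definition uses the reverse testing, and you cannot use it to fix the sign: replacing a lower test function $\varphi$ by $\varphi-K|x-x_0|^2$ and letting $K\to\infty$ shows that, under $(\mathrm{A}_1)$, no continuous function satisfies ``$\geq f_1$'' in that sense.

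Write $G[w]:=F(x,D^2w)+\mathscr{H}(x,Dw)+\mathfrak{c}(x)\mathcal{F}(w)$. The classical check you proposed gives, at a positive interior maximum of $u-v$, $D^2u\le D^2v$, $Du=Dv$ and $\mathfrak{c}\mathcal{F}(u)\le\mathfrak{c}\mathcal{F}(v)$. Hence $f_1\le G[u]\le G[v]\le f_2$, which is perfectly compatible with $f_1<f_2$. In the viscosity version, subtracting the inequalities for $(p_\varepsilon,\mathrm{X}_\varepsilon)$ and $(p_\varepsilon,\mathrm{Y}_\varepsilon)$ gives $f_1(x_\varepsilon)-f_2(y_\varepsilon)\le(\text{error terms})$, not $\delta\le(\text{error terms})$, so no contradiction appears.

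Concretely, on $B_1$ take $F(x,\mathrm{X})=\operatorname{Tr}\mathrm{X}$, $\mathcal{B}\equiv0$, $\varrho\equiv0$, $\mathfrak{c}\equiv0$, $u=1-|x|^2$, $v\equiv0$, $f_1\equiv-2n-1$, $f_2\equiv0$. Then $u=v$ on $\partial B_1$ but $u>v$ in $B_1$. The correct hypothesis is $f_1>f_2$. That is also the orientation the paper actually uses in the proof of Theorem~\ref{Non-degeneracy of solutions}, where $G[u]\ge0>G[\varphi]$ with $u$ in the first slot and the barrier $\varphi$ in the second. The paper gives no proof of its own here; it only refers to an earlier comparison result.

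Even with $f_1>f_2$, your error bookkeeping does not close under the stated hypotheses.
\begin{itemize}
\item Only $(\mathrm{A}_1)$ is assumed on $F$, so $(\mathrm{A}_3)$ is not available. Even granting it, the oscillation term $\tau(|x_\varepsilon-y_\varepsilon|)\|\mathrm{X}_\varepsilon\|$ is only $\tau(o(\sqrt{\varepsilon}))\cdot O(\varepsilon^{-1})$, which need not vanish. Testing the matrix inequality against $(\xi,\xi)$ gives $\mathrm{X}_\varepsilon\le\mathrm{Y}_\varepsilon$ but no better bound on the norm. The usual compensation works only for structured dependence such as $\operatorname{Tr}(\Sigma(x)\Sigma(x)^{T}\mathrm{X})$ with $\Sigma$ Lipschitz.
\item Since $\mathcal{B}$ is merely $\alpha_0$-H\"older, $|x_\varepsilon-y_\varepsilon|^{\alpha_0}|p_\varepsilon|=|x_\varepsilon-y_\varepsilon|^{1+\alpha_0}/\varepsilon$ is not controlled by $|x_\varepsilon-y_\varepsilon|^2/\varepsilon\to0$. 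The $\varrho$-term has the same defect whenever $\sigma>\alpha_0$, not only for $\sigma>1$.
\item To close this you need a bound on $|p_\varepsilon|$, for instance from a Lipschitz bound on $v$ via $|x_\varepsilon-y_\varepsilon|^2/(2\varepsilon)\le v(x_\varepsilon)-v(y_\varepsilon)$. Theorem~\ref{C1alpharegularity} does not provide it, since it concerns solutions with a linear drift, not sub- or supersolutions.
\end{itemize}

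For the use made in the paper, none of this is needed, because the second function is the $\mathcal{C}^2$ barrier $\kappa|x|^{\bar\alpha}$. At a positive interior maximum $x_0$ of $u-v$, test $u$ directly with $v+\max(u-v)$. This gives $f_1(x_0)\le F(x_0,D^2v(x_0))+\mathscr{H}(x_0,Dv(x_0))+\mathfrak{c}(x_0)\mathcal{F}(u(x_0))\le G[v](x_0)\le f_2(x_0)$. That contradicts $f_1>f_2$, using only $(\mathrm{A}_1)$, $\mathfrak{c}\le0$ and the monotonicity of $\mathcal{F}$.
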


\section{Improved estimates at higher-order singular nodal points}\label{Sec3}

This section is devoted to handling  the proof of the improved growth estimate at points of the set $\Gamma_2(u)$, Theorem \ref{Thm:branching-elliptic}.

\begin{proof}[{\bf Proof of Theorem \ref{Thm:branching-elliptic}}]
Initially, if $u\equiv 0$, we observe that the result is trivial. In this case, we can suppose that $\|u\|_{L^{\infty}(B_{1})}>0$. Now, as $u\in \mathcal{G}_{2}^{\alpha_0}$ it follows that 
\begin{align}\label{the1:gradient}
\|u\|_{\mathcal{C}^{2,\alpha_0}(B_{{1}/{2}})}\leq \mathrm{K}_0.\end{align}
Without loss generality, we assume that $x_0:=0\in \Gamma_2(u)\cap \mathfrak{g}^{-1}(0)\cap B_{1/2}$ and $\|u\|_{L^{\infty}(B_{1})}=1$. It suffices to prove that \begin{align} \label{the1:2}
|u(x)|\leq \mathrm{C}|x|^{\beta}.
\end{align}
In order to establish \eqref{the1:2}, we argue by contradiction. Assume that there exist sequences of $u_j\in\mathcal{G}_{2}^{\alpha_0}$, $\mathcal{B}_j\in \mathcal{C}^{0,\alpha_0}(B_1;\mathbb{R}^n)\cap L^{\infty}(B_1;\mathbb{R}^n)$,
$\varrho_j,\mathfrak{g}_{j}\in \mathcal{C}^{0,\alpha_0}(B_1)\cap L^{\infty}(B_1)$,
and $x_j \in B_{1/2}$ such that
$$F(x,D^2u_j)+\langle B_j(x),Du_j\rangle+\varrho_j(x)|Du_j|^\sigma=\mathfrak{g}_{j}(x)\left((u_j^+)^m-(u_j^-)^m\right)\quad \text{in} \quad B_1,$$
 with $\|u_j\|_{\mathcal{C}^{2,\alpha_0}(B_{1/2})}\leq \mathrm{K}_0$, $0\in \Gamma_2(u_j)\cap\mathfrak{g}_{j}^{-1}(0)\cap B_{1/2}$, and \begin{align}\label{the1:3}
 |u_j (x_j )|>j|x_j |^{\beta}.\end{align}
For each $j\in \mathbb{R}$, define $$\phi_j(\rho):=\displaystyle\sup_{\rho<r<1/2}r^{2+\alpha_0-\beta}[D^2u_j]_{\alpha_0,B_r},$$
then, we have
$$\displaystyle\lim_{\rho\rightarrow0}\phi_j(\rho)=\displaystyle\sup_{0<r<1/2}r^{2+\alpha_0-\beta}[D^2u_j]_{\alpha_0,B_r}.$$
Since $0\in \Gamma_2(u_j)\cap B_{1/2}$, it follows that $u_j(0)=|Du_j(0)|=|D^2u_j(0)|=0$, which together with  \eqref{the1:3} leads to
\begin{align}\label{the1:4}
    j&<\frac{|u_j(x_j)|}{|x_j|^{\beta}}=\frac{\left|u_j(x_j)-u_j(0)-Du_j(0)\cdot x_j-D^2u_j(0)\cdot x_j^2\right|}{|x_j|^{\beta}}\notag\\
    &\leq\frac{[D^2u_j]_{\alpha_0,B_{|x_j|}}|x_j|^{2+\alpha_0}}{|x_j|^{\beta}}
    \leq \phi_j\left(\frac{|x_j|}{2}\right) \leq \displaystyle\lim_{\rho\rightarrow0}\phi_j(\rho)=+\infty.
\end{align}
Therefore, by the definition of $\phi_j$, for $j\geq 3$, there exists $\rho_j   \in\left(\frac{1}{j},\frac{1}{2}\right)$ such that 
\begin{align}\label{the1:5}
    {\rho_j}^{2+\alpha_0-\beta}[D^2u_j]_{\alpha_0,B_{\rho_j}}\geq \frac{1}{2}\phi_j\left(\frac{1}{j}\right)\geq \frac{1}{2}\phi_j\left(\rho_j\right).
\end{align}
By virtue of \eqref{the1:gradient}, \eqref{the1:4}, and \eqref{the1:5}, we deduce that
\begin{equation}\label{Eq3.6}
    {\rho_j}^{\beta-(2+\alpha_0)}\leq \frac{2[D^2u_j]_{\alpha_0,B_{\rho_j}}}{\phi_j({1}/{j})}\leq \frac{2\mathrm{K}_0}{\phi_j({1}/{j})}\leq \frac{2\mathrm{K}_0}{\phi_j(\rho_j)}\quad \text{as} \quad j\rightarrow \infty.
\end{equation}
Note that $2+\alpha_0<\beta$, which implies that $\rho_j\rightarrow 0$ as $j\rightarrow \infty$.

Define now the blow up sequence $$w_j(x):=\frac{u_j(\rho_jx)}{\rho_j^{\beta}\phi_j(\rho_j)}, \quad \forall \ x \in B_{\frac{1}{2\rho_j}}.$$
Then, $w_j$  satisfies, in the viscosity sense, the following equation
$$\widetilde{F}_j(x,D^2w_j)+\langle\widetilde{\mathcal{B}}_j(x),Dw_j\rangle+\widetilde{\varrho_j}(x)|Dw_j|^{\sigma}=\widetilde{\mathscr{F}}_j(x,w_j^+,w_j^-) \quad \text{in} \quad B_{\frac{1}{2\rho_j}},$$
where
$\widetilde{F}_j(x,D^2w_j):=\frac{\rho_j^{2-\beta}}{\phi_j(\rho_j)}F\left(\rho_jx,\frac{\phi_j(\rho_j)}{\rho_j^{2-\beta}}D^2w_j\right)$, $$\widetilde{\mathcal{B}}_j(x):=\rho_j\mathcal{B}_j(\rho_jx), \ 
\widetilde{\varrho_j}(x):=\frac{\rho_j^{2-\beta-\sigma+\sigma\beta}}{\phi_j(\rho_j)^{1-\sigma}}\varrho_j{ (\rho_jx)}, \quad \mbox{and} $$
$$\widetilde{\mathscr{F}}_j(x,w_j^+,w_j^-):=\frac{\rho_j^{2-\beta(1- m)}}{\phi_j(\rho_j)^{1-m}}\mathfrak{g}_{j}(\rho_{j}x)\left((w_j^+)^m-(w_j^-)^m\right).$$
Observe that $w_j(0)=|Dw_j(0)|=|D^2w_j(0)|=0$, and by \eqref{the1:5}, we have
\begin{align}\label{the1:6}
    [D^2w_j]_{\alpha_0,B_1}
    =  [D^2u_j]_{\alpha_0,B_{\rho_j}}\rho_j^{2+\alpha_0-\beta}\cdot \frac{1}{\phi_j(\rho_j)}
    \geq  \frac{\phi_j(1/j)}{2\phi_j(1/j)}=\frac{1}{2}.
\end{align}
 For any fixed $r>1$, there exists $j_0$ such that, for $j\geq j_0$, it holds that $B_r \subsetneq B_ {1/(2\rho_j )} $. For every $x\in B_r$, we deduce by $\rho_jr\in\left(\rho_j,\frac{1}{2}\right)$ that 
 \begin{align}\label{the1:7}
     |w_j(x)|\leq \frac{[D^2u_j]_{\alpha_0,B_{\rho_jr}}(\rho_jr)^{\alpha_0+2-\beta}}{\phi_j(\rho_j)}r^{\beta}\leq \frac{{\phi_j(\rho_jr)}}{{\phi_j(\rho_j)}}r^{\beta}\leq r^{\beta}.
 \end{align}
 Consequently, we obtain $\|w_j\|_{L^{\infty}(B_r)}\leq r^{\beta}$.
 Similarly, we have 
 $$|Dw_j(x)|\leq r^{\beta-1},\quad |D^2w_j(x)|\leq r^{\beta-2},  \quad [D^2w_j(x)]_{\alpha,B_r}\leq r^{\beta-(2+\alpha_0)}.$$
Hence, since $r>1$, we get $$\|w_j\|_{\mathcal{C}^{2,\alpha_0}(B_r)}\leq \|w_j\|_{L^\infty(B_r)}+\|Dw_j\|_{L^\infty(B_r)}+\|D^2w_j\|_{L^\infty(B_r)}+[D^2w_j(x)]_{\alpha_0,B_r}\leq 4r^{\beta}.$$
Thus, by applying Arzel\'a-Ascoli theorem, there exists some $w_\infty \in C_{\rm loc}^{2,\alpha_0}(B_r)$ such that,  up to a subsequence, it yields $w_j\rightarrow w_\infty$ in $C_{\rm loc}^{2,\alpha_0}(B_r)$. In addition,   $$w_\infty(0)=|Dw_\infty(0)|=|D^2w_\infty(0)|=0.$$
From \eqref{the1:6}, we obtain $[D^2w_\infty]_{\alpha_0,B_1}\geq\frac{1}{2}$.
Moreover, by \eqref{the1:7}, it follows that $w_\infty(x)\leq |x|^{\beta}$.

Now, letting $j\rightarrow\infty$, we infer from $\rho_j\rightarrow0$  that
$$
\|\widetilde{\mathcal{B}}_j(x)\|_{L^{\infty}(B_{1};\mathbb{R}^{n})}=\rho_j\|\mathcal{B}_j(\rho_j\cdot)\|_{L^{\infty}(B_{1};\mathbb{R}^{n})}\leq \rho_j\|\mathcal{B}_j\|_{L^\infty(B_{1/2};\mathbb{R}^{n})}\rightarrow0.
$$
In order to estimate $\widetilde{\varrho_j}(x)$, we consider two cases separately: $\sigma\in \left[\frac{\beta-2}{\beta-1},1\right)$ and $\sigma\in(1,2]$.

\noindent \boxed{\textbf{Case 1}}: Assume that $\sigma\in \left[\frac{\beta-2}{\beta-1},1\right)$,   then $2-\beta + \sigma(\beta -1) >0$, and as $j\to \infty$, $\rho_j \to 0$, yielding 
\begin{align*}|\widetilde{\varrho_j}(x)|=&\left|\frac{\rho_j^{2-\beta-\sigma+\sigma\beta}}{\phi_j(\rho_j)^{1-\sigma}}\varrho_j{ (\rho_jx)}\right|
\leq \frac{\rho_j^{2-\beta-\sigma+\sigma\beta}}{\phi_j(\rho_j)^{1-\sigma}}\|\varrho_j\|_{{L^\infty(B_{1/2})}}
 \leq \frac{\rho_j^{2-\beta-\sigma+\sigma\beta}}{j^{1-\sigma}}\|\varrho_j\|_{{L^\infty(B_{1/2})}}\rightarrow 0.
 \end{align*}
 
\noindent \boxed{\textbf{Case 2}}: Assume that $\sigma\in(1,2]$, using \eqref{Eq3.6}, as $j\to \infty$, we obtain
\begin{align*}|\widetilde{\varrho_j}(x)|
 \leq {\rho_j^{2-\beta-\sigma+\sigma\beta}}\left(\frac{\rho_j^{\beta-(2+\alpha_0)}}{2\mathrm{K}_0}\right)^{1-\sigma}\|\varrho_j\|_{{L^\infty(B_{1/2})}}
 \leq  \rho_j^{\sigma(1+\alpha_0)-\alpha_0} \frac{\|\varrho_j\|_{{L^\infty(B_{1/2})}}}{(2\mathrm{K}_0)^{1-\sigma}}        \rightarrow 0.
 \end{align*}
Therefore, in both cases, we conclude that $|\widetilde{\varrho_j}(x)|\rightarrow 0$ as $j\rightarrow\infty$.

For the source term $\bar{\mathscr{F}}_{j}$, note that the structural condition ($\rm A_{2}$) combined with $0\in \mathfrak{g}_{j}^{-1}(0)$ {implies} that 
\[
\mathfrak{g}_{j}(\rho_{j}x)\leq \mathrm{C}_{0}\rho_{j}^{\mu}|x|^{\mu}.
\]
Using this fact jointly with $m\in(0,1)$, and $ \beta\in \left(2+\alpha_0,\frac{2+\mu}{1-m}\right]$, by \eqref{the1:7} we have
\begin{eqnarray*}
|\widetilde{\mathscr{F}}_j(x,w_j^+,w_j^-)|&\leq &\mathrm{C}_{0}\left|\frac{\rho_j^{2+\mu-\beta+\beta m}}{\phi_j(\rho_j)^{1-m}}\right|\cdot|x|^\mu\left((w_j^+)^m-(w_j^-)^m\right)\\
    &\leq  &\frac{2\mathrm{C}_{0}\rho_j^{2+\mu-\beta+\beta m}}{j^{1-m}}r^{\mu+{\beta m}}\rightarrow 0.
\end{eqnarray*}
Additionally, from \eqref{Eq3.6}, one has
$$\frac{\phi_j(\rho_j)}{\rho_j^{2-\beta}}\leq 2\mathrm{K}_0\rho_j^{\alpha_0}\rightarrow 0 \quad \text{as} \quad j\rightarrow \infty.$$
Thus, the structural conditions $\rm (A_1)-(A_2)$ and $\rm(A_4)$ yield
\[
\widetilde{F}_j(x,D^2w_j):=\frac{\rho_j^{2-\beta}}{\phi_j(\rho_j)}F\left(\rho_jx,\frac{\phi_j(\rho_j)}{\rho_j^{2-\beta}}D^2w_j\right) \to \text{Tr}(\mathfrak{A}(0)D^2w_\infty),
\]
locally uniformly as $j \to \infty$. 
Therefore, by the stability theorem for viscosity
solutions, see \cite[Section 6]{CIL92}, we conclude that 
\(w_\infty\)  solves
\begin{equation*}
\text{Tr}(\mathfrak{A}(0)D^2w_\infty)=0
\quad \text{in } \quad \mathbb{R}^n.
\end{equation*}
Since $\beta-\frac{2+\mu}{1-m}<0$,  by \eqref{the1:7}, it follows that \begin{equation*}
\frac{|w_\infty(x)|}{|x|^{\frac{2+\mu}{1-m}}} \leq \mathrm{C}|x|^{\beta-\frac{2+\mu}{1-m}}\rightarrow 0 \quad \text{as} \quad {|x| \to +\infty} .    
\end{equation*}
Hence, by the Liouville-type property (see e.g. \cite[Proposition 1.19]{FR-O}), 
 we conclude that $w_\infty$ is  a quadratic polynomial function. Since $w_\infty(0)=|Dw_\infty(0)|=|D^2w_\infty(0)|=0$,
it follows that $w_\infty=0$, which yields a contradiction with \eqref{the1:6}.
\end{proof}

\section{Some consequences of the improved growth estimates} \label{Sec4}

The improved estimates proved above are central to our study of higher-order singular nodal points. In this section, we explore its main consequences for local solution behavior and structure.

\subsection{Sharp gradient growth}

As a consequence of Theorem \ref{Thm:branching-elliptic}, we provide the proof of Corollary \ref{Cor:SharpGradientGrowth} that gives a sharp growth of the gradient {along} the higher-order singular nodal points. The proof proceeds by contradiction, using geometric decay estimates as introduced in \cite{CKS00}.

\begin{proof}[{\bf Proof of Corollary \ref{Cor:SharpGradientGrowth}}]
For simplicity, assume that \(x_{0}=0 \in \Gamma_2(u)\cap B_{1/2}\). It suffices to prove that for all $j\in\mathbb{N}$
\begin{equation}\label{Grad1}
\mathrm{S}_{j+1} \leq \max\left\{\mathrm{C} \, 2^{-\widehat{\beta}(j+1)}, \, 2^{-\widehat{\beta}} \mathrm{S}_j\right\},
\end{equation}
where \(\displaystyle \mathrm{S}_j \coloneqq \sup_{B_{2^{-j}}} |Du|\) and $\widehat{\beta} = \bar{\alpha}-1 = \frac{2+\mu}{1-m} -1$, for a universal constant $\mathrm{C}>0$.
Suppose, by contradiction, that \eqref{Grad1} fails. Then, for each $k \in \mathbb{N}$, there exists $j_k \in \mathbb{N}$ such that
\begin{equation}\label{Div2}
\mathrm{S}_{j_k+1} > \max\left\{k \, 2^{-\widehat{\beta}(j_k+1)}, \, 2^{-\widehat{\beta}} \mathrm{S}_{j_k}\right\}.
\end{equation}
For each $k$, define the auxiliary function
\[
u_k(x) \defeq \frac{2^{j_k} u(2^{-j_k}x)}{\mathrm{S}_{j_k+1}}, \quad x \in B_1.
\]
Thus, applying Theorem \ref{Thm:branching-elliptic} with $\beta=\bar\alpha$ and the inequality \eqref{Div2}, we obtain
\begin{equation}\label{estdetildeuk}
|u_k(x)| \leq \frac{\mathrm{C}\|u\|_{L^{\infty}(B_{1})} 2^{-j_k(1+\widehat{\beta})} 2^{j_k}}{\mathrm{S}_{j_k+1}} \leq \frac{2^{\widehat{\beta}} \mathrm{C}\|u\|_{L^{\infty}(B_{1})}}{k}, \quad x \in B_1.
\end{equation}
Moreover,
\[
\|Du_k\|_{L^\infty(B_{1/2})} = 1, \quad \text{and} \quad \|Du_k\|_{L^\infty(B_1)} \leq 2^{\widehat{\beta}}.
\]
Note also that \(u_k\) solves, in the viscosity sense, the following equation
\begin{align*}
F_k(x, D^2 u_k) + \langle \mathcal{B}_k(x), Du_k \rangle + \varrho_k(x)|Du_k|^{\sigma}
&= \frac{2^{-j_k(1+\mu+m)}}{\mathrm{S}_{j_k+1}^{1-m}} |x|^\mu \big((u_k^+)^m -  (u_k^-)^m\big)\\
&\eqqcolon f_k(x,u_k)
\quad \text{in} \ B_1,
\end{align*}
where \(\mathcal{B}_k(x) = 2^{-j_k} \mathcal{B}(2^{-j_k}x)\).  
By \eqref{Div2}, \eqref{estdetildeuk}, and the definition of $\widehat{\beta}$, one has
\[
\|f_k\|_{L^\infty(B_1)} \leq \frac{2^{1+\widehat{\beta}} \mathrm{C}^{m}}{k},
\]
and, by construction, \(\|\mathcal{B}_k\|_{L^\infty(B_1;\mathbb{R}^n)} \leq \|\mathcal{B}\|_{L^\infty(B_1;\mathbb{R}^n)}\).  
Applying the uniform gradient estimate (Theorem~\ref{C1alpharegularity}), together with \eqref{estdetildeuk} and the above bounds, we arrive at
\begin{align*}
1 &= \|Du_k\|_{L^\infty(B_{1/2})} \leq \mathrm{C}\left(n, \mu, m, \|\mathcal{B}\|_{L^\infty(B_1;\mathbb{R}^n)}\right) 
\left(\|u_k\|_{L^\infty(B_1)} + \|f_k(\cdot, u_k)\|_{L^\infty(B_1)} \right) \\
&\leq \mathrm{C} 
\left( \frac{2^{\widehat{\beta}}\|u\|_{L^{\infty}(B_{1})}}{k} + \frac{2^{1+\widehat{\beta}}\mathrm{C}^{m-1}}{k}  \right) \to 0,
\end{align*}
a contradiction for \(k \gg 1\). Therefore, the result follows.
\end{proof}

\subsection{$L^p$-average estimates for the Hessian}

We now establish the $L^{p}$-average estimates for the Hessian. The key idea is to combine the improvement estimates from Theorem \ref{Thm:branching-elliptic} with the $W^{2,p}$ estimates provided by Theorem \ref{W2pest} in a scaling argument over balls of radius $r$.

\begin{proof}[{\bf Proof of Theorem \ref{Lpaverage}}] 
Without loss of generality, suppose that $x_{0}=0$. Given $r\in (0,1/2)$, we define the following scaled profile
\begin{equation*}
u_{r}(x)=\frac{u(rx)}{r^{\beta}},\quad x\in B_{1}.    
\end{equation*}
Initially, we have that Theorem \ref{Thm:branching-elliptic} is still valid under the hypotheses of this theorem. Therefore, we immediately have that
\begin{equation}\label{estofur}
\|u_{r}\|_{L^{\infty}(B_{1})}\leq \mathrm{C}\|u\|_{L^{\infty}(B_{1})},
\end{equation}
where $\mathrm{C}$ is a positive constant depending only only on $n$, $\lambda$, $\Lambda$, $m$, $\sigma$,
$\|\mathcal{B}\|_{\mathcal{C}^{0,\alpha_{0}}(B_1;\mathbb{R}^{n})}$,
$\|\varrho\|_{\mathcal{C}^{0,\alpha_{0}}(B_1)}$, and 
$\|\mathfrak g\|_{L^\infty(B_1)}$. Moreover, it is possible to check that $u_{r}$ solves, in the viscosity sense,
\[
F_{r}(x,D^{2}u_{r})+\langle \mathcal{B}_{r}(x), Du_{r}\rangle+\varrho_{r}(x)|Du_{r}|^{\sigma}=\mathscr{F}_{r}(x,u_{r}^{+},u_{r}^{-})\quad \text{in}\quad B_{1},
\]
where $F_{r}(x, \mathrm{X})=r^{2-\beta}F(rx,r^{\beta-2}\mathrm{X})$, $\mathcal{B}_{r}(x)=r\mathcal{B}(rx)$, $\varrho_{r}(x)=r^{\sigma(\beta-1)-(\beta-2)}\varrho(rx)$, and
\[
\mathscr{F}_{r}(x)=\mathscr{F}_{r}(x,u_{r}^{+},u_{r}^{-})=r^{2-\beta(1-m)}\mathfrak{g}(rx)((u_{r}^{+})^{m}-(u_{r}^{-})^{m}).
\] 
In this case, we have that the convexity (or concavity) of $F$ is preserved by the scaling, and hence $F_{r}$ is also convex (or concave). Moreover, $F_{r}$ satisfies ($\rm A_{1}$) with the same ellipticity constants $\lambda$ and $\Lambda$. Regarding condition ($\rm A_{3}$), since $F$ satisfies this condition and $r<1$, a straightforward calculation yields
\[
\frac{|F_{r}(\mathrm{X},x)-F_{r}(\mathrm{X},y)|}
{\|\mathrm{X}\|}\leq \tau(r|x-y|)\leq \tau(|x-y|),
\]
for every $\mathrm{X}\in \mathrm{Sym(n)}$ and every $x,y\in B_{1}$. This proves that $F_{r}$ also satisfies condition ($\rm A_{3}$). In addition, since $r<1,$ it follows that
$$
\|\mathcal{B}_{r}\|_{L^{\infty}(B_{1};\mathbb{R}^{n})}\leq r\|\mathcal{B}\|_{L^{\infty}(B_{r};\mathbb{R}^{n})}\leq \|\mathcal{B}\|_{L^{\infty}(B_{1};\mathbb{R}^{n})},
$$
and besides, in view of $\sigma \geq \frac{\beta-2}{\beta-1}$, we have
$$
\|\varrho_{r}\|_{L^{\infty}(B_{1})}\leq r^{\sigma(\beta-1)-(\beta-2)}\|\varrho\|_{L^{\infty}(B_{r})}\leq \|\varrho\|_{L^{\infty}(B_{1})}.
$$
For the source term $\mathscr{F}_{r}$,  as in the proof of Theorem \ref{Thm:branching-elliptic}, the fact $0\in \mathfrak{g}^{-1}(0)$ combined with the assumption ($\rm A_{2}$) implies that $0\in F$ and $\mathfrak{g}(rx)\leq \mathrm{C}_{0}r^{\mu}|x|^{\mu}$. Hence, this fact and the estimate \eqref{estofur} yield
\[
|\mathscr{F}_{r}(x,u_{r}^{+},u_{r}^{-})|\leq 2\mathrm{C}_{0}\mathrm{C}^{m}\|u\|_{L^{\infty}(B_{1})}^{m}r^{2+\mu-\beta(1-m)}|x|^{\mu}\leq 2\mathrm{C}_{0}\mathrm{C}^{m}\|u\|_{L^{\infty}(B_{1})}^{m},
\]
for all $x\in B_{1}$.
Therefore, we can apply the $W^{2,p}$ estimates (Theorem \ref{W2pest}) and conclude, in particular, that
\begin{equation*}
\|D^{2}u_{r}\|_{L^{p}(B_{1/2})}\leq \mathrm{C}\left(\|u_{r}\|_{L^{\infty}(B_{1})}+\|\mathscr{F}_{r}\|_{L^{p}(B_{1})}+\|\varrho_{r}\|_{L^{\infty}(B_{1})}^{\frac{1}{1-\sigma}}\right),
\end{equation*}
if $\sigma<1$, where $\mathrm{C}>0$ depends only on $n$, $\lambda$, $\Lambda$, $p$, $m$, $\sigma$, $\mathrm{C}_{0}$, and $\|\mathcal{B}\|_{L^{\infty}(B_{1};\mathbb{R}^{n})}$. In the superlinear case, the following estimate holds
\begin{equation*}
\|D^{2}u_{r}\|_{L^{p}(B_{1/2})}\leq \mathrm{C}\left(\|u_{r}\|_{L^{\infty}(B_{1})}+\|\mathscr{F}_{r}\|_{L^{p}(B_{1})}\right),
\end{equation*}
where the constant $\mathrm{C}$ in this estimate depends on the parameters from the previous case, and additionally on $\|\varrho\|_{L^{\infty}(B_{1})}$. Using  \eqref{estofur} and the estimate for $\mathscr{F}_{r}$, in both cases we conclude that
\[
\|D^{2}u_{r}\|_{L^{p}(B_{1/2})}\leq \widetilde{\mathrm{M}}_{0},
\]
where $\widetilde{\mathrm{M}}_{0}>0$ depends only on $n$, $\lambda$, $\Lambda$, $p$, $m$, $\sigma$, $\mathrm{C}_{0}$, $\|\mathcal{B}\|_{\mathcal{C}^{0,\alpha_{0}}(B_{1};\mathbb{R}^{n})}$, $\|\varrho\|_{\mathcal{C}^{0,\alpha_{0}}(B_{1})}$, and $\|u\|_{L^{\infty}(B_{1})}$. Using this inequality, we obtain
\begin{eqnarray*}
\left(\intav{B_{\frac{r}{2}}}|D^{2}u|^{p}\,dx\right)^{\frac{1}{p}}\leq \mathrm{M}_{0}r^{\beta-2},
\end{eqnarray*}
where $\mathrm{M}_{0}={2^{\frac{n}{p}}\widetilde{\mathrm{M}}_{0}}{|B_{1}|^{-\frac{1}{p}}}$. This ends the proof.
\end{proof}

\subsection{A Liouville-type result}

We conclude this section by presenting the proof of Theorem~\ref{thm:liouville}, which establishes a Liouville-type result under the optimal growth condition with exponent $\bar{\alpha}=\frac{2+\mu}{1-m}$.

\begin{proof}[{\bf Proof of Theorem \ref{thm:liouville}}]
As a matter of convenience, we assume $x_{0}=0$. For $\mathrm{R} > 1$, define
\[
u_{\mathrm{R}}(x) := \frac{u(\mathrm{R}x)}{\mathrm{R}^{\bar{\alpha}}}.
\]
Since $u_{\mathrm{R}}(0) = 0$ and $u_{\mathrm{R}}$ solves \eqref{1.1} in the viscosity sense, Theorem~\ref{Thm:branching-elliptic} implies
\begin{equation}\label{5.2}
|u_{\mathrm{R}}(x)| \leq \mathrm{C} \|u_{\mathrm{R}}\|_{L^\infty(\mathbb{R}^n)} |x|^{\bar{\alpha}}.
\end{equation}
If $|\mathrm{R}x|$ remains bounded, then $u(\mathrm{R}x)$ is bounded, hence,
\begin{equation}\label{5.3}
|u_{\mathrm{R}}(x)| \to 0 \quad \text{as } \mathrm{R} \to +\infty.
\end{equation}
The same conclusion holds when $|\mathrm{R}x| \to +\infty$, since \eqref{Hip-Liouville:Thm} yields
\begin{equation}\label{5.4}
|u_{\mathrm{R}}(x)| = \frac{|u(\mathrm{R}x)|}{|\mathrm{R}x|^{\bar{\alpha}}} |x|^{\bar{\alpha}} \to 0, \quad x \neq 0.
\end{equation}
Combining \eqref{5.3} and \eqref{5.4}, we deduce
\begin{equation}\label{5.5}
\|u_{\mathrm{R}}\|_{L^{\infty}(\mathbb{R}^{n})} \to 0 \quad \text{as } \quad \mathrm{R} \to +\infty.
\end{equation}
Arguing by contradiction, assume $u \not\equiv 0$. Then, there exists $z \in \mathbb{R}^n \setminus \{0\}$ such that $|u(z)| > 0$. In view of \eqref{5.5}, we can choose $\mathrm{R}$ sufficiently large so that $z \in B_\mathrm{R}$ and$$\Vert{}u_\mathrm{R}\Vert{}_{L^{\infty}(\mathbb{R}^n)} < \frac{1}{2} \frac{\vert{}u(z)\vert{}}{\vert{}z\vert{}^{\bar{\alpha}}}.$$ Using \eqref{5.2}, and the this last inequality, we deduce
\[
\frac{|u(z)|}{|z|^{\bar{\alpha}}}
\leq \sup_{B_{\mathrm{R}}} \frac{|u(x)|}{|x|^{\bar{\alpha}}}
= \sup_{B_1} \frac{|u_{\mathrm{R}}(x)|}{|x|^{\bar{\alpha}}}
< \frac{1}{2} \frac{|u(z)|}{|z|^{\bar{\alpha}}},
\]
which is a contradiction. Therefore, $u \equiv 0$.
\end{proof}

\section{Boundary estimates at higher-order singular nodal points}\label{Sec5}

In what follows, we establish our boundary estimates at higher-order singular nodal points. The proof follows the general strategy developed in Theorem~\ref{Thm:branching-elliptic}, based on blow-up and compactness arguments. Nevertheless, the boundary and its prescribed data require a more delicate analysis, as rescaled solutions must be controlled up to the boundary. In particular, the blow-up argument must preserve both the boundary condition and the higher-order vanishing of the boundary data at the singular point.

\begin{proof}[{\bf Proof of Theorem \ref{Thm:boundary}}]
First, we note that if $\|u\|_{L^{\infty}(B^{+}_{1})}=0$, then the result is immediate, since, by continuity, $u\equiv 0$. Thus, it suffices to consider the case $\|u\|_{L^{\infty}(B_{1}^{+})}>0$. Now, by $u\in \widetilde{\mathcal{G}}_{2}^{\alpha_{0}}$, it is known that
\begin{equation*}\label{the8:gradient}
\|u\|_{\mathcal{C}^{2,\alpha _0}\left(\overline{B_{{1}/{2}}^+}\right)}\leq \mathrm{K}_0.
\end{equation*}
Without loss generality, we assume that $x_0:=0\in \Gamma_2(u)\cap \mathfrak{g}^{-1}(0) \cap T_{{1}/{2}}$ and $\|u\|_{L^{\infty}(B_{1}^{+})}=1$. It suffices to prove that \begin{align} \label{the8:2}
|u(x)|\leq \mathrm{C}|x|^{\beta}, \quad \forall \ x\in B_{{1}/{2}}^+\cup T_{{1}/{2}}.
\end{align}
In order to establish \eqref{the8:2}, we argue by contradiction. Assume that there exist sequences of $$u_j\in \mathcal{C}^0(B_1^+\cup T _1), \ \mathcal{B}_j\in \mathcal{C}^{0,\alpha_0}(B_1^+;\mathbb{R}^n)\cap L^{\infty}(B_1^+;\mathbb{R}^n), \
\varrho_j,\mathfrak{g}_{j}\in \mathcal{C}^{0,\alpha_0}(B_1^+)\cap L^{\infty}(B_1^+),$$  
$ \mathrm{h}_j \in \mathcal{C}^{2,\widehat\beta -2}{(T _1)},$ and $x_j\in B_{{1}/{2}}^+\cup T_{{1}/{2}}$ such that
\begin{equation*}
\left\{
\begin{array}{rcll}
F(x,D^2u_j)+\langle B_j(x),Du_j\rangle+\varrho_j(x)|Du_j|^\sigma &= &\mathfrak{g}_{j}(x)\left((u_j^+)^m-(u_j^-)^m\right)& \text{in} \quad B_1^+,\\
u_j&=&\mathrm{h}_j &\text{on  }\,\, \ T_{1}.
\end{array}
\right.
\end{equation*} 
with  $0\in \Gamma_2(u_j)\cap\mathfrak{g}_{j}^{-1}(0)\cap T _1$,   $\mathrm{h}_ j(0)=|D\mathrm{h}_ j(0)|=|D^2\mathrm{h}_ j(0)|=0 $, 
\[\|u_j\|_{\mathcal{C}^{2,\alpha _0}(\overline{B_{1/2}^+})}\leq \mathrm{C_0}, \quad \displaystyle\sup_j\|\mathrm{h}_j\|_{\mathcal{C}^{2,\widehat\beta-2}(T_1)}\leq \mathrm{C},\] and \begin{align}\label{the8:3}
 |u_j (x_j )|>j|x_j |^{\beta}.\end{align}
For each $j\in \mathbb{R}$, define $$\phi_j(\rho):=\displaystyle\sup_{\rho<r<1/2}r^{2+\alpha_0-\beta}[D^2u_j]_{\alpha_0,B_{r}^{+}\cup T_{r}},$$
then, we have
$$\displaystyle\lim_{\rho\rightarrow0}\phi_j(\rho)=\displaystyle\sup_{0<r<1/2}r^{2+\alpha_0-\beta}[D^2u_j]_{\alpha_0,B_{r}^{+}\cup T_{r}}.$$
Since $0\in \Gamma_2(u_j)\cap\mathfrak{g}_{j}^{-1}(0)\cap T _{1/2}$, it follows that $u_j(0)=|Du_j(0)|=|D^2u_j(0)|=0$, which together with  \eqref{the8:3} leads to
\begin{equation*}\label{the8:4}
    j<\frac{|u_j(x_j)|}{|x_j|^{\beta}}
    \leq\frac{[D^2u_j]_{\alpha_0,B_{|x_j|}^+ \cup T_{|x_j|}}|x_j|^{2+\alpha_0}}{|x_j|^{\beta}}
    \leq \phi_j\left(\frac{|x_j|}{2}\right)
   \leq \displaystyle\lim_{\rho\rightarrow0}\phi_j(\rho).
\end{equation*}
Therefore, 
arguing as in \eqref{the1:5}-\eqref{Eq3.6} in the proof of Theorem \ref{Thm:branching-elliptic}, we deduce that $\rho_j\rightarrow 0$ as $j\rightarrow \infty$.
Define now the blow up sequence $$w_j(x):=\frac{u_j(\rho_jx)}{\rho_j^{\beta}\phi_j(\rho_j)}, \quad \forall \ x \in B_{\frac{1}{2\rho_j}}^+\cup T _{\frac{1}{2\rho_j}}^+.$$
Then, $w_j$  satisfies, in the viscosity sense, the following equation
\begin{equation*}
\left\{
\begin{array}{rcll}
    \widetilde{F}_j(x,D^2w_j)+\langle\widetilde{\mathcal{B}}_j(x),Dw_j\rangle+\widetilde{\varrho_j}(x)|Dw_j|^{\sigma}&=&\widetilde{\mathscr{F}}_j(x,w_j^+,w_j^-) & \text{in} \quad B_{\frac{1}{2\rho_j}}^+,\\
 w_j(x',0)&=& \widetilde{\mathrm{h}}_j(x',0) & \text{on } \quad T _{\frac{1}{2\rho_j}}^+.
\end{array}
\right.
\end{equation*} 
where
$\widetilde{F}_j(x,\mathrm{X}):=\frac{\rho_j^{2-\beta}}{\phi_j(\rho_j)}F\left(\rho_jx,\frac{\phi_j(\rho_j)}{\rho_j^{2-\beta}}\mathrm{X}\right)$, $\widetilde{\mathcal{B}}_j(x):=\rho_j\mathcal{B}_j(\rho_jx)$,
$$ \widetilde{\varrho_j}(x):=\frac{\rho_j^{2-\beta-\sigma+\sigma\beta}}{\phi_j(\rho_j)^{1-\sigma}}\varrho_j{ (\rho_jx)}, \ \  \widetilde{\mathrm{h}}_j(x',0)=\frac{\mathrm{h}_j(\rho _jx',0)}{\rho_j^{\beta}\phi_j(\rho_j)},$$ and
$$\widetilde{\mathscr{F}}_j(x,w_j^+,w_j^-):=\frac{\rho_j^{2-\beta(1- m)}}{\phi_j(\rho_j)^{1-m}}\mathfrak{g}(\rho_{j}x)\left((w_j^+)^m-(w_j^-)^m\right).$$
Observe that $w_j(0)=|Dw_j(0)|=|D^2w_j(0)|=0$, and similarly to  \eqref{the1:6}, we obtain
\begin{align}\label{the8:6}
    [D^2w_j]_{\alpha_0,B_1^+ \cup T_1}
  =  [D^2u_j]_{\alpha_0,B_{\rho_j}^+ \cup T_{\rho_j}}\rho_j^{2+\alpha_0-\beta}\cdot \frac{1}{\phi_j(\rho_j)}
    \geq  \frac{\phi_j(1/j)}{2\phi_j(1/j)}=\frac{1}{2}.
\end{align}

Reasoning analogously to \eqref{the1:7},  we also deduce $|w_j(x)|\leq r^{\beta}$, which implies  $\|w_j\|_{L^{\infty}(B_r^+)}\leq r^{\beta}$ and
 $$|Dw_j(x)|\leq r^{\beta-1},\quad |D^2w_j(x)|\leq r^{\beta-2},  \quad [D^2w_j(x)]_{\alpha,B^+_r \cup T_r}\leq r^{\beta-(2+\alpha_0)}.$$
Hence, since $r>1$, we get $\|w_j\|_{\mathcal{C}^{2,\alpha_0}(B_r^+ \cup T_r)}\leq 4r^{\beta}.$
Thus, by applying the Arzel\'a-Ascoli theorem, there exists some $w_\infty \in \mathcal{C}_{\rm loc}^{2,\alpha_0}(B_r^+ \cup T_r)$ such that,  up to a subsequence,  $w_j\rightarrow w_\infty$ in $\mathcal{C}_{\rm loc}^{2,\alpha_0}(B_r^+ \cup T_r)$. In addition, by the pointwise limit it follows that $w_\infty(x)\leq |x|^{\beta}$,  $$w_\infty(0)=|Dw_\infty(0)|=|D^2w_\infty(0)|=0,$$
and from \eqref{the8:6}, we obtain $[D^2w_\infty]_{\alpha_0,B_1^+ \cup T_1}\geq\frac{1}{2}$.

Now,  letting $j\rightarrow\infty$, likewise in the proof of Theorem \ref{Thm:branching-elliptic}, we conclude that
$$\|\widetilde{\mathcal{B}}_j(x)\|_{L^{\infty}(B_{1}^+;\mathbb{R}^{n})}\rightarrow0, \quad |\widetilde{\varrho_j}(x)|\rightarrow 0, \quad 
|\widetilde{\mathscr{F}}_j(x,w_j^+,w_j^-)|\rightarrow 0,$$
moreover, since $\mathrm{h}_j(0)=|D\mathrm{h}_j(0)|=|D^2\mathrm{h}_j(0)|=0$ and $\mathrm{h}_j\in\mathcal{C}^{2,\widehat\beta-2}(T_1)$, by the definition of $\phi_j$, there exists a constant $\mathrm{C}>0$ such that
$$|\widetilde{\mathrm{h}}_j(x',0)|=\left|\frac{\mathrm{h}_j(\rho _jx',0)}{\rho_j^{\beta}\phi_j(\rho_j)}\right|\leq \frac{\mathrm{C}|\rho _jx'|^{\widehat\beta }}{\rho_j^{\beta}\phi_j(\rho_j)}\leq \frac{\mathrm{C} \rho_j^{\widehat\beta}}{%\phi_j( 
\rho_j^{2 + \alpha_0}}\rightarrow 0,$$
provided that $\widehat \beta > 2 + \alpha_0.$
Since $w_j\rightarrow w_\infty$ locally uniformly in $\mathbb{R}^n_+$, $\|w_j\|_{\mathcal{C}^{2,\alpha_0}(B_r^+ \cup T_r)}\leq 4r^{\beta}$, and $w_j(x',0)=\mathrm{h}_j(x',0)\rightarrow0$, we extend $w_\infty$ continuously  up to the boundary, that is, $w_\infty=0$ {on} $\partial \mathbb{R}^n_+=\{x_n=0\} $.
Again, from \eqref{Eq3.6}, one has
$$\frac{\phi_j(\rho_j)}{\rho_j^{2-\beta}}\leq 2\mathrm{K}_0\rho_j^{\alpha_0}\rightarrow 0 \quad \text{as} \quad j\rightarrow \infty.$$
Thus, the structural conditions $\rm (A_1)-(A_2)$ and $\rm(A_4)$ 
combined with the stability theorem for viscosity
solutions, see \cite[Section 6]{CIL92}, yield that 
\(w_\infty\)  solves, in the viscosity sense the following problem
\begin{equation*}
\left\{
\begin{array}{rcll}
\text{Tr}(\mathfrak{A}(0)D^2w_\infty)&=&0&\text{in } \quad \mathbb{R}^n_+,\\
w_\infty&=&0 &\text{on}\quad \partial \mathbb{R}^n_+.
\end{array}
\right.
\end{equation*}
We now observe that, up to a change of coordinates, $w_\infty$ is harmonic in the half space $\mathbb{R}^n_+$. Then, by the Schwarz Reflection Principle, we can extend $w_\infty$ to the entire space $\mathbb{R}^n$ by the function $\widetilde{w}_\infty$
\begin{equation*}\widetilde{w}_\infty(x',x_n)=
\left\{
\begin{array}{rcll}
w_\infty(x',x_n)&\text{if } \quad x_n\geq 0,\\
-w_\infty(x',-x_n)&\text{if }\quad x_n<0.
\end{array}
\right.
\end{equation*} 
Since \begin{equation*}
\frac{|\widetilde{w}_\infty(x)|}{|x|^{\frac{2+\mu}{1-m}}} \leq \mathrm{C}|x|^{\beta-\frac{2+\mu}{1-m}}\rightarrow 0 \quad \text{as} \quad {|x| \to +\infty},    
\end{equation*}
by the Liouville-type property, we conclude that $\widetilde{w}_\infty$ is  a quadratic polynomial function. Taking into account  that $w_\infty(0)=|Dw_\infty(0)|=|D^2w_\infty(0)|=0$,
we obtain that $w_\infty=0$, which yields a contradiction with \eqref{the8:6}. Therefore, the result holds.

\end{proof}

\section{Further properties of two-phase solutions}\label{Sec6}

We conclude this manuscript by presenting further properties of solutions to \eqref{1.1} in connection with the higher-order singular nodal set.

\subsection{Flipping estimates}    

Next, we prove the flipping estimates by combining a blow-up contradiction argument with the Strong Maximum Principle.

\begin{proof}[{\bf Proof of Theorem \ref{ThmFlipElliptic}}]
Without loss of generality, assume that $x_{0}=0$. Recalling that 
\(
\bar{ \alpha}:=\frac{2+\mu}{1-m}\)
 and setting \(
s_j:=\|u\|_{L^\infty(B_{2^{-j}})},
\)
according to the ideas from \cite{CKS00}, we first show that there exists a constant $\overline{\mathrm C}>0$ such that
\begin{equation}\label{flip-iterative}
s_{j+1}
\leq
\max\Big\{
\overline{\mathrm C}\,2^{-\bar\alpha(j+1)},
\,2^{-\bar\alpha}s_j
\Big\}.
\end{equation}

Suppose by contradiction that \eqref{flip-iterative} fails. Then, for every
$k\in\mathbb N$, there exists $j_k\in\mathbb N$ satisfying
\begin{equation}\label{flip-contradiction}
s_{j_k+1}
>
\max\Big\{
k\,2^{-\bar\alpha(j_k+1)},
\,2^{-\bar\alpha}s_{j_k}
\Big\}.
\end{equation}
Defining the blow-up sequence
\[
u_k(x)
:=
\frac{u(2^{-j_k}x)}
{s_{j_k+1}}
\qquad \text{in } B_1,
\]
since $0\in \Gamma_2(u)$, we have
\(
u_k(0)=0.
\)
Moreover,
\(
\|u_k\|_{L^\infty(B_{1/2})}=1,
\)
and, by \eqref{flip-contradiction},
\[
\|u_k\|_{L^\infty(B_1)}
=
\frac{s_{j_k}}{s_{j_k+1}}
\leq 2^{\bar\alpha}.
\]
A direct computation shows that $u_k$ satisfies, in the viscosity sense, the following equation
\[
F_k(x,D^2u_k)
+
\langle \mathcal{B}_k(x),Du_k\rangle
+
\varrho_k(x)|Du_k|^\sigma
=
f_k(x,u_k)
\qquad \text{in } B_1,
\]
where
\[
F_k(x,\mathrm{X})
:=
\frac{2^{-2j_k}}{s_{j_k+1}}
F\!\left(
2^{-j_k}x,
\frac{s_{j_k+1}}{2^{-2j_k}}\mathrm{X}
\right),
\quad 
\mathcal{B}_k(x)
:=
2^{-j_k}\mathcal{B}(2^{-j_k}x),
\]
\[
\varrho_k(x)
:=
\frac{2^{-j_k(2-\sigma)}}{s_{j_k+1}^{1-\sigma}}
\varrho(2^{-j_k}x),
\mbox{
and
}
f_k(x,u_k)
:=
\mathfrak g(2^{-j_k}x)
\frac{2^{-2j_k}}{s_{j_k+1}^{\,1-m}}
\Big((u_k^+)^m-(u_k^-)^m\Big).
\]
Using \eqref{flip-contradiction}, we obtain
\(
s_{j_k+1}
>
    k\,2^{-\frac{2+\mu}{1-m}(j_k+1)},
\)
which combined with the assumption on $\mathfrak{g}$ yields
\[
|f_k(x,u_k)|\leq \mathrm{C}  \frac{2^{-(2+\mu)j_k}}{s_{j_k+1}^{\,1-m}}
\leq
\frac{\mathrm{C}}{k^{\,1-m}}
\to 0, \ \mbox{ as } k \to \infty.
\]
Hence,
\(
\|f_k\|_{L^\infty(B_1)}
\to 0.
\)
Furthermore,
\(
\|\mathcal{B}_k\|_{L^\infty(B_1)}
\leq
2^{-j_k}\|\mathcal{B}\|_{L^\infty(B_{1})}
\to 0.
\)

Next, using again \eqref{flip-contradiction}, we deduce by $\sigma\in\left(\frac{2m+\mu}{1+\mu+m},1\right)$ that
\[
|\varrho_k(x)|
\leq \frac{2^{\bar{\alpha}(1-\sigma)}2^{-j_{k}(2-\sigma-\bar{\alpha}(1-\sigma))}}{k^{1-\sigma}}\|\varrho\|_{L^{\infty}(B_{1})}\leq 8\|\varrho\|_{L^{\infty}(B_{1})}
\,2^{-j_k[2-\sigma-\bar\alpha(1-\sigma)]},
\]
provided that $\bar{\alpha}\leq 3$. Since $m\in (0,1)$ and the definition of $\bar{\alpha}$ guarantee that
\[
2-\sigma-\bar\alpha(1-\sigma)
=
\frac{(2-\sigma)(1-m)-(2+\mu)(1-\sigma)}{1-m}=\frac{(1+\mu+m)\sigma-(2m+\mu)}{1-m},
\]
the assumption $\sigma>\frac{2m+\mu}{1+\mu+m}$ is equivalent to
\(
(1+\mu+m)\sigma-(2m+\mu)>0,
\)
and
implies
\(
\|\varrho_k\|_{L^\infty(B_1)}
\to 0.
\)

The structural conditions on the coefficients yield
\(
F_k(x, \mathrm{X}) \to \text{Tr}(\mathfrak{A}(0)\mathrm{X})
\)
locally uniformly, due to assumption $(\mathrm{A}_4)$.
By the Arzelà-Ascoli theorem,
up to a subsequence,
\[
u_k\to u_\infty
\qquad\text{locally uniformly in }B_{1}.
\]
Since  \(\mathcal{B}_k\to0\) and \(f_k\to0\)
locally uniformly, the stability theorem for viscosity
solutions, see \cite[Section 6]{CIL92}, implies that
\(u_\infty\)  solves
\begin{equation}\label{limit-equation-elliptic}
\text{Tr}(\mathfrak{A}(0)D^2u_\infty)=0
\qquad \text{in } B_{3/4}.
\end{equation}

Moreover,
\begin{equation}\label{limit-properties}
u_\infty(0)=0,
\qquad
\|u_\infty\|_{L^\infty(B_{1/2})}=1.
\end{equation}

Now, let $k$ be sufficiently large so that $B_{2^{-j_k}}\subset B_{r_0}$. By the hypothesis on the positive phase,
\[
0\leq u_k^{+}(x)
=
\frac{u^{+}(2^{-j_k}x)}
{s_{j_k+1}}
\leq
\frac{\mathrm C_0\,2^{-\bar\alpha j_k}}
{s_{j_k+1}}
\leq
\frac{\mathrm{C}}{k},
\]
where the last inequality follows from \eqref{flip-contradiction}.
Passing to the limit, we obtain that the positive phase becomes asymptotically flat, namely,
\(
u_\infty^{+}\equiv 0.
\)
Hence
\(
u_\infty\leq 0
\ \text{in } B_{3/4}.
\)
Since $u_\infty(0)=0$, the origin is a maximum point of $u_\infty$. By applying the Strong Maximum Principle, see
 \cite[Theorem 3.6]{CC95}, to the equation \eqref{limit-equation-elliptic}, we conclude that $u_\infty$ is constant, and hence,
$u_\infty\equiv 0
\ \ \text{in } B_{3/4}.$
This contradicts the normalization of the blow-up sequence in \eqref{limit-properties}. Therefore,
\eqref{flip-iterative} holds.

Let $j_0\in\mathbb N$ be the smallest integer such that
\(
2^{-j_0}\leq r_0.
\)
Iterating \eqref{flip-iterative}, we obtain
\(
s_j
\leq
\mathrm C\,2^{-\bar\alpha j}
\ \text{for every } j\geq j_0
\)
for some constant $\mathrm C>0$.
Fix $r\in(0,r_0]$ and choose $j\geq j_0$ such that
\(
2^{-(j+1)}
\leq r
\leq 2^{-j}.
\)
Then,
\[
\sup_{B_r}u^{-}
\leq
\|u\|_{L^\infty(B_r)}
+
\mathrm C_0 r^{\bar\alpha}
\leq
s_j+\mathrm C_0 r^{\bar\alpha}.
\]
Using the estimate above,
\[
\sup_{B_r}u^{-}
\leq
\mathrm C\,2^{-\bar\alpha j}
+
\mathrm C_0 r^{\bar\alpha}
\leq
\big(2^{\bar\alpha} \mathrm C+\mathrm C_0\big)r^{\bar\alpha}.
\]
Setting
\(
\mathrm C_1:=2^{\bar\alpha}\mathrm C+\mathrm C_0,
\)
we conclude that
\[
\sup_{B_r}u^{-}
\leq
\mathrm C_1 r^{\bar\alpha}
\qquad \text{for every } r\in(0,r_0].
\]
The proof in the opposite case follows analogously by considering the negative phase and applying the Strong Maximum Principle to the limiting profile. This completes the proof.
\end{proof}

\subsection{Non-degeneracy of solutions}

By constructing a suitable barrier function and applying the Comparison Principle, we finally establish the non-degeneracy of solutions in each phase at higher-order singular nodal points.

\begin{proof}[{\bf Proof of Theorem \ref{Non-degeneracy of solutions}}] 
First, we  prove the estimate involving the supremum. Fix $z \in \{u > 0\}\cap\mathfrak{g}^{-1}(\{0\})$ and choose $r > 0$ such that $B_r(z) \subset B_1$. Without loss of generality, assume that $z=0$. Consider the following auxiliary (barrier) function
\[
\varphi(x) = \kappa |x|^{\bar{\alpha}}, \quad x \in B_r \cap \{u > 0\},
\]
where \(\bar{\alpha} = \frac{2+\mu}{1-m}\). By assumption $\rm (A_2)$, in this case $0\in\mathfrak{g}^{-1}(\{0\})$, then  $0\in F$, and it follows that $\mathfrak{g}(x)\geq \mathrm{c}_{0}|x|^{\mu}$ for every $x\in B_{r}\cap\{u>0\}$). Thus, we can choose \(\kappa > 0\)  so that
\begin{equation}\label{thesisoftheTheorem1.6}
F(x,D^{2}\varphi) + \mathscr{H}(x,D \varphi) - \mathrm{c}_{0} |x|^{\mu} (\varphi^{+}(x))^{m} < 0 
\leq F(x,D^{2} u) + \mathscr{H}(x,D u) - \mathrm{c}_{0} |x|^{\mu} (u^{+}(x))^{m}
\end{equation}
in \(B_r \cap \{u>0\}\). For this, we observe that
\begin{itemize}
\item[] \(D\varphi(x)=\kappa\bar{\alpha}|x|^{\bar{\alpha}-2}x\),
\quad \(D^{2}\varphi(x)=\kappa\bar{\alpha}|x|^{\bar{\alpha}-2}\left(\mathrm{Id}_{n}+(\bar{\alpha}-2)\frac{x}{|x|}\otimes \frac{x}{|x|}\right)\),
\item[] \qquad \qquad \qquad and  \(\mathcal{M}_{\lambda,\Lambda}^{+}(D^2\varphi(x))=\Lambda\kappa\bar{\alpha}(n+\bar{\alpha}-2)|x|^{\bar{\alpha}-2}\).
\end{itemize}
Consequently, we have the following estimate for the left-hand side of \eqref{thesisoftheTheorem1.6}
\begin{align*} 
F(x, D^{2}\varphi)&+ \mathscr{H}(x,D \varphi) - \mathrm{c}_{0} |x|^{\mu} \varphi_{+}^{m}(x)
\leq\mathcal{M}_{\lambda,\Lambda}^{+}(D^{2}\varphi)+\|\mathcal{B}\|_{L^{\infty}(B_1;\mathbb{R}^{n})}|D\varphi|\\
&+\|\varrho\|_{L^{\infty}(B_1)}|D\varphi|^{\sigma}-\mathrm{c}_{0}|x|^\mu\varphi_{+}^{m}(x)\\
&\leq\kappa\bar{\alpha}\Lambda(n+(\bar{\alpha}-2))|x|^{\bar{\alpha}-2}+\kappa\bar{\alpha}\|\mathcal{B}\|_{L^{\infty}(B_1;\mathbb{R}^{n})}|x|^{\bar{\alpha}-1}\\ 
&+\|\varrho\|_{L^{\infty}(B_1)}(\kappa\bar{\alpha})^{\sigma}|x|^{\sigma(\bar{\alpha}-1)}-\mathrm{c}_{0}\kappa^{m}|x|^{\mu +\bar{\alpha}m}\\ 
&\leq \Big[\kappa\bar{\alpha}\left[\Lambda(n+\bar{\alpha}-2) + \|\mathcal{B}\|_{L^{\infty}(B_1;\mathbb{R}^{n})}+\|\varrho\|_{L^{\infty}(B_1)}\bar{\alpha}^{\sigma-1}\right]-\mathrm{c}_{0}\kappa^{m}\Big]|x|^{\mu+m\bar{\alpha}}, 
\end{align*}
This expression is negative provided that
\[
\kappa\bar{\alpha}\Big(\Lambda(n+\bar{\alpha}-2) 
+ \|\mathcal{B}\|_{L^{\infty}(B_1;\mathbb{R}^{n})}
+ \|\varrho\|_{L^{\infty}(B_1)}\bar{\alpha}^{\sigma-1}\Big)
<\mathrm{c}_{0}\kappa^{m}.
\]
Therefore, it is sufficient to choose  \(\kappa\) such that
\begin{equation*}
\kappa^{1-m}
<\frac{\mathrm{c}_{0}}{\bar{\alpha}\left( \Lambda(n +(\bar{\alpha} - 2) 
+ \|\mathcal{B}\|_{L^{\infty}(B_1; \mathbb{R}^{n})} 
+ \bar{\alpha}^{\sigma - 1} \|\varrho\|_{L^{\infty}(B_1)} \right)}.
\end{equation*}
Equivalently, choosing \(\kappa>0\) such that
\begin{equation}\label{kappachoice}
\kappa < \min\left\{ 1, \frac{\mathrm{c}_{0}^{\frac{1}{1-m}}}{\bar{\alpha}^{\frac{1}{1-m}} 
\left[ \Lambda(n + \bar{\alpha} - 2) 
+ \|\mathcal{B}\|_{L^{\infty}(B_1; \mathbb{R}^{n})} 
+ \bar{\alpha}^{\sigma - 1} \|\varrho\|_{L^{\infty}(B_1)} \right]^{\frac{1}{1-m}}} \right\},
\end{equation}
we obtain
\[
F(x,D^{2}\varphi) + \mathscr{H}(x,D \varphi) - \mathrm{c}_{0} |x|^{\mu} \varphi_{+}^{m}(x)<0.
\]

 Under these conditions, we claim that there exists \(y \in \partial(B_r \cap \{u>0\})\) such that \(u(y) > \varphi(y)\). Indeed, if
\[
\partial(B_r \cap \{u>0\})\cap\{ u(x) > \varphi(x)\}=\emptyset,
\]
then,  \eqref{thesisoftheTheorem1.6} allows us to conclude, by the Comparison Principle \ref{ComPri} for $\mathfrak{c}(x)=-\mathrm{c}_{0}|x|^{\mu}$ and $\mathcal{F}(t)=(t^{+})^{m}$, that \(u\leq \varphi\) in \(B_r \cap \{u>0\}\). However, \(u(0) > 0 = \varphi(0)\), which is a contradiction. 
Therefore, there exists
\[
y \in \partial(B_r \cap \{u>0\})\cap \{u>\varphi\},
\]
that is, \(u(y) > \varphi(y)\). As \(u \leq \varphi\) on \(\partial\{u>0\} \cap B_r\), we must have \(y \in \partial B_r \cap \{u>0\}\). Therefore,
\[
\sup_{\partial B_r \cap \{u>0\}} u(x) \,\geq\,u(y) \,>\, \varphi(y) \,=\, \kappa r^{\bar{\alpha}}.
\]
For $x_{0}\in \Gamma_2(u)\cap \mathfrak{g}^{-1}(\{0\})$, take a sequence \(z_{j} \to x_0\) in \(\{u>0\} \cap B_{1/2}(x_0)\cap \mathfrak{g}^{-1}(\{0\})\) and pass to the limit by continuity.

Finally, we observe that the estimate involving the infimum follows by the same argument applied in the negative phase. More precisely, let $z\in\{u<0\}\cap \mathfrak{g}^{-1}(\{0\})$ and choose $r>0$ such that $B_r(z)\subset\{u<0\}$. Without loss of generality, we may assume that $z=0$, and consider the barrier function
\[
\psi(x)=-\kappa|x|^{\bar{\alpha}}, \qquad x\in B_r(z)\cap\{u<0\},
\]
for the same constant $\kappa>0$ such that \eqref{kappachoice} holds.
Proceeding exactly as above, it is possible to check that $\psi$ solves
\begin{equation*}
F(x,D^{2}\psi) + \mathscr{H}(x,D \psi) - \mathrm{c}_{0} |x|^{\mu}(- (\psi^{-}(x))^{m})>0  \quad \text{in}\quad B_{r}\cap \{u<0\}
\end{equation*}
and by the condition ${\rm (A_{2})}$, $u$ satisfies
\begin{equation*}
F(x,D^{2}u) + \mathscr{H}(x,D u) -\mathrm{c}_{0} |x|^{\mu}(- (u^{-}(x))^{m})\leq 0 \quad \text{in}\quad B_{r}\cap \{u<0\}.
\end{equation*}
Again, we may apply the Comparison Principle \ref{ComPri}  with $\mathfrak{c}(x)=-\mathrm{c}_{0}|x|^{\mu}$ and $\mathcal{F}(t)=-(t^{-})^{m}$ to conclude that there exists a point $y\in \partial(B_{r}\cap\{u<0\})$ such that $u(y)<\psi(y)$. Since $u= 0\geq \psi$ on $\partial\{u<0\}\cap B_{r}$, it follows that $y\in \partial B_{r}\cap\{u<0\}$. Therefore,
\[
\inf_{\partial B_{r}\cap\{u<0\}}u
\leq u(y)
<\psi(y)
=-\kappa r^{\bar{\alpha}}.
\]
For $x_{0}\in \Gamma_2(u)\cap \mathfrak{g}^{-1}(\{0\})$, let $\{z_{j}\}\subset\{u<0\}\cap B_{1/2}(x_{0})\cap \mathfrak{g}^{-1}(\{0\})$ be a sequence such that $z_{j}\to x_0$. Passing to the limit by continuity, we conclude that
\[
\inf_{B_{r}(x_0)\cap\{u<0\}}u(x)\leq -\kappa r^{\bar{\alpha}},
\]
which completes the proof.

\end{proof}

\subsection*{Data Availability Statement} \small Data sharing is not applicable to this article as no datasets were generated or analyzed
	during the current study.

\subsection*{Conflict of Interest Declaration}\small The authors declare that they have no conflict of interest.

\subsection*{AI disclosure} \small
In preparing this paper, the authors used the free version of ChatGPT solely for reviewing, improving, and editing the manuscript. ChatGPT was not used in developing the structure of the paper, including the Introduction, or in formulating, developing, or validating the mathematical arguments and proofs. All mathematical content, arguments, proofs, and conclusions were conceived, developed, and independently verified by the authors.

\subsection*{Acknowledgments}

J. da Silva Bessa has been supported by FAPESP-Brazil under Grant No. 2023/18447-3.
J.V. da Silva has received partial support from CNPq-Brazil under Grant No. 307131/2022-0,  Chamada CNPq/MCTI Nº 10/2023 - Faixa B - Grupos Consolidados under Grant 420014/2023-3, and by  FAPESP-Brazil under the Grant No.  2025/09344-1-Special Programs-Special Projects-First Projects-Call for Proposals (2025)-1st Cycle. 
M. Soares has been supported by CNPq-Brazil under the Grants No. 303.154/2025-0 and  No. 152.786/2025-2. Y. Hu is deeply grateful to FAPESP for funding her doctoral scholarship during the conduct of this research (Grant 2026/03110-1).

{\small{

}}
\bigskip

	\noindent\textsc{Junior da Silva Bessa}\\
	Departamento de Química\\
	Faculdade de Educação de Itapipoca - FACEDI\\
	Universidade Estadual do Ceará - UECE\\
	Rua da Universidade, s/n°, CEP 62505-090, Itapipoca, CE, Brazil\\
	\noindent \texttt{silva.bessa@uece.br}
	\bigskip
    
	\noindent\textsc{Jo\~{a}o Vitor da Silva and Yuwei Hu}\\
	Departamento de Matem\'{a}tica\\
	Instituto de Matem\'{a}tica, Estat\'{i}stica e Computa\c{c}\~{a}o Cient\'{i}fica - IMECC\\
	Universidade Estadual de Campinas - Unicamp\\
	Rua S\'{e}rgio Buarque de Holanda, 651, CEP 13083-859, Campinas, SP, Brazil\\
	\noindent \texttt{jdasilva@unicamp.br} and \texttt{y290757@dac.unicamp.br}

	\bigskip
	
	\noindent\textsc{Mayra Soares}\\
	Departamento de Matem\'{a}tica,\\ Universidade de Bras\'{i}lia - UnB\\
	Instituto Central de Ci\^{e}ncias, Campus Darcy Ribeiro,\\
	70910-900, Asa Norte, Bras\'{i}lia, Distrito Federal, Brasil\\
	\noindent\texttt{mayra.soares@unb.br}
	
\end{document}